\newtheorem{theorem}{Theorem}[section]
\newtheorem{proposition}[theorem]{Proposition}
\newtheorem{lemma}[theorem]{Lemma}
\theoremstyle{definition}
\newtheorem{definition}[theorem]{Definition}
\theoremstyle{remark}
\newtheorem{remark}[theorem]{\bf Remark}
\numberwithin{equation}{section}
\DeclareMathOperator{\diss}{diss}
\DeclareMathOperator{\sym}{sym}
\newcommand{\field}[1]{\mathbb{#1}}
\newcommand{\N}{\field{N}}
\newcommand{\R}{\field{R}}
\def\bbA{{\mathbb A}} \def\bbB{{\mathbb B}} \def\bbC{{\mathbb C}}
\def\bbV{{\mathbb V}}  
\def\calA{{\mathcal A}}  
 \def\calE{{\mathcal E}} \def\calF{{\mathcal F}}
 \def\calH{{\mathcal H}} \def\calI{{\mathcal I}}
 \def\calQ{{\mathcal Q}} \def\calR{{\mathcal R}}
  \def\calU{{\mathcal U}}
\def\calV{{\mathcal V}}  \def\calX{{\mathcal X}}
 \def\calZ{{\mathcal Z}}
\def\rmD{{\mathrm D}}
\newcommand{\wt}{\widetilde}
 \DeclareMathOperator{\dist}{dist}
\DeclareMathOperator{\Argmin}{Argmin}
\DeclareMathOperator{\argmin}{argmin}
\DeclareMathOperator{\Lin}{Lin}
 \renewcommand{\d}{{\mathrm d}}
 \newcommand{\dx}{\,\d x}
 \newcommand{\ds}{\,\d s}
 \newcommand{\dt}{\,\d t}
 \newcommand{\dr}{\,\d r}
\newcommand{\abs}[1]{\left\lvert#1\right\rvert}      
 \newcommand{\norm}[1]{\left\lVert#1\right\rVert}
\newcommand{\bnorm}[1]{\bigl\Vert#1\bigr\Vert} 
\newcommand{\Set}[2]{\{\,#1\,;\,#2\,\}}
\begin{document}

\title[ ]{Convergence analysis of  time-discretisation schemes  for 
rate-independent systems}

\author{Dorothee Knees}
\address{Dorothee Knees, Institute of Mathematics, University of Kassel,
Heinrich-Plett Str.~40, 34132 Kassel, Germany. Phone:
+49 0561 8044355}
\email{dknees@mathematik.uni-kassel.de}

\date{}

\begin{abstract}
It is well known that rate-independent systems involving nonconvex energy 
functionals in general do not allow for time-continuous solutions even if the 
given data are smooth. In the last years, several solution concepts were 
proposed that include discontinuities in the notion of solution, among them 
the class of global energetic solutions and the class of BV-solutions.  
In general, these solution concepts are not equivalent and numerical schemes 
are needed that reliably approximate that type of solutions one is interested 
in. In this paper we analyse the convergence of solutions  of three 
time-discretisation 
schemes, namely an approach based on local minimization, a penalized version of 
it and an alternate minimization scheme. For all three cases we show that under 
suitable conditions on the discretisation parameters discrete solutions 
converge to limit functions that belong to the class of BV-solutions. The 
proofs rely on a reparametrization argument. We illustrate the different 
schemes with a toy example. 
\end{abstract}

\keywords{rate-independent system, local minimization scheme; 
alternate minimization scheme; convergence analysis of time-discrete schemes; 
parametrised BV-solution}

\subjclass[2010]{%
49J27 
49J40 
35Q74 
65M12 
74C05 
74H15 
}

\maketitle


{\small \tableofcontents}
\section{Introduction}
\label{s:introduction}
In recent years, several discretisation schemes were proposed and applied 
in order to approximate solutions $z:[0,T]\to \calZ$  of doubly nonlinear 
differential inclusions of the type
\begin{align}
 \label{int:eq1}
 0\in \partial\calR(\partial_t z(t)) + \rmD_z\calI(t,z(t)),\quad z(0)=z_0,\,\, 
t\in [0,T]
\end{align}
for the rate-independent case. In this case, the functional 
$\calR:\calX\to [0,\infty)$ (with $\calZ\subset\calX$) is convex and positively 
homogeneous of degree one. 
A variety of material models for complex solids rely on evolution laws of the 
type \eqref{int:eq1}. There, $\calR$ describes a dissipation (pseudo) 
potential while $\calI:[0,T]\times \calZ\to\R$ is a time or load dependent 
stored energy functional. In many cases, the mapping $z\mapsto \calI(t,z)$ is 
not 
convex. It is a well known fact that \eqref{int:eq1} with a nonconvex (but 
smooth) functional $\calI$ in general does not allow for solutions that are 
continuous on the whole time interval $[0,T]$. Several 
different solution concepts that include 
discontinuities in the notion of solution were proposed in the literature. 
Here, we mention the concepts of (Global)  Energetic Solutions (GES), Balanced 
Viscosity and vanishing viscosity solutions (BV-solutions) and different types 
of local solutions.  
In the nonconvex case (i.e.\ $\calI$ is not convex), GES and BV-solutions 
imply  substantially different jump criteria and thus the solution concepts  
are 
not equivalent. It is a question of modeling to decide which type of solutions 
is most appropriate for a given problem. From a numerical point of view 
discretisation schemes are needed that approximate reliably the type of 
solutions one is interested in.

In literature, many different schemes are discussed for the approximation of 
\eqref{int:eq1}. For simplifying the following presentation we assume an 
equidistant partition of $[0,T]$ with $0=t_0^N<\ldots<t_N^N$, $N\in \N$, 
$\tau^N=T/N$, $t_k^N= k\tau^N$. Moreover, let $\calX$ be a Banach space and  
$\calZ,\calV$ Hilbert spaces such that $\calZ\Subset\calV\subset \calX$ with 
compact and continuous embeddings, respectively. 

Global energetic solutions $z:[0,T]\to\calZ$ are characterised by the 
following global stability condition (S) and energy balance (E): for every 
$t\in [0,T]$ 
\begin{align}
& \calI(t,z(t))\leq \calI(t,v) + \calR(v-z(t)) \quad \text{for all }v\in \calZ, 
\tag{S}\\
&\calI(t,z(t)) + \diss_\calR(z,[0,t]) =\calI(0,z_0) + 
\int_0^t\partial_t\calI(r,z(r))\dr\,. 
\tag{E}
\end{align}
Here, $\diss_\calR(z,[0,t]):=
\sup_{\substack{\text{partitions $(t_i)_i$ of }[0,t]}}\sum_{i=1}^N 
\calR(z(t_{i})- 
z(t_{i-1}))$ quantifies the dissipation with respect to 
$\calR$ along the curve $z$. GES 
can be approximated by a time  incremental global 
minimization scheme (we omit the index $N$):
\begin{align}
 \label{int:eq2}
 z(0)=z_0,\quad z_k\in \Argmin\Set{\calI(t_k,v) + \calR(v-z_{k-1})}{v\in 
\calZ}, \quad 1\leq k\leq N\,,
\end{align}
for details we refer to  \cite{Miesurvey05,MieRou15}.   

BV-solutions and vanishing viscosity solutions can be obtained starting from 
the viscously regularised minimization problem
\begin{align}
 \label{int:eq3}
 z^\mu(0):=z_0,\quad z^\mu_k\in \Argmin\Set{\calI(t_k,v) + \calR(v - 
z_{k-1}^\mu) + \tfrac{\mu}{2\tau}\norm{v-z_{k-1}}^2_\calV}{v\in \calZ}\,.
\end{align}
The parameter $\mu>0$ plays the role of a viscosity parameter and the choice 
of the norm in the quadratic term is a question of modeling. For $N\to 
\infty$, $\mu\to 0$ and $\mu/\tau\to \infty$, suitable interpolants of 
$(z_k^\mu)_{0\leq k\leq N}$ converge to vanishing viscosity solutions 
belonging to the  class of BV-solutions, 
see e.g.\ \cite[Theorem 3.12]{MRS16}. In Section \ref{suse:infiniteloc} we give 
 the 
definition of parametrised BV-solutions in the spirit of \cite[Definition 
3.8.2]{MieRou15}. This will be the framework we are working in. 
Let us remark that  for computations it is often difficult to find a 
good choice for  $\mu$ in dependence of $\tau$, see e.g.\ \cite{KS13}, where 
this approach is investigated analytically and computationally  for a crack 
propagation model. 

Several alternatives to \eqref{int:eq2} and \eqref{int:eq3} were proposed and 
applied in literature. 
However, a detailed convergence analysis is missing in 
many cases and it often  is even not clear which type of solutions might 
be approximated in the limit. In some cases it has been shown that the 
limit function is a  local solution which means that $z:[0,T]\to\calZ$ 
satisfies $0\in \partial\calR(0) + \rmD_z\calI(t,z(t))$  for 
almost every $t$ together with the energy dissipation 
estimate
\begin{align*}
 \calI(t_1,z(t_1)) + \diss_\calR(z;[t_0,t_1]) \leq \calI(t_0,z(t_0)) 
+\int_{t_0}^{t_1} \partial_t\calI(r,z(r))\dr
\end{align*}
that is valid for every $0\leq t_0<t_1\leq T$, see for instance 
\cite[Chapter 1.8]{MieRou15}.  The class of local solutions comprises both, GES 
and BV-solutions, and it is the most general and weakest notion of solutions 
for rate-independent systems of the type \eqref{int:eq1}.

In this paper, we focus on  three discretisation schemes: A local 
minimization approach originally proposed in \cite{MiEf06}, a relaxed version 
of it that is closely related to a scheme discussed in \cite{ACFS-M3AS17} and a 
modified alternating minimization scheme  including a penalty term. These 
schemes will be analysed in an abstract infinite dimensional framework for a 
semilinear model equation. This framework is general enough to be applied to 
basic  models from ferroelectrics, see Section 
\ref{suse:ferro}.   
However, for more complex models with stronger nonlinearities, like for 
instance damage models, the analysis has to be adapted accordingly.  
In all three cases it turns out that the limit functions belong to the class of 
BV-solutions and thus are different from global energetic solutions.

Let us discuss the results in more detail. The precise assumptions on the 
functionals $\calR$ and $\calI$ are collected in 
Section \ref{suse:infiniteloc}. 
  
\textbf{Local minimization:}  The following  scheme was first 
proposed in   \cite{MiEf06}. 
Let $h>0$. For $k\geq 1$, the quantities $z_k^h$ and $t_k^h$ are 
iteratively defined as 
\begin{align}
\label{int:eq4a}
 z_k^h&\in\Argmin\Set{\calI(t_{k-1}^h,v) + 
\calR(v-z_{k-1}^h)}{v\in \calZ,\, \norm{v-z_{k-1}^h}_\bbV\leq h}
\\
\label{int:eq4b}
t_k^h&= \min\left\{ t_{k-1}^h + h - \norm{z_{k}^h-z_{k-1}^h}_\bbV,\, 
T\right\}\,. 
\end{align}
Observe that the time increment is not fixed a priori but it is a result of the 
minimization procedure. Thus, the scheme  has a time adaptive 
character with finer time steps at those points where the solution might 
develop a discontinuity.  
It is proved in \cite{MiEf06} in finite dimensions that for $h\to 0$ suitable 
interpolants converge to (parametrised) BV-solutions. However, it is not shown 
that the desired final time $T$ is reached after a finite number of 
minimization 
steps and that the interpolating curves have finite arc-length. Thus in that 
paper it was not clarified whether in the limit $h\to 0$ the original problem 
\eqref{int:eq1} 
is solved on the whole time interval $[0,T]$. 
A version of this approach was investigated in \cite{Neg14} 
in the infinite dimensional setting. Also here, it was not clarified whether 
the time $T$ is reached after a finite number of steps. A further variant of 
\eqref{int:eq4a}--\eqref{int:eq4b} was investigated in \cite{NS16} in the 
context of a cohesive fracture model. 
However, in contrast to 
\eqref{int:eq4a}--\eqref{int:eq4b}  
the time increment in \cite{NS16} is fixed a priori and scaled with a 
(nonexplicit) constant $c$. Thus, this version does not have the time adaptive 
character of the original scheme \eqref{int:eq4a}--\eqref{int:eq4b}. 
In Section \ref{sec:locmin},  we provide a full convergence analysis for 
\eqref{int:eq4a}--\eqref{int:eq4b} in the infinite dimensional setting. In 
particular, we prove  that $T$ is reached after a finite number of steps and we 
derive a uniform (with respect to $h$) estimate for the piecewise constant 
interpolants of the incremental values  $(z_k^h)_k$ in $BV([0,T];\calZ)$, see 
Proposition \ref{prop.finitelength-inf}. This estimate allows us to apply a 
reparametrization technique and to identify a limit system that is satisfied by 
limits of the incremental solutions, see Theorem \ref{thm.Mie01}. It turns out 
that the limits are  parametrised BV-solutions.  

\textbf{Relaxed local minimization:} 
The scheme discussed in Section \ref{sec:solombrino} can be interpreted as a 
relaxed  version of \eqref{int:eq4a}--\eqref{int:eq4b}. 
Given $N\in \N$, a time-step size $\tau=T/N$ and a parameter $\eta>0$  we 
define 
for $1\leq k\leq N$ and $i\in \N_0$: 
$t_k=k\tau$, 
$z_{k,0}:=z_{k-1}$ and for $i\geq 1$
\begin{align}
 z_{k,i} &\in \Argmin\Set{\calI(t_k,v) 
+\frac{\eta}{2}\norm{v-z_{k,i-1}}_{\bbV}^2 + \calR(v-z_{k,i-1})}{v\in \calZ},
\label{int:eq5a}\\
z_k:=z_{k,\infty}&:=\lim_{i\to \infty} z_{k,i}\quad\text{(weak limit in 
$\calZ$)}.
\label{int:eq5b}
\end{align}
Here, the constraint $\norm{z_k^h-z_{k-1}^h}_\bbV\leq h$ from \eqref{int:eq4a} 
is replaced with the  term $\frac{\eta}{2}\norm{v-z_{k,i-1}}_\bbV^2$. 
The parameter $\eta$ plays the role of a penalty parameter that should be sent 
to infinity. It can also be interpreted as a generalized viscosity parameter, 
i.e.\ in comparison with \eqref{int:eq3} it plays the role of $\mu/\tau$. 
In Section \ref{sec:solombrino},  
we show the convergence of discrete 
solutions under the assumption $\eta_N\to\infty$ for $N\to\infty$ and obtain 
again BV-solutions in the limit. Observe that 
\eqref{int:eq5a}--\eqref{int:eq5b} is a modified version of an algorithm 
studied in \cite{ACFS-M3AS17}:  instead of the term 
$\calR(v-z_{k,i-1})$ the authors in \cite{ACFS-M3AS17} use the term $\calR(v - 
z_k)$ in \eqref{int:eq5a} and they study the convergence of the scheme for 
fixed $\eta>0$ and $N\to \infty$. For the version \eqref{int:eq5a} we  
show that the  sequence $(z_{k,i})_{i\in \N}$ itself converges  to a critical 
point (i.e.\ $-\rmD_z\calI(t_k,z_k)\in \partial\calR(0)$), while such a result 
is derived in   \cite{ACFS-M3AS17} for a subsequence, only. Moreover, the 
authors from \cite{ACFS-M3AS17} show that the limit function belongs to the 
class of local solutions, the weakest notion of solutions for 
rate-independent systems of the type \eqref{int:eq1}, while we are able to 
classify the limit function as a (parametrised) BV-solution. In order to have a 
closer comparison with the results from \cite{ACFS-M3AS17}, in Section 
\ref{suse:sol-fixed}, we also study the  limit behavior of 
\eqref{int:eq5a}--\eqref{int:eq5b} for $N\to \infty$ but with fixed $\eta>0$. 
In 
this case it is not clear whether the limits of piecewise constant and 
piecewise 
affine interpolating curves coincide. Hence, we characterise the limiting 
system 
by using the individual limits explicitly. In this way we  obtain an energy 
dissipation balance that slightly differs from the one for the original limit 
to \eqref{int:eq5a}--\eqref{int:eq5b}. In comparison to the energy dissipation 
estimate obtained in \cite{ACFS-M3AS17} it contains more information and the 
behavior at jump points can be characterised more precisely. In this context 
let us finally mention  \cite{MS16,RS17}. There,  
the authors consider  \eqref{int:eq5a} for fixed $\eta>0$ and at each $t_k$ 
they 
carry out only one minimization step (i.e.\ $z_k:= z_{k,1}$). They prove the 
convergence of suitable interpolants to so-called visco-energetic solutions. 
Depending on the size of $\eta$ these solutions may behave more like GES or 
like 
BV-solutions or they show some intermediate behavior.

\textbf{Alternate minimization with penalty term:} Finally, in Section 
\ref{suse:altcomplvisc}, we discuss an alternate minimization scheme with a 
penalty term and a stopping criterion. The underlying 
energy $\calE=\calE(t,u,z)$ contains an additional variable $u$ that in the 
context of material models  plays for instance the role of the displacement 
field. The scheme 
is  defined as follows: 
 Let 
$z_{k,0}:= z_{k-1}$, $u_{k,0}:= u_{k-1}$. Then for $i\geq 1$
\begin{align}
 u_{k,i}&= \argmin\Set{\calE(t_k,v,z_{k,i-1})}{v\in \calU},
 \label{int:eq6a}\\
 z_{k,i}&\in \Argmin\Set{\calE(t_k,u_{k,i},\xi) +\frac{\eta}{2}\norm{\xi - 
z_{k,i-1}}_\bbV^2 + \calR(\xi - z_{k,i-1})}{\xi \in \calZ},
\label{int:eq6b}\\
&\text{stop if } \norm{z_{k,i}-z_{k,i-1}}_\calV\leq \delta;\quad 
(u_k,z_k):= (u_{k,i},z_{k,i})\,.
\label{int:eq6c}
\end{align}
Again, we show that the criterion \eqref{int:eq6c} is satisfied after a finite 
number of minimization steps. We further prove that for $\eta_N\to\infty$, 
$\delta_N\to 0$ and $\eta_N\delta_N\to 0$ the  interpolants converge  to a 
(parametrised) solution triple $(\hat t,\hat u,\hat z):[0,S]\to 
[0,T]\times\calU\times \calZ$ with $\rmD_u\calE(\hat t(s),\hat u(s),\hat 
z(s))=0$ for every $s$ and $\hat z$ is again a BV-type solution. 
This result is different from the alternate minimization  scheme analysed in 
\cite{NegriKnees17} (for a damage model and with $\eta=0$), where 
in the limit also visco-elastic dissipation is present. By defining 
$\calI(t,z):=\min_{u\in \calU}\calE(t,u,z)$ we are back in the  setting 
of Section \ref{sec:solombrino} and hence the results of 
Section \ref{suse:altcomplvisc} can be interpreted as a convergence result for 
\eqref{int:eq5a} with an additional stopping criterion. 
Alternate minimization schemes are frequently applied in simulations since they 
split the problem into subproblems that usually are easier to solve. In the 
context of rate-independent systems we refer to 
\cite{BFM00,RTB15,NegriKnees17} for first results.

The key estimate for  all  convergence proofs is a  BV-type bound for 
the incremental solutions
\[
 \sum_{k=1}^N\sum_i\norm{z_{k,i} - z_{k,i-1}}_\calZ\leq C
\]
that is uniform with respect to the discretisation parameters. Estimates of 
this type lie at the heart of any vanishing viscosity result.  
After deriving this estimate for the different schemes we define 
interpolating curves $(t^N,z^N):[0,S_N]\to[0,T]\times\calZ$ by introducing an 
artificial arclength parameter and formulate discrete energy dissipation 
identities that are satisfied by these curves. Passing to the limit in these 
identities yields the desired results. This general approach is frequently 
applied in the context of vanishing viscosity analysis for rate-independent 
systems, see for example \cite{MiEf06,MRS09,MielkeZelik_ASNPCS14} for 
abstract settings and \cite{KneesRossiZanini} for a damage 
model. 

Finally, Section \ref{sec:examples} contains a  
finite dimensional example for which solutions can be constructed 
explicitly. It turns out that even within the same class of solutions 
(BV-solutions in this case) the limits related to the local minimization scheme 
and those related to the schemes with a penalization parameter may differ. 
We further illustrate the predictions of the different schemes with the help of 
a finite dimensional toy example for which exact solutions can be constructed 
explicitly. 
Finally, we 
 show that a (simplified) rate-independent version of the 
ferroelectric model 
introduced in \cite{SKTSSMG15} falls into the abstract framework of this 
paper. 
Thus the analysis in this paper in particular guarantees the convergence of 
the alternate minimization scheme \eqref{int:eq6a}--\eqref{int:eq6c} to  
solutions of BV-type for the ferroelectric model.

\subsection{Basic assumptions and estimates} 
\label{suse:infiniteloc}
The analysis will be carried out for the semilinear system introduced in 
\cite{MielkeZelik_ASNPCS14} and   \cite[Example 
3.8.4]{MieRou15}. Let $\calX$ be a Banach 
space and $\calZ,\calV$ separable Hilbert spaces that are densely and 
compactly resp.\ continuously embedded in the following way:
\begin{align}
\label{eq.Mief000}
 \calZ\Subset\calV\subset \calX.
\end{align}
Let further $A\in \Lin(\calZ,\calZ^*)$ and $\bbV\in \Lin(\calV,\calV^*)$ be 
linear symmetric, bounded  $\calZ$- and $\calV$-elliptic operators, i.e.\ there 
exist constants $\alpha,\gamma>0$ such that 
\begin{align}
\label{eq.Mief0001}
 \forall z\in \calZ, \forall v\in \calV:\quad 
 \langle Az,z\rangle\geq \alpha\norm{z}^2_\calZ\,,\quad 
\langle \bbV v,v\rangle\geq \gamma\norm{v}^2_\calV\,,
\end{align}
and $\langle Az_1,z_2\rangle =\langle A z_2,z_1\rangle$ for all $z_1,z_2\in 
\calZ$ (and similar for $\bbV$). 
Here, $\langle \cdot,\cdot\rangle$ denotes the duality pairings in $\calZ$ and 
$\calV$, respectively. We define $\norm{v}_\bbV:=\left(\langle \bbV 
v,v\rangle\right)^\frac{1}{2}$, which is a norm that is equivalent to the 
Hilbert space norm $\norm{\cdot}_\calV$. 
 By rescaling the inner product on $\calZ$ we may assume that 
$\norm{z}_\bbV\leq \norm{z}_\calZ$ for all $z\in \calZ$. 
Let further 
\begin{align}
\label{eq.Mief00}
 \ell\in C^1([0,T];\calV^*) \text{ and } \calF\in C^2(\calZ;\R)\text{  with } 
\calF\geq 0.
\end{align}
The energy functional $\calI$ is of the form 
\begin{align}
\label{eq.Mief0002}
 \calI:[0,T]\times\calZ \rightarrow \R,\quad \calI(t,z):=\frac{1}{2}\langle A 
z,z\rangle + 
\calF(z) - \langle \ell(t),z\rangle\,.
\end{align}
Clearly, $\calI\in C^1([0,T]\times\calZ;\R)$ and 
\begin{gather}
\exists \mu,c>0\,\forall 
t\in [0,T],\, z\in \calZ:\qquad
\abs{\partial_t\calI(t,z)}\leq \mu \big(\calI(t,z) + c\big).
\label{eq.Mief02}
\end{gather}
Referring to \cite[Section 2.1.1]{MieRou15}, 
these conditions imply that for all $t,s\in [0,T], z\in \calZ$ the estimates 
\begin{align}
 \calI(t,z) + c\leq (\calI(s,z) + c) e^{\mu\abs{t-s}},\qquad 
 \abs{\partial_t\calI(t,z)} \leq \mu(\calI(s,z) + c) e^{\mu\abs{t-s}}\,
\label{eq.Mief08} 
 \end{align}
 are valid. 
The dissipation functional  $\calR:\calX\to[0,\infty)$ is assumed to be  
convex, lower semicontinuous, positively homogeneous 
of degree one and 
\begin{align}
\label{eq.Mief100}
 \exists c,C>0\, \forall x\in \calX:\quad c\norm{x}_\calX\leq \calR(x)\leq 
C\norm{x}_\calX\,.
\end{align}
The functional $\calF$ shall 
play the role of a possibly nonconvex lower order term (cf.\ \cite[Section 
3.8]{MieRou15}). Hence, we assume that  
\begin{align}
\label{ass.F01}
 \rmD_z\calF\in C^1(\calZ;\calV^*),\quad \norm{\rmD^2_z\calF(z)v}_{\calV^*}\leq 
C(1 + \norm{z}_\calZ^q)\norm{v}_\calZ
\end{align}
for some $q\geq 1$. 
From \eqref{ass.F01} and \eqref{eq.Mief100} we deduce the following 
interpolation estimate: 
\begin{lemma}
 \label{lem.estDF}
 Assume \eqref{eq.Mief000}, \eqref{eq.Mief00}, \eqref{eq.Mief100} and 
\eqref{ass.F01}. For every 
$\rho>0$ and $\varepsilon>0$ there exists $C_{\rho,\varepsilon}>0$ such that 
for all $z_1,z_2\in \calZ$ with $\norm{z_i}_\calZ\leq \rho$ we have 
\begin{align}
 \label{est:DF}
 \abs{\langle \rmD\calF(z_1)-\rmD\calF(z_2),z_1 - z_2\rangle}\leq \varepsilon 
\norm{z_1-z_2}^2_\calZ + 
C_{\rho,\varepsilon}\min\{\calR(z_1-z_2),\calR(z_2 - z_1)\}\norm{z_1-z_2}_\bbV.
\end{align} 
\end{lemma}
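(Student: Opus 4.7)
The plan is to derive a pointwise upper bound on $\langle \rmD\calF(z_1) - \rmD\calF(z_2), z_1 - z_2\rangle$ from the $\calC^1$ regularity of $\rmD\calF$ and the growth hypothesis on $\rmD^2\calF$ in \eqref{ass.F01}, and then redistribute a mixed norm factor via Young's inequality combined with the Ehrling/Lions lemma (which is available thanks to the compact embedding $\calZ\Subset\calV$).

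First, since $\rmD\calF\in C^1(\calZ;\calV^*)$, I would write the fundamental-theorem-of-calculus identity
\begin{equation*}
\rmD\calF(z_1)-\rmD\calF(z_2) = \int_0^1\rmD^2\calF\bigl(z_2+\theta(z_1-z_2)\bigr)(z_1-z_2)\,\d\theta \in \calV^*.
\end{equation*}
Combining this with the bound $\norm{\rmD^2\calF(z)v}_{\calV^*}\leq C(1+\norm{z}_\calZ^q)\norm{v}_\calZ$ from \eqref{ass.F01} and the assumption $\norm{z_i}_\calZ\leq\rho$, I would derive
\begin{equation*}
\norm{\rmD\calF(z_1)-\rmD\calF(z_2)}_{\calV^*}\leq M_\rho\norm{z_1-z_2}_\calZ,\qquad M_\rho:=C(1+\rho^q).
\end{equation*}
Setting $w:=z_1-z_2\in\calZ\subset\calV$ and using the duality pairing between $\calV^*$ and $\calV$ yields
\begin{equation*}
\abs{\langle\rmD\calF(z_1)-\rmD\calF(z_2),w\rangle}\leq M_\rho\norm{w}_\calZ\norm{w}_\calV.
\end{equation*}

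Second, I would redistribute the factor $\norm{w}_\calZ\norm{w}_\calV$. A plain Young's inequality gives
\begin{equation*}
M_\rho\norm{w}_\calZ\norm{w}_\calV\leq\eps\norm{w}_\calZ^2+\frac{M_\rho^2}{4\eps}\norm{w}_\calV^2,
\end{equation*}
and the obstacle is now to convert the surplus $\norm{w}_\calV^2$ into the desired mixed term $\norm{w}_\calX\norm{w}_\calV$ without generating extra $\norm{w}_\calZ^2$. Here the compactness $\calZ\Subset\calV$ gives via the Ehrling lemma, for every $\delta>0$, a constant $C_\delta>0$ with $\norm{w}_\calV\leq\delta\norm{w}_\calZ+C_\delta\norm{w}_\calX$. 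Applying this to only one of the two factors in $\norm{w}_\calV^2$ leaves
\begin{equation*}
\norm{w}_\calV^2\leq\delta\norm{w}_\calZ\norm{w}_\calV+C_\delta\norm{w}_\calX\norm{w}_\calV,
\end{equation*}
which reintroduces the original quantity $M_\rho\norm{w}_\calZ\norm{w}_\calV$ on the right-hand side. Choosing $\delta$ sufficiently small (depending on $\rho$ and $\eps$) so that the reappearing term can be absorbed into the left-hand side of the combined inequality, I would obtain
\begin{equation*}
M_\rho\norm{w}_\calZ\norm{w}_\calV\leq 2\eps\norm{w}_\calZ^2+C_{\rho,\eps}'\norm{w}_\calX\norm{w}_\calV.
\end{equation*}

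Finally, I would invoke the norm equivalences $\norm{w}_\calV\leq c_1\norm{w}_\bbV$ (consequence of $\calV$-ellipticity of $\bbV$) and $\norm{w}_\calX\leq c_2^{-1}\calR(w)$ together with $\norm{w}_\calX=\norm{-w}_\calX\leq c_2^{-1}\calR(-w)$ from \eqref{eq.Mief100}, so that $\norm{w}_\calX\leq c_2^{-1}\min\{\calR(w),\calR(-w)\}$. Relabelling $2\eps$ as $\eps$ and absorbing the various constants into a single $C_{\rho,\eps}$ delivers the claimed estimate \eqref{est:DF}. The main obstacle is the absorption step in the middle paragraph, which forces $\delta$ to be tied to both $\rho$ (through $M_\rho$) and $\eps$, and explains why the final constant must depend on both parameters.
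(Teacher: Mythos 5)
Your proposal is correct and relies on exactly the same ingredients as the paper's proof: the $C^1$ estimate $\norm{\rmD\calF(z_1)-\rmD\calF(z_2)}_{\calV^*}\leq C(1+\rho^q)\norm{z_1-z_2}_\calZ$ from \eqref{ass.F01}, the Ehrling lemma (available by $\calZ\Subset\calV\subset\calX$), Young's inequality, and the conversion of $\norm{\cdot}_\calX$ into $\min\{\calR(\cdot),\calR(-\cdot)\}$ via \eqref{eq.Mief100}. The only difference is bookkeeping: the paper applies Ehrling to $\norm{z_1-z_2}_\calV$ first and then Young to the cross term, arriving directly at $\norm{z_1-z_2}_\calX^2$, whereas you apply Young first and thus need the extra absorption step to remove the reappearing $\norm{w}_\calZ\norm{w}_\calV$ term — that step is sound, but the paper's ordering avoids it entirely.
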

\begin{proof}
 The proof relies on an abstract Ehrling Lemma, \cite{wloka87}, which 
adapted to our 
situation reads:
For every $ \varepsilon>0$ there exists $C_\varepsilon>0$ such that for all 
$z\in \calZ$
\begin{align}
 \label{ehrling1}
 \norm{z}_\calV\leq \varepsilon\norm{z}_\calZ+ C_\varepsilon\norm{z}_\calX.
\end{align}
Let now $\varepsilon,\rho>0$, $z_i\in \calZ$ with 
$\norm{z_i}_\calZ\leq\rho$. Then for $\varepsilon_1:=\varepsilon/(2 C 
(1+\rho^q))$ and $\varepsilon_2=\varepsilon/2$
\begin{align*}
\abs{\langle \rmD\calF(z_1)-\rmD\calF(z_2),z_1 - z_2\rangle}
& \leq \norm{\rmD_z\calF(z_1) - \rmD_z\calF(z_2)}_{\calV^*}\norm{z_1-z_2}_\calV 
\\
&\leq C(1+\rho^q)\norm{z_1-z_2}_\calZ(\varepsilon_1 \norm{z_1-z_2}_\calZ + 
C_{\varepsilon_1}\norm{z_1-z_2}_\calX)
\\
&\leq \left(\frac{\varepsilon}{2}+\varepsilon_2\right)
\norm{z_1-z_2}^2_\calZ + 
C_{\varepsilon_2}(C_{\varepsilon_1}C(1+\rho^q))^2\norm{z_1-z_2}^2_\calX.
\end{align*}
The proof is complete since  by \eqref{eq.Mief100} and \eqref{eq.Mief000} 
 we have 
 $\norm{z_1-z_2}^2_\calX\leq 
C\min\{\calR(z_1-z_2),\calR(z_2 - z_1)\}\norm{z_1-z_2}_\bbV$. 
\end{proof}
For the proof  of the convergence theorems we need a further 
assumption on $\calF$: 
\begin{align}
\label{ass.fweakconv}
 \calF:\calZ\rightarrow\R \text{ and }\rmD_z\calF:\calZ\rightarrow\calZ^* 
\text{ are weak-weak continuous.}
\end{align}

Finally, we give here the definition of parametrised BV-solutions following 
\cite[Definition 3.8.2]{MieRou15}. 
\begin{definition}
\label{int:defBVsol}
A pair $(\hat t,\hat z):[0,S]\to[0,T]\times\calZ$ is a (normalized)  
$\calV$-parametrised solution associated with $(\calI,\calR,\bbV)$ if
$(\hat t,\hat z)\in W^{1,\infty}([0,S];\R\times \calV)$ and if 
there exists a measurable 
function $\lambda:[0,S]\to [0,\infty)$ such that for almost all $s\in [0,S]$ 
\begin{subequations}
\begin{gather}
\hat t(0)=0,\,\hat t(S)=T,\, \hat z(0)=z_0,\, \hat t'(s)\geq 0,\, \hat t'(s) + 
\norm{z'(s)}_\bbV = 1,\label{int:eq8a}\\
\lambda(s)\geq 0,\, \lambda(s) \hat t'(s)=0, \label{int:eq8b}\\
 0\in \partial\calR(\hat z'(s)) + \lambda(s)\bbV\hat z'(s) +\rmD_z\calI(\hat 
t(s),\hat z(s))\,.\label{int:eq8c}
\end{gather}
\end{subequations}
The pair $(\hat t,\hat z)$ is a degenerate $\calV$-parametrised solution 
associated with $(\calI,\calR,\bbV)$ if all of the above conditions but the 
last one in \eqref{int:eq8a}  are satisfied. 
\end{definition}
Normalized parametrised BV-solutions can equivalently be characterised by an 
energy dissipation identity. The proof of the next proposition is identical to 
the one of \cite[Corollary 5.4]{MielkeRossiSavare_COCV12}. 
\begin{proposition}
\label{int:propeq}
 Let the pair $(\hat t,\hat z)\in W^{1,\infty}([0,S];\R\times\calZ)$ satisfy 
\eqref{int:eq8a}. Then it is a $\calV$-parametrised solution associated with 
$(\calI,\calR,\bbV)$ (i.e.\ there exists a function 
$\lambda:[0,S]\to[0,\infty)$ such that $(\lambda,\hat t,\hat z)$  satisfies 
\eqref{int:eq8b}--\eqref{int:eq8c}) if and only if the following 
complementarity relation and energy dissipation identity are satisfied: 
\begin{align}
& \text{for almost all $s\in [0,S]$:}\qquad \hat 
t'(s)\dist_{\calV^*}(-\rmD_z\calI(\hat t(s),\hat z(s)),\partial\calR(0))=0,
\\
&\text{for all $s\in [0,S]$:} \quad 
\calI(\hat t(s),\hat z(s)) +\int_0^s\calR(\hat z'(r))
+ \norm{\hat z'(r)}_\bbV
\dist_{\calV^*}(-\rmD_z\calI(\hat t(r),\hat z(r)),\partial\calR(0))\dr 
\nonumber \\
&\qquad\qquad\qquad\qquad\qquad\qquad\qquad\qquad\qquad
=\calI(0,z_0) + \int_0^s\partial_t\calI(\hat t(r),\hat z(r))
t'(r)\dr\,.
\end{align}
\end{proposition}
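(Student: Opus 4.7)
My plan is to deduce the equivalence from the chain rule for $s \mapsto \calI(\hat t(s), \hat z(s))$, which is justified by $\calI \in C^1$ and $(\hat t, \hat z) \in W^{1,\infty}([0,S]; \R \times \calZ)$, together with the generalised Young-Fenchel inequality induced by the positive $1$-homogeneity of $\calR$. Using $\partial\calR(0) = \{\eta \in \calV^* : \langle \eta, v\rangle \leq \calR(v) \text{ for all } v \in \calV\}$, the decomposition $w = \eta + (w-\eta)$ with $\eta \in \partial\calR(0)$ combined with Cauchy-Schwarz in the $\bbV$-duality yields, for every $w \in \calV^*$ and $v \in \calV$,
\begin{align*}
\langle w, v\rangle \leq \calR(v) + \norm{v}_\bbV \dist_{\calV^*}(w, \partial\calR(0)),
\end{align*}
where the distance is measured in the dual norm associated with $\bbV$; equality is attained if and only if $w - \eta^* = \lambda \bbV v$ for some $\eta^* \in \partial\calR(v)$ and some $\lambda \geq 0$.

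\textbf{Forward direction.} Assume that $(\lambda, \hat t, \hat z)$ satisfies \eqref{int:eq8b}--\eqref{int:eq8c}. The $1$-homogeneity of $\calR$ gives $\xi := -\rmD_z\calI(\hat t, \hat z) - \lambda \bbV \hat z' \in \partial\calR(\hat z') \subset \partial\calR(0)$, whence on the one hand $\dist_{\calV^*}(-\rmD_z\calI, \partial\calR(0)) \leq \lambda \norm{\hat z'}_\bbV$, and on the other hand the Fenchel identity $\langle \xi, \hat z'\rangle = \calR(\hat z')$ produces $\langle -\rmD_z\calI, \hat z'\rangle = \calR(\hat z') + \lambda \norm{\hat z'}_\bbV^2$. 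Inserting this into the Young inequality forces the equality $\lambda \norm{\hat z'}_\bbV = \dist_{\calV^*}(-\rmD_z\calI, \partial\calR(0))$ almost everywhere. The complementarity relation now follows from \eqref{int:eq8b}: on the set where $\hat t'(s) > 0$ one has $\lambda(s) = 0$, so the distance vanishes. Inserting the pointwise identity into the chain rule $\tfrac{d}{ds}\calI(\hat t, \hat z) = \partial_t \calI\, \hat t' + \langle \rmD_z \calI, \hat z'\rangle$ and integrating yields the claimed energy-dissipation identity.

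\textbf{Backward direction.} Starting from the assumed energy-dissipation identity and applying the chain rule one obtains, after subtraction,
\begin{align*}
\int_0^s \Big[\calR(\hat z'(r)) + \norm{\hat z'(r)}_\bbV \dist_{\calV^*}(-\rmD_z\calI(\hat t(r), \hat z(r)), \partial\calR(0)) - \langle -\rmD_z\calI(\hat t(r), \hat z(r)), \hat z'(r)\rangle\Big] \dr = 0.
\end{align*}
Since the integrand is non-negative a.e.\ by the Young inequality, it vanishes a.e. On $\{\hat z' \neq 0\}$ the equality case of the Young inequality furnishes a unique $\lambda(s) \geq 0$ with $-\rmD_z\calI - \lambda \bbV \hat z' \in \partial\calR(\hat z')$, namely $\lambda(s) = \dist_{\calV^*}(-\rmD_z\calI, \partial\calR(0)) / \norm{\hat z'(s)}_\bbV$; set $\lambda(s) := 0$ elsewhere. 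Then \eqref{int:eq8c} holds pointwise a.e., $\lambda$ is measurable by construction, and the complementarity together with the normalisation $\hat t' + \norm{\hat z'}_\bbV = 1$ ensures $\lambda(s)\hat t'(s) = 0$ (the only potentially problematic case $\hat z' = 0$, $\hat t' > 0$ is ruled out by the complementarity, which forces the distance, and hence $\lambda$, to vanish there).

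\textbf{Main obstacle.} The technical crux is the equality-case characterisation of the Young inequality, i.e.\ that equality enforces the specific decomposition $w - \eta^* = \lambda \bbV v$ with $\eta^* \in \partial\calR(v)$ and $\lambda \geq 0$. This is handled exactly as in \cite[Corollary 5.4]{MielkeRossiSavare_COCV12} via convex duality in the Hilbert triple $(\calV, \bbV)$. A second point to be careful about is the compatibility of the norm on $\calV^*$ used in the definition of $\dist_{\calV^*}$ with the $\bbV$-induced Hilbert duality, which is what makes the Young and Fenchel identities match up without spurious constants.
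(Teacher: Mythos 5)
Your proof is correct and follows the same route the paper implicitly takes: the paper itself gives no argument and merely cites \cite[Corollary 5.4]{MielkeRossiSavare_COCV12}, and what you have written out is exactly the chain rule plus the generalized Young--Fenchel inequality (with its equality case) that underlies that corollary. In particular your treatment of the set $\{\hat z'=0\}$ in the backward direction — using the normalization $\hat t' + \norm{\hat z'}_\bbV = 1$ to force $\hat t'>0$ there, hence $\dist_{\calV^*}(-\rmD_z\calI,\partial\calR(0))=0$ by complementarity — correctly closes the one place where the pointwise Young-equality argument alone gives no information.
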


\section{An approximation  scheme relying on local minimization}
\label{sec:locmin}
In this section we analyse the scheme proposed in \cite{MiEf06} for 
approximating solutions to the rate-independent model \eqref{int:eq1}. 
It was already shown in \cite{MiEf06} (for the finite dimensional case) that 
suitable interpolants generated by 
this scheme converge to solutions that belong to the class of 
BV-solutions. However, in \cite{MiEf06} it is not shown that a finite 
number of 
minimization steps are sufficient to reach the desired final time $T$, and that 
the interpolating curves have a finite length that is uniformly bounded with 
respect to the discretisation parameter.  
The aim of this section is to fill this gap for the infinite 
dimensional 
setting introduced in the previous section, see Proposition 
\ref{prop.finitelength-inf} ahead.

Let us describe the local minimization algorithm from \cite{MiEf06}. 
Fix $h>0$. 
 Given initial values $t_0=0$ 
and 
$z_0\in \calZ$, for $k\geq 1$ the quantities $z_k^h$ and $t_k^h$ are 
iteratively defined as 
\begin{align}
\label{eq.Mief109}
 z_k^h&\in\Argmin\Set{\calI(t_{k-1}^h,z) + 
\calR(z-z_{k-1}^h)}{z\in \calZ,\, \norm{z-z_{k-1}^h}_\bbV\leq h}
\\
\label{eq.Mief110}
t_k^h&= \min\left\{ t_{k-1}^h + h - \norm{z_{k}^h-z_{k-1}^h}_\bbV,\, 
T\right\}\,. 
\end{align}
The existence of minimizers follows by the direct method in the calculus of 
variations.

\begin{proposition}[Basic estimates]
\label{prop.basicest} 
Under the above assumptions on $\calI$ and $\calR$,  for all $h>0$, 
$k\in \N$ and with $c,\mu$ from \eqref{eq.Mief02} we have 
\begin{align}
\calI(t_k^h,z_k^h) + \calR(z_k^h - z_{k-1}^h)&\leq \calI(t_{k-1}^h, z_{k-1}^h) 
+ \int_{t_{k-1}^h}^{t_k^h} \partial_t\calI(\tau,z_k^h)\d \tau \, ,
\label{eq.Mief104}\\
 \calI(t^h_{k},z^h_k) + \sum_{i=1}^k \calR(z^h_k - z^h_{k-1}) 
 &\leq (c + \calI(0,z_0))e^{\mu T}\, , 
 \label{eq.Mief105}\\
\sup_{h>0,k\in \N}\norm{z^h_k}_\calZ&<\infty\,.
\label{eq.Mief106}
\end{align} 
\end{proposition}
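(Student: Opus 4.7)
The plan is to prove the three estimates in sequence, bootstrapping from the minimality inequality \eqref{eq.Mief109} via the time-continuity bound \eqref{eq.Mief08}, and then extracting the uniform $\calZ$-bound from the coercivity \eqref{eq.Mief0001}.

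\textbf{Step 1: the one-step energy inequality \eqref{eq.Mief104}.} Since $z_{k-1}^h$ is itself admissible in \eqref{eq.Mief109} (the constraint $\norm{v-z_{k-1}^h}_\bbV \leq h$ is satisfied with equality zero), minimality gives
\[
\calI(t_{k-1}^h, z_k^h) + \calR(z_k^h - z_{k-1}^h) \leq \calI(t_{k-1}^h, z_{k-1}^h).
\]
Now write $\calI(t_k^h, z_k^h) = \calI(t_{k-1}^h, z_k^h) + \int_{t_{k-1}^h}^{t_k^h}\partial_t\calI(\tau, z_k^h)\,\d\tau$ by the fundamental theorem of calculus applied to the $C^1$-smooth map $t\mapsto \calI(t, z_k^h)$, and combine with the previous inequality to obtain \eqref{eq.Mief104}.

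\textbf{Step 2: the total-dissipation bound \eqref{eq.Mief105}.} Set $E_k := \calI(t_k^h, z_k^h) + c$ and $\Delta_k := t_k^h - t_{k-1}^h \leq h$. Apply \eqref{eq.Mief02} to bound $\partial_t\calI(\tau, z_k^h) \leq \mu(\calI(\tau, z_k^h)+c)$, then use \eqref{eq.Mief08} in the form $\calI(\tau, z_k^h) + c \leq (\calI(t_{k-1}^h, z_k^h)+c)\,e^{\mu(\tau - t_{k-1}^h)} \leq E_{k-1} e^{\mu(\tau - t_{k-1}^h)}$ for $\tau\in[t_{k-1}^h, t_k^h]$, where the second inequality uses minimality $\calI(t_{k-1}^h, z_k^h) \leq \calI(t_{k-1}^h, z_{k-1}^h)$. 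Integrating in $\tau$ and substituting into \eqref{eq.Mief104} yields the recursion
\[
E_k + \calR(z_k^h - z_{k-1}^h) \leq E_{k-1}\, e^{\mu \Delta_k}.
\]
This already gives $E_k \leq E_0\, e^{\mu t_k^h}$. For the sum of dissipations, rewrite $E_{k-1} e^{\mu \Delta_k} - E_k = (E_{k-1}-E_k) + E_{k-1}(e^{\mu\Delta_k}-1)$ and bound $E_{k-1} \leq E_0 e^{\mu t_{k-1}^h}$, so $E_{k-1}(e^{\mu\Delta_k}-1) \leq E_0 (e^{\mu t_k^h} - e^{\mu t_{k-1}^h})$. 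Summing $i=1,\dots,k$ telescopes to $E_0(e^{\mu t_k^h}-1) + E_0 - E_k$, giving
\[
E_k + \sum_{i=1}^k \calR(z_i^h - z_{i-1}^h) \leq E_0\, e^{\mu t_k^h} \leq (\calI(0,z_0)+c)\,e^{\mu T},
\]
which is \eqref{eq.Mief105}.

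\textbf{Step 3: the uniform $\calZ$-bound \eqref{eq.Mief106}.} From \eqref{eq.Mief0001}, \eqref{eq.Mief00} and $\calF\geq 0$, for every $z\in \calZ$ and $t\in [0,T]$,
\[
\calI(t, z) \geq \tfrac{\alpha}{2} \norm{z}_\calZ^2 - \norm{\ell(t)}_{\calV^*} \norm{z}_\calV \geq \tfrac{\alpha}{4} \norm{z}_\calZ^2 - C,
\]
with $C$ depending only on $\sup_{t\in[0,T]}\norm{\ell(t)}_{\calV^*}$ and the embedding $\calZ\hookrightarrow\calV$, via Young's inequality. Combined with the bound on $\calI(t_k^h, z_k^h)$ from \eqref{eq.Mief105}, this yields the claim.

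The main obstacle is the Step 2 Gronwall iteration: one must be careful to orient the monotonicity in \eqref{eq.Mief08} correctly (shifting from $\tau$ back to $t_{k-1}^h$ rather than forward to $t_k^h$) so that the right-hand side becomes $E_{k-1}$, making the recursion closed and the telescoping argument possible; the rest of the proof is a routine application of the direct method, minimality, and coercivity.
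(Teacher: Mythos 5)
Your proof is correct and fills in, with the standard Gronwall/telescoping argument, exactly what the paper delegates to the cited references \cite[Proposition 4.2]{MiEf06} and \cite[Theorem 2.1.5]{MieRou15}: minimality against the competitor $z_{k-1}^h$ gives \eqref{eq.Mief104}, the time-regularity estimates \eqref{eq.Mief02}--\eqref{eq.Mief08} close the recursion $E_k + \calR(z_k^h-z_{k-1}^h)\leq E_{k-1}e^{\mu\Delta_k}$ and its telescoping sum, and coercivity of $A$ together with $\calF\geq 0$ and Young's inequality yields \eqref{eq.Mief106}. The approach is the same; yours is simply self-contained rather than cited.
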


\begin{proof}
Estimate \eqref{eq.Mief104} follows as in   \cite[Proposition 
4.2]{MiEf06}, 
estimate \eqref{eq.Mief105} follows from \cite[Theorem 2.1.5]{MieRou15} 
and \eqref{eq.Mief106} is a consequence of \eqref{eq.Mief105} 
and the coercivity of $\calI$ (uniformly in $t$).
\end{proof}

\begin{proposition}[Optimality properties]
 The pairs $(z_k^h,t_k^h)_{k\geq 1}$ satisfy the following optimality 
properties:
 There exist Lagrange multipliers $\lambda_k^h\geq 0$ with 
 \begin{gather}
  \lambda_k^h(\norm{z_k^h - z_{k-1}^h}_\bbV - h)=0,\,,
  \label{eq.Mief111}\\ 
  h\dist_{\calV^*}\big( -\rmD_z\calI(t_{k-1}^h,z_k^h),\partial\calR(0)\big)= 
\lambda_{k}^h\norm{z_{k}^h - z_{k-1}^h}^2_\bbV\,,
\label{eq.Mief112}
\\
\label{eq.Mief113}
\calR(z_k^h - z_{k-1}^h) 
+ h \dist_{\calV^*}\big(-\rmD_z\calI(t_{k-1}^h,z_k^h),\partial\calR(0)\big)
= \langle -\rmD_z\calI(t_{k-1}^h,z_k^h), z_k^h - z_{k-1}^h\rangle\,,
\\
\label{eq.Mief1113}
\calR(z_k^h - z_{k-1}^h) 
+ \norm{z_k^h - z_{k-1}^h}_\bbV 
\dist_{\calV^*}\big(-\rmD_z\calI(t_{k-1}^h,z_k^h),\partial\calR(0)\big)
= \langle -\rmD_z\calI(t_{k-1}^h,z_k^h), z_k^h - z_{k-1}^h\rangle\,,
\\
\label{eq.Mief114}
\forall v\in \calZ\quad 
\calR(v)\geq -\langle \lambda_k^h\bbV(z_k^h - z_{k-1}^h) + 
\rmD_z\calI(t_{k-1}^h,z_k^h),v\rangle\,. 
 \end{gather}
\end{proposition}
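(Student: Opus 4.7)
The plan is to apply Lagrange multiplier theory to the constrained minimization \eqref{eq.Mief109} and then exploit the $1$-homogeneity of $\calR$ together with the identification of $\bbV$ as the Riesz isomorphism of $(\calV,\norm{\cdot}_\bbV)$ to rewrite the abstract optimality conditions in the explicit forms \eqref{eq.Mief111}--\eqref{eq.Mief114}.

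First I would rewrite the inequality constraint as $g(z):=\tfrac12\norm{z-z_{k-1}^h}_\bbV^2-\tfrac12h^2\le0$, which is smooth, convex and satisfies the Slater condition (at $z=z_{k-1}^h$). Standard KKT theory for the sum of the smooth term $\calI(t_{k-1}^h,\cdot)$, the convex, Lipschitz nondifferentiable term $\calR(\cdot-z_{k-1}^h)$ and the inequality constraint then yields a multiplier $\lambda_k^h\ge0$ fulfilling the complementarity relation \eqref{eq.Mief111} together with the subdifferential inclusion
\[
-\rmD_z\calI(t_{k-1}^h,z_k^h)-\lambda_k^h\bbV(z_k^h-z_{k-1}^h)\in\partial\calR(z_k^h-z_{k-1}^h).
\]
Because $\calR$ is convex and positively $1$-homogeneous, one has $\partial\calR(w)\subset\partial\calR(0)$ with $\langle\xi,w\rangle=\calR(w)$ for every $\xi\in\partial\calR(w)$; the support-function characterisation $\partial\calR(0)=\{\xi:\langle\xi,v\rangle\le\calR(v)\ \forall v\}$ then gives \eqref{eq.Mief114} immediately, and pairing the inclusion with $z_k^h-z_{k-1}^h$ gives the identity
\[
\calR(z_k^h-z_{k-1}^h)+\lambda_k^h\norm{z_k^h-z_{k-1}^h}_\bbV^2=\langle-\rmD_z\calI(t_{k-1}^h,z_k^h),z_k^h-z_{k-1}^h\rangle.
\]

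The only mildly nontrivial step is the distance identity \eqref{eq.Mief112}; this I would handle by a Hilbert projection argument. Setting $\eta:=-\rmD_z\calI(t_{k-1}^h,z_k^h)$ and $\xi:=\eta-\lambda_k^h\bbV(z_k^h-z_{k-1}^h)\in\partial\calR(0)$ and using that the dual Hilbert structure on $\calV^*$ is induced by $\bbV^{-1}$ (so $\norm{\bbV w}_{\calV^*}=\norm{w}_\bbV$ and $\langle\zeta_1,\zeta_2\rangle_{\calV^*}=\langle\zeta_1,\bbV^{-1}\zeta_2\rangle$), one computes for arbitrary $\tilde\xi\in\partial\calR(0)$
\[
\norm{\eta-\tilde\xi}_{\calV^*}^2-\norm{\eta-\xi}_{\calV^*}^2=\norm{\xi-\tilde\xi}_{\calV^*}^2+2\lambda_k^h\langle\xi-\tilde\xi,z_k^h-z_{k-1}^h\rangle\ge0,
\]
where the last term is nonnegative because $\xi\in\partial\calR(z_k^h-z_{k-1}^h)$ forces $\langle\xi,z_k^h-z_{k-1}^h\rangle=\calR(z_k^h-z_{k-1}^h)$ while $\tilde\xi\in\partial\calR(0)$ only gives $\langle\tilde\xi,z_k^h-z_{k-1}^h\rangle\le\calR(z_k^h-z_{k-1}^h)$. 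Hence $\xi$ is the $\calV^*$-projection of $\eta$ onto $\partial\calR(0)$ and $\dist_{\calV^*}(\eta,\partial\calR(0))=\lambda_k^h\norm{z_k^h-z_{k-1}^h}_\bbV$. Multiplying by $\norm{z_k^h-z_{k-1}^h}_\bbV$ and then invoking \eqref{eq.Mief111} (either $\lambda_k^h=0$, so both sides are zero, or $\norm{z_k^h-z_{k-1}^h}_\bbV=h$) produces \eqref{eq.Mief112}.

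Finally, substituting the distance identity into the pairing formula obtained above yields \eqref{eq.Mief1113} directly; \eqref{eq.Mief113} then follows by the same dichotomy from \eqref{eq.Mief111}, namely that on the active-constraint branch $h$ and $\norm{z_k^h-z_{k-1}^h}_\bbV$ coincide while on the inactive branch $\lambda_k^h=0$ and the distance vanishes by \eqref{eq.Mief112}. The principal obstacle is the projection identity for $\dist_{\calV^*}$: everything else is a straightforward manipulation of the KKT inclusion, but that step is where one genuinely needs both the compatibility of $\bbV$ with the $\calV^*$-inner product and the one-sided variational inequality encoded by $\xi\in\partial\calR(z_k^h-z_{k-1}^h)\subset\partial\calR(0)$.
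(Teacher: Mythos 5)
Your proposal is correct, and it takes a genuinely different route from the paper's. The paper works with $\Psi_h=\calR+I_h$, where $I_h$ is the indicator of the ball $\{\norm{v}_\bbV\le h\}$, passes to the Fenchel equality $\Psi_h(w)+\Psi_h^*(\eta)=\langle\eta,w\rangle$ and then invokes its Lemma \ref{app:lem1}, which computes $\Psi_h^*(\eta)=h\dist_{\calV^*}(\eta,\partial\calR(0))$ via the inf-convolution formula; this immediately yields \eqref{eq.Mief113}, and \eqref{eq.Mief111}--\eqref{eq.Mief112} are then read off by comparing with the Fenchel identity for $\calR$ alone. You instead obtain the multiplier $\lambda_k^h$ from the KKT system for the smooth constraint $\tfrac12\norm{z-z_{k-1}^h}_\bbV^2\le\tfrac12 h^2$ (which is morally the same as the paper's sum rule $\partial\Psi_h=\partial\calR+\partial I_h$), pair the resulting inclusion with $w=z_k^h-z_{k-1}^h$ to get the mixed identity $\calR(w)+\lambda_k^h\norm{w}_\bbV^2=\langle\eta,w\rangle$, and then replace the paper's appeal to the conjugate-functional formula by a direct Hilbert projection computation: you verify that $\xi=\eta-\lambda_k^h\bbV w$ is the $\calV^*$-nearest point of $\partial\calR(0)$ to $\eta$, using that $\xi\in\partial\calR(w)$ forces $\langle\xi,w\rangle=\calR(w)\ge\langle\tilde\xi,w\rangle$ for any $\tilde\xi\in\partial\calR(0)$. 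That gives $\dist_{\calV^*}(\eta,\partial\calR(0))=\lambda_k^h\norm{w}_\bbV$, from which \eqref{eq.Mief1113}, and then \eqref{eq.Mief112}--\eqref{eq.Mief113} by the $\lambda_k^h=0$ vs.\ $\norm{w}_\bbV=h$ dichotomy — the opposite order of the paper. The upshot is that your argument is self-contained and avoids pre-computing $\Psi_h^*$, whereas the paper's route is more systematic in that the conjugate formula from Lemma \ref{app:lem1} is reused elsewhere. One small point you take for granted and should spell out: the projection argument requires $\partial\calR(0)\subset\calV^*$ (not merely $\calZ^*$), which follows from the upper bound in \eqref{eq.Mief100} and density of $\calZ$ in $\calV$; this is exactly the first assertion of the paper's Lemma \ref{app:lem1}.
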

We refer to Lemma \ref{app:lem1} for identities relying on convex analysis and 
for the definition of the 
distance function $\dist_{\calV^*}(\cdot,\cdot)$. 
\begin{proof}
 Let $\Psi_h=\calR + I_h$ be given as in \eqref{app:e02}, where $I_h$ is the 
characteristic function of the set $\Set{v\in\calV}{\norm{v}_\bbV\leq h}$.   
Observe that $z_k^h$ 
minimizes 
$\calI(t_{k-1}^h,\cdot) + \Psi_h(\cdot - z_{k-1}^h)$. Hence,
\begin{align}
 \label{eq.Mief134}
 0\in \partial\Psi_h(z_k^h-z_{k-1}^h) + \rmD_z\calI(t_{k-1}^h,z_k^h)\,,
\end{align}
which is equivalent to
\[
 \Psi_h(z_k^h-z_{k-1}^h) + \Psi_h^*(-\rmD_z\calI(t_{k-1}^h,z_k^h)) = 
 \langle -\rmD_z\calI(t_{k-1}^h,z_k^h), z_k^h - z_{k-1}^h\rangle\,.
\]
Taking into account Lemma \ref{app:lem1} we arrive at \eqref{eq.Mief113}. 
Furthermore, there exists $\xi_k^h\in 
\partial I_h(z_k^h-z_{k-1}^h)$ 
such that $0\in \partial\calR(z_k^h - z_{k-1}^h) + \xi_k^h 
+\rmD_z\calI(t_{k-1}^h,z_k^h)$ and 
\[
 \calR(z_k^h-z_{k-1}^h) + \calR^*(-\xi_k^h-\rmD_z\calI(t_{k-1}^h,z_k^h)) = 
 -\langle \xi_k^h + \rmD_z\calI(t_{k-1}^h,z_k^h), z_k^h - z_{k-1}^h\rangle\,.
\]
Comparing this relation with \eqref{eq.Mief113} and taking into account Lemma 
\ref{app:lem1} yields \eqref{eq.Mief112} and \eqref{eq.Mief111} exploiting 
that $\xi_k^h=\lambda_k^h\bbV(z_k^h-z_{k-1}^h)$. Relation \eqref{eq.Mief114} 
is a consequence of the one-homogeneity of $\calR$ implying that 
$\partial\calR(z_k^h-z_{k-1}^h)\subset\partial\calR(0)$.  
Finally, relation \eqref{eq.Mief1113} follows from  \eqref{eq.Mief113} 
combined with \eqref{eq.Mief111} and \eqref{eq.Mief112}. 
\end{proof}
Observe that 
\begin{align}
\label{eq.Mief136}
 h \lambda_k^h =\begin{cases}
               0 &\text{if }\norm{z_k^h - z_{k-1}^h}_\bbV<h\\               
\dist_{\calV^*}\big(-\rmD_z\calI(t_{k-1}^h,z_k^h),\partial\calR(0)\big)
&\text{if } \norm{z_k^h - z_{k-1}^h}_\bbV =h
              \end{cases}\,.
\end{align}
The next proposition is the main result of this section and guarantees that 
the procedure in \eqref{eq.Mief109}--\eqref{eq.Mief110}  leads to 
$t_{N(h)}^h=T$ 
after a finite number of iteration steps $N(h)$.

\begin{proposition}
\label{prop.finitelength-inf} 
Let $z_0\in \calZ$ satisfy $\rmD_z\calI(0,z_0)\in \calV^*$. 
 For every $h>0$ there exists $N(h)\in \N$ such that $t_{N(h)}^h=T$. Moreover, 
there exist constants $c_1,c_2,c_3>0$ such that for all $h>0, k\leq N(h)$ we 
have
\begin{align}
\label{eq.Mief119}
 \lambda_{k+1}^h \norm{z_{k+1}^h - z_k^h}_\bbV 
 + c_1 \sum_{i=0}^k \norm{z_{i+1}^h - z_i^h}_\calZ 
&\leq c_2\Big(t^h_k 
 + 
\norm{\rmD_z\calI(t_0,z_0)}_{\calV^*}
+ \sum_{i=0}^k \calR(z^h_{i+1}-z^h_i)\Big)\,,\\
\norm{\rmD_z\calI(t_{k-1}^h,z_k^h)}_{\calV^*}&\leq c_3.
\label{eq.Mief150}
\end{align}

\end{proposition}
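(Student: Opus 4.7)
The plan is to derive a discrete BV-type recursion by comparing optimality conditions at two consecutive steps; summing this recursion yields \eqref{eq.Mief119}, which in turn forces $t_k^h$ to reach $T$ in finitely many iterations and, via the optimality condition again, delivers \eqref{eq.Mief150}. Abbreviate $\Delta z_k:=z_k^h-z_{k-1}^h$, $\tau_k:=t_k^h-t_{k-1}^h$, and $\sigma_k:=\lambda_k^h\norm{\Delta z_k}_\bbV$. From \eqref{eq.Mief134} and the one-homogeneity of $\calR$, for every $k\geq 1$ the element
\[
 \eta_k:=-\rmD_z\calI(t_{k-1}^h,z_k^h)-\lambda_k^h\bbV\Delta z_k
\]
lies in $\partial\calR(\Delta z_k)\subset\partial\calR(0)$, so $\langle\eta_k,\Delta z_k\rangle=\calR(\Delta z_k)$ and $\langle\eta_k,v\rangle\leq\calR(v)$ for every $v\in\calX$. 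Consequently $\langle\eta_{k+1}-\eta_k,\Delta z_{k+1}\rangle\geq 0$. Inserting the explicit form $\rmD_z\calI(t,z)=Az+\rmD\calF(z)-\ell(t)$ and using $\calZ$-ellipticity of $A$, the $C^1$-regularity of $\ell$, Lemma \ref{lem.estDF} applied with $\rho$ from the uniform bound \eqref{eq.Mief106}, together with Cauchy--Schwarz in $\bbV$ for the mixed Lagrange-multiplier term $\lambda_k^h\langle\bbV\Delta z_k,\Delta z_{k+1}\rangle$, gives
\[
 \alpha\norm{\Delta z_{k+1}}_\calZ^2+\lambda_{k+1}^h\norm{\Delta z_{k+1}}_\bbV^2\leq\varepsilon\norm{\Delta z_{k+1}}_\calZ^2+\bigl(C_\varepsilon\calR(\Delta z_{k+1})+C\tau_k+\sigma_k\bigr)\norm{\Delta z_{k+1}}_\bbV.
\]
Moving the $\varepsilon$-term to the left, dividing by $\norm{\Delta z_{k+1}}_\bbV$ (the case $\norm{\Delta z_{k+1}}_\bbV=0$ being trivial since then $\Delta z_{k+1}=0$ in $\calZ$ as well), and using $\norm{\Delta z_{k+1}}_\bbV\leq\norm{\Delta z_{k+1}}_\calZ$ to estimate $\norm{\Delta z_{k+1}}_\calZ^2/\norm{\Delta z_{k+1}}_\bbV\geq\norm{\Delta z_{k+1}}_\calZ$, one obtains the recursion
\[
 (\alpha-\varepsilon)\norm{\Delta z_{k+1}}_\calZ+\sigma_{k+1}\leq C_\varepsilon\calR(\Delta z_{k+1})+C\tau_k+\sigma_k\qquad\text{for every }k\geq 1.
\]

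The first step is handled separately: testing the inclusion $\eta_1\in\partial\calR(\Delta z_1)$ with $\Delta z_1$ and decomposing $\rmD_z\calI(0,z_1^h)=\rmD_z\calI(0,z_0)+A\Delta z_1+\rmD\calF(z_1^h)-\rmD\calF(z_0)$ leads, by exactly the same Ehrling-type argument, to
\[
 (\alpha-\varepsilon)\norm{\Delta z_1}_\calZ+\sigma_1\leq C\norm{\rmD_z\calI(0,z_0)}_{\calV^*}+C_\varepsilon\calR(\Delta z_1),
\]
where the standing hypothesis $\rmD_z\calI(0,z_0)\in\calV^*$ is crucially used. Summing the recursion over $k=1,\ldots,K$ telescopes the $\sigma$-chain into $\sigma_{K+1}-\sigma_1$; adding the first-step bound cancels the remaining $\sigma_1$ and yields \eqref{eq.Mief119} after the reindexing $i+1\leftrightarrow k$, noting $\sum_k\tau_k=t_K^h$.

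Finiteness of $N(h)$ then follows from \eqref{eq.Mief110}: as long as $t_K^h<T$ one has $\tau_k=h-\norm{\Delta z_k}_\bbV$, hence
\[
 Kh=t_K^h+\sum_{k=1}^K\norm{\Delta z_k}_\bbV\leq T+C,
\]
the last bound being uniform in $h$ thanks to the $\calZ$-BV estimate just derived together with $\norm{\cdot}_\bbV\leq\norm{\cdot}_\calZ$ and \eqref{eq.Mief105}. Finally, \eqref{eq.Mief150} follows directly from the decomposition $\rmD_z\calI(t_{k-1}^h,z_k^h)=-\eta_k-\lambda_k^h\bbV\Delta z_k$: by \eqref{eq.Mief100}, $\partial\calR(0)$ is bounded in $\calX^*\hookrightarrow\calV^*$, and $\norm{\lambda_k^h\bbV\Delta z_k}_{\calV^*}=\sigma_k$ is already uniformly bounded by \eqref{eq.Mief119} combined with \eqref{eq.Mief105} and the standing hypothesis on $\rmD_z\calI(0,z_0)$. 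The main technical delicacy is the bookkeeping in the recursion: the mixed term $\lambda_k^h\langle\bbV\Delta z_k,\Delta z_{k+1}\rangle$ has to be estimated by plain Cauchy--Schwarz rather than an asymmetric Young inequality, so that dividing through by $\norm{\Delta z_{k+1}}_\bbV$ produces exactly the shifted quantity $\sigma_k$ on the right; any other split would destroy the clean telescoping of the $\sigma$-sequence.
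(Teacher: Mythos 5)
Your proof is correct and follows essentially the same route as the paper: the key inequality is the monotonicity of the subdifferential elements $\eta_k\in\partial\calR(\Delta z_k)\subset\partial\calR(0)$, which after inserting $\rmD_z\calI=A+\rmD\calF-\ell$, invoking the $\calZ$-ellipticity of $A$, Lemma \ref{lem.estDF}, and plain Cauchy--Schwarz on the mixed Lagrange-multiplier term, gives precisely the paper's recursion \eqref{eq.Mief140} after dividing by $\norm{\Delta z_{k+1}}_\bbV$; the $k=1$ step and the telescoping sum are also handled in the same way. The only cosmetic differences are that you package the starting inequality through the $\eta_k$ formalism rather than subtracting the explicit relations \eqref{eq.Mief113}/\eqref{eq.Mief114}, you deduce finiteness from the direct counting identity $Kh=t_K^h+\sum_k\norm{\Delta z_k}_\bbV\leq T+C$ rather than by the paper's contradiction argument, and you obtain \eqref{eq.Mief150} from the explicit decomposition $\rmD_z\calI=-\eta_k-\lambda_k^h\bbV\Delta z_k$ rather than via the distance relation \eqref{eq.Mief112}; all of these are equivalent.
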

\begin{proof}

 Inserting \eqref{eq.Mief112} into \eqref{eq.Mief113}, rewriting this   
identity for the index $k+1$ (instead of $k$) and subtracting the resulting 
equation from \eqref{eq.Mief114} with $v=z_{k+1}^h-z_k^h$ yields 
\begin{align}
\label{eq.Mief145}
 0&\geq \lambda_{k+1}^h \norm{z_{k+1}^h - z_k^h}_\bbV^2  
 -\lambda_k^h\langle\bbV 
(z_k^h - z_{k-1}^h),z_{k+1}^h - z_k^h\rangle 
+\langle \rmD_z\calI(t_k^h,z_{k+1}^h) -\rmD_z\calI(t_{k-1}^h,z_k^h), z_{k+1}^h 
- z_k^h\rangle\,.
\end{align}
Substituting $\calI$ and rearranging the terms yields
\begin{multline}
\label{eq.Mief115}
 \lambda_{k+1}^h \norm{z_{k+1}^h - z_k^h}_\bbV^2  
 -\lambda_k^h\langle\bbV 
(z_k^h - z_{k-1}^h),z_{k+1}^h - z_k^h\rangle  
+
\langle A(z_{k+1}^h - z_k^h),(z_{k+1}^h - z_k^h)\rangle \\
\leq
\langle \rmD_z\calF(z_k^h) - \rmD_z\calF(z_{k+1}^h),z_{k+1}^h - z_k^h\rangle 
+ \langle \ell(t_{k-1}^h)-\ell(t_k^h),z_{k+1}^h - z_k^h\rangle\,.
\end{multline}
With Lemma \ref{lem.estDF},  the assumptions on $\ell$ and 
\eqref{eq.Mief106}, the right hand side is estimated by
\begin{align}
\label{est:rhs}
 r.h.s.\leq \frac{\alpha}{2}\norm{z_{k+1}^h - z_{k}^h}^2_\calZ 
 + C\norm{z_{k+1}^h - z_k^h}_\calV\big( (t_k^h- t_{k-1}^h) + \calR(z^h_{k+1} - 
z_k^h)\big), 
\end{align}
where $\alpha>0$ is the constant from \eqref{eq.Mief0001}. 
The left hand side of \eqref{eq.Mief115} can be estimated as follows:
\begin{align}
\label{eq.Mief117}
 l.h.s &\geq 
  \lambda_{k+1}^h \norm{z_{k+1}^h - z_k^h}_\bbV^2  
 -\lambda_k^h
\norm{z_k^h - z_{k-1}^h}_\bbV\norm{z_{k+1}^h - z_k^h}_\bbV 
+ \alpha \norm{z_{k+1}^h - z_k^h}_\calZ^2\,. 
\end{align}
Joining  \eqref{est:rhs} with  \eqref{eq.Mief117}  and rearranging the terms, 
we arrive at
\begin{multline}
\lambda_{k+1}^h \norm{z_{k+1}^h - z_k^h}_\bbV^2  
 -\lambda_k^h
\norm{z_k^h - z_{k-1}^h}_\bbV\norm{z_{k+1}^h - z_k^h}_\bbV 
+ c_1\norm{z_{k+1}^h - z_k^h}_\calZ\norm{z_{k+1}^h - z_k^h}_\bbV
\\
\leq 
c_2 \big(\calR(z_{k+1}^h-z_k^h) + t_k^h - 
t_{k-1}^{h}\big) \norm{z_{k+1}^h-z_k^h}_\bbV, 
\end{multline}
which implies 
\begin{align}
\label{eq.Mief140}
\lambda_{k+1}^h \norm{z_{k+1}^h - z_k^h}_\bbV
 -\lambda_k^h
\norm{z_k^h - z_{k-1}^h}_\bbV
+ c_1\norm{z_{k+1}^h - z_k^h}_\calZ 
\leq 
c_2 \big(\calR(z_{k+1}^h-z_k^h) + t^h_k - t^h_{k-1}\big). 
\end{align}
Summing up this estimate with respect to $k$ finally yields
\begin{align}
\label{eq.Mief118}
 \lambda_{k+1}^h \norm{z_{k+1}^h - z_k^h}_\bbV 
 + c_1 \sum_{i=1}^k \norm{z_{i+1}^h - z_i^h}_\calZ
 \leq \lambda_1^h
\norm{z_1^h - z_{0}^h}_\bbV
 + c_2\big(t^h_k + \sum_{i=1}^k \calR(z^h_{i+1}-z^h_i)\big)\,.
\end{align}
It remains to estimate the term $\norm{z_1^h - z_{0}^h}_\calZ$. 
Starting again from \eqref{eq.Mief113} for $k=1$ in combination with 
\eqref{eq.Mief112} and inserting a zero we obtain  after rearranging the terms
\begin{align*}
 \langle \rmD_z\calI(t_0,z_1) - \rmD_z\calI(t_0,z_0),z_1-z_0\rangle 
 + \calR(z_1 - z_0) 
 + \lambda_1^h\norm{z_1-z_0}^2_\bbV 
= -\langle \rmD_z\calI(t_0,z_0), z_1-z_0\rangle\,.
\end{align*}
The first term on the left hand side is treated as above, so that finally 
\begin{align}
\label{eq.Mief148}
 c\norm{z_1-z_0}^2_\calZ +  \lambda_1^h\norm{z_1-z_0}^2_\bbV \leq 
c\big(\calR(z_1-z_0) 
 +
\norm{\rmD_z\calI(t_0,z_0)}_{\calV^*}\big)\norm{z_1-z_0}_\bbV\,
\end{align}
which is  the analogue to \eqref{eq.Mief140}. 
Adding this estimate to \eqref{eq.Mief118} we  arrive at 
\eqref{eq.Mief119}. 

Since $\bnorm{z_k^h-z_{k-1}^h}_\bbV\leq h$, the identity \eqref{eq.Mief112} 
implies that 
\begin{align*}
 \dist_{\calV^*}(-\rmD_z\calI(t_{k-1}^h,z_k^h) ,\partial\calR(0)) \leq 
\lambda_{k}^h\norm{z_k^h-z_{k-1}^h}_\bbV,
\end{align*}
which together with \eqref{eq.Mief119} and \eqref{eq.Mief105} leads to 
\eqref{eq.Mief150}. 

Thanks to 
\eqref{eq.Mief105} and the assumption on $z_0$,  
the right hand side
 of \eqref{eq.Mief119} is uniformly bounded with respect to  $k$. Hence, 
 if $T$ is not reached after a finite number of steps, then 
the series 
$ \sum_{k=0}^\infty \norm{z_{k+1}^h-z_k^h}_\bbV$ converges and 
 there exists 
$t_*\leq T$ such that $\lim_{k\to \infty} t_k^h=t_*$. In particular, 
the sequence $(t_{k+1}^h-t_k^h)_{k\in \N}$ tends to zero. But this implies that 
$(\norm{z_{k+1}^h - z_{k}^h}_\bbV)_{k\in \N}$ tends to $h$ for $k\to\infty$, a 
contradiction to 
the convergence of the series  $ \sum_{k=0}^\infty 
\norm{z_{k+1}^h-z_k^h}_\bbV$. 
\end{proof}

Similarly to \cite{MiEf06} we introduce the  piecewise affine and the 
left and right continuous  piecewise constant interpolants: Let 
$S_h:= T + \sum_{i=1}^{N(h)}\norm{z^h_i-z^h_{i-1}}_\bbV$ and $s_k^h=kh$. For 
$s\in [s_{k-1}^h, 
s_k^h)\subset[0,S_h]$ 
\begin{align}
 \hat z_h(s)&= z^h_{k-1} + (s - s_{k-1}^h) h^{-1}(z_k^h - z_{k-1}^h)\,,\qquad 
 \hat t_h(s)= t^h_{k-1} + (s - s_{k-1}^h) h^{-1}(t_k^h - t_{k-1}^h)\,,
 \label{eq.Mief19}\\
 \overline{z}_h(s)&:= z^h_k,\qquad \overline{t}_h(s):= t_k^h\,, 
 \qquad\qquad \underline{z}_h(s):= z^h_{k-1},
 \qquad \underline{t}_h(s):= t_{k-1}^h\,.  
\label{eq.Mief20}
 \end{align}
 As a consequence of Proposition \ref{prop.basicest} and 
Proposition \ref{prop.finitelength-inf} we deduce 
\begin{align}
\hat t_h(S_h)=\bar t_h(S_h)=T,\quad (\hat t_h,\hat z_h)\in 
W^{1,\infty}([0,S_h],\R\times \calV),
\label{eq.Mief125}\\
\sup_h\left( \norm{\hat t_h}_{W^{1,\infty}([0,S_h],\R)} 
+ \norm{\hat z_h}_{W^{1,\infty}([0,S_h];\calV)}+ \norm{\hat 
z_h}_{L^{\infty}([0,S_h];\calZ)} + S_h
\right)<\infty
\label{eq.Mief124}\\
\text{for a.a.\  }s\in [0,S_h]:\quad 
\hat t'_h(s)\geq 0,\,\,\hat t_h'(s) + \norm{\hat z'_h(s)}_\bbV=1\,.
\label{eq.Mief122}
\end{align}

\begin{proposition}[Discrete energy-dissipation  identity]
 For all $\sigma_1\leq \sigma_2\in [0,S_h]$ we have 
 \begin{multline}
 \label{eq.Miefdisceng}
  \calI(\hat t_h(\sigma_2),\hat z_h(\sigma_2)) + \int_{\sigma_1}^{\sigma_2} 
\calR(\hat z_h'(s))  + 
\norm{\hat z_h'(s)}_\bbV
\dist_{\calV^*}(-\rmD_z\calI(\underline{t}_h(s),\bar 
z_h(s)),\partial\calR(0))\ds 
\\
= \calI(\hat t_h(\sigma_1),\hat z_h(\sigma_1))
+ \int_{\sigma_1}^{\sigma_2} 
  \partial_t\calI(\hat t_h(s),\hat z_h(s))\hat t_h'(s)\ds 
+\int_{\sigma_1}^{\sigma_2} r_h(s)\ds\,,
 \end{multline}
where $r_h(s)=\langle \rmD_z\calI(\hat t_h(s),\hat z_h(s)) 
-\rmD_z\calI(\underline{t}_h(s),\bar z_h(s)),\hat z_h'(s)\rangle $. Moreover 
 the complementarity condition 
\begin{align}
\label{eq.Mief121}
\text{for a.a.\ }s\in[0,S_h]:\quad  \hat 
t'_h(s)\dist_{\calV^*}(-\rmD_z\calI(\underline{t}_h(s),\bar 
z_h(s),\partial\calR(0))=0\,
\end{align}
is fulfilled. 
There exists a constant $C>0$ such that the remainder $r_h$ 
satisfies for all $h>0$ and all $\sigma_1<\sigma_2\in [0,S_h]$
\begin{align}
\label{eq.Mief123}
 \int_{\sigma_1}^{\sigma_2} r_h(s)\ds \leq C h\,. 
\end{align}
\end{proposition}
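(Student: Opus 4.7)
The plan is to combine the $C^1$-chain rule for $s\mapsto\calI(\hat t_h(s),\hat z_h(s))$ with the pointwise optimality identity \eqref{eq.Mief1113} translated into the rescaled variable $s$. Since $\calI\in C^1([0,T]\times\calZ;\R)$ and $(\hat t_h,\hat z_h)$ is piecewise affine with $\hat z_h'$ taking values in $\calZ$, I may write for any $\sigma_1\leq\sigma_2\in[0,S_h]$
\[
\calI(\hat t_h(\sigma_2),\hat z_h(\sigma_2))-\calI(\hat t_h(\sigma_1),\hat z_h(\sigma_1))=\int_{\sigma_1}^{\sigma_2}\Bigl(\partial_t\calI(\hat t_h(s),\hat z_h(s))\hat t_h'(s)+\langle\rmD_z\calI(\hat t_h(s),\hat z_h(s)),\hat z_h'(s)\rangle\Bigr)\,\d s.
\]
Adding and subtracting $\langle\rmD_z\calI(\underline t_h(s),\bar z_h(s)),\hat z_h'(s)\rangle$ inside the integrand produces exactly the remainder $r_h(s)$ of the statement, so it remains to identify the inserted term with $-\calR(\hat z_h'(s))-\norm{\hat z_h'(s)}_\bbV\,\dist_{\calV^*}(-\rmD_z\calI(\underline t_h(s),\bar z_h(s)),\partial\calR(0))$.

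To carry out this identification I would work on a single subinterval $(s_{k-1}^h,s_k^h)$, on which $\hat z_h'(s)=h^{-1}(z_k^h-z_{k-1}^h)$, $\bar z_h(s)=z_k^h$ and $\underline t_h(s)=t_{k-1}^h$ are all constant. Dividing the optimality identity \eqref{eq.Mief1113} by $h$ and exploiting the $1$-homogeneity of $\calR$ together with $h^{-1}\norm{z_k^h-z_{k-1}^h}_\bbV=\norm{\hat z_h'(s)}_\bbV$ yields precisely
\[
\langle -\rmD_z\calI(\underline t_h(s),\bar z_h(s)),\hat z_h'(s)\rangle=\calR(\hat z_h'(s))+\norm{\hat z_h'(s)}_\bbV\,\dist_{\calV^*}\bigl(-\rmD_z\calI(\underline t_h(s),\bar z_h(s)),\partial\calR(0)\bigr);
\]
plugging this back and integrating then gives \eqref{eq.Miefdisceng}. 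The complementarity \eqref{eq.Mief121} is a direct consequence of \eqref{eq.Mief110} combined with \eqref{eq.Mief136}: if $\hat t_h'(s)>0$ on a piece then $t_k^h>t_{k-1}^h$, forcing $\norm{z_k^h-z_{k-1}^h}_\bbV<h$, so $\lambda_k^h=0$ and hence $\dist_{\calV^*}(-\rmD_z\calI(t_{k-1}^h,z_k^h),\partial\calR(0))=0$ by \eqref{eq.Mief112}.

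For the remainder estimate \eqref{eq.Mief123} I would decompose $\rmD_z\calI(t,z)=Az+\rmD_z\calF(z)-\ell(t)$ and treat each contribution to $r_h$ separately, using the pointwise identity $\hat z_h(s)-\bar z_h(s)=-(s_k^h-s)\hat z_h'(s)$ valid on $(s_{k-1}^h,s_k^h)$. The operator $A$ contributes $\int_{s_{k-1}^h}^{s_k^h}\langle A(\hat z_h-\bar z_h),\hat z_h'\rangle\,\d s=-\tfrac{1}{2}\langle A(z_k^h-z_{k-1}^h),z_k^h-z_{k-1}^h\rangle\leq 0$ by the symmetry and $\calZ$-ellipticity of $A$, hence is harmless for an upper bound. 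The load term is bounded by $\int\norm{\ell(\hat t_h)-\ell(\underline t_h)}_{\calV^*}\norm{\hat z_h'}_\calV\,\d s\leq Ch\,S_h\leq Ch$, using $|\hat t_h(s)-\underline t_h(s)|\leq t_k^h-t_{k-1}^h\leq h$, the bound $\norm{\hat z_h'}_\calV\leq C$ following from \eqref{eq.Mief122}, and the uniform bound on $S_h$ in \eqref{eq.Mief124}.

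The main technical point is the $\calF$-contribution, where the extra factor $h$ has to be extracted honestly. I would apply a Taylor expansion for $\rmD_z\calF$ together with \eqref{ass.F01} and the uniform bound \eqref{eq.Mief106} to obtain $\norm{\rmD_z\calF(\hat z_h(s))-\rmD_z\calF(\bar z_h(s))}_{\calV^*}\leq C(s_k^h-s)\norm{\hat z_h'(s)}_\calZ$; pairing with $\hat z_h'(s)$ in $\calV$, integrating over $(s_{k-1}^h,s_k^h)$ and summing over $k$ produces a bound of the form $Ch\sum_{k=1}^{N(h)}\norm{z_k^h-z_{k-1}^h}_\calZ\leq Ch$ thanks to the BV-estimate \eqref{eq.Mief119}. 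The main obstacle is precisely ensuring that this $\calF$-piece genuinely carries a factor $h$ rather than an uncontrolled constant; the key is to exploit the exact algebraic identity $\hat z_h-\bar z_h=-(s_k^h-s)\hat z_h'$ (rather than the weaker Ehrling-type bound of Lemma \ref{lem.estDF}), since then $(s_k^h-s)$ integrates to $h^2/2$ and, after cancelling the $h^{-1}$ hidden in $\norm{\hat z_h'}_\calZ$, exactly one factor $h$ remains in front of the summable BV-sum.
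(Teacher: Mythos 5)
Your proof is correct and follows the same overall strategy as the paper: chain rule for $s\mapsto\calI(\hat t_h(s),\hat z_h(s))$, insertion of the discrete optimality identity (you use \eqref{eq.Mief1113}, the paper cites \eqref{eq.Mief113}; both yield the displayed integrand, and \eqref{eq.Mief1113} arguably matches the claimed form of \eqref{eq.Miefdisceng} more directly), and the complementarity argument via \eqref{eq.Mief111}--\eqref{eq.Mief112} and \eqref{eq.Mief136} is exactly what the paper invokes. The one place you genuinely diverge is the treatment of the $\calF$-contribution to $r_h$. The paper applies the interpolation estimate of Lemma~\ref{lem.estDF} with $\varepsilon=\alpha/2$: this produces an $\varepsilon\norm{\hat z_h-\bar z_h}_\calZ^2$ piece, which is absorbed into the negative $A$-term, plus a term proportional to $\calR(\hat z_h-\bar z_h)\norm{\hat z_h-\bar z_h}_\bbV$, and the summation over $k$ is then closed via the dissipation bound \eqref{eq.Mief105}, i.e.\ $\sum_k\calR(z_k^h-z_{k-1}^h)\leq C$. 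You instead use the direct Lipschitz bound $\norm{\rmD_z\calF(\hat z_h)-\rmD_z\calF(\bar z_h)}_{\calV^*}\leq C\norm{\hat z_h-\bar z_h}_\calZ$ from \eqref{ass.F01} and close the summation with the BV-estimate \eqref{eq.Mief119}, i.e.\ $\sum_k\norm{z_k^h-z_{k-1}^h}_\calZ\leq C$. Both routes are valid given what is available by this point; yours is a bit more direct in that it avoids the absorption step, while the paper's only needs the basic dissipation bound from Proposition~\ref{prop.basicest} and does not lean on the more involved Proposition~\ref{prop.finitelength-inf} for this particular step. Your verification that exactly one factor of $h$ survives (the $h^2/2$ from integrating $(s_k^h-s)$, divided by the single $h^{-1}$ in $\norm{\hat z_h'}_\calZ$ after bounding $\norm{\hat z_h'}_\calV$ by a constant via \eqref{eq.Mief122}) is correct.
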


\begin{proof}
 Relation \eqref{eq.Mief121} is an immediate consequence of 
\eqref{eq.Mief111}--\eqref{eq.Mief112}. The energy identity follows from 
\eqref{eq.Mief113} by applying the chain rule and integrating with respect to 
$s$. 
In order to estimate $r_h$ we proceed as follows: 
Observe first that $\hat z_h(s) - \bar z_h(s)= (s -s_{k+1}^h)\hat z_h'(s)$ 
for 
$s\in [s_k^h,s_{k+1}^h)$. 
Taking into account the  definition of $\calI$ and of the interpolants, we 
find by applying Lemma \ref{lem.estDF} with $\epsilon=\alpha/2$ ($\alpha$ as 
the ellipticity constant of $A$)
\begin{multline*}
 r_h(s)\leq \alpha(s-s_{k+1}^h) \norm{\hat z_h'(s)}_\calZ^2 
 +(s_{k+1}^h - s)
 \left(\epsilon \norm{\hat z_h'(s)}_\calZ^2
+ C_\epsilon \calR(\hat z_h'(s)) 
\norm{\hat z_h'(s) }_\calV \right) 
\\
+(s_{k+1}^h -s)\hat t'(s)\norm{\hat 
z'(s)}_\calV\norm{\ell}_{C^1([0,T];\calV^*)}.
\end{multline*}
 Integration with respect to $s$ yields
\begin{align*}
 \int_{\sigma_0}^{\sigma_1} r_h(s)\ds \leq 
 c h\left(T + \sum_{i=1}^{N(h)}\calR(z_i^h-z_{i-1}^h)\right)\,.
\end{align*}
Taking into account \eqref{eq.Mief105} we finally arrive at \eqref{eq.Mief123}.
\end{proof}

\begin{theorem}
\label{thm.Mie01}
 Let $z_0\in \calZ$  satisfy $\rmD_z\calI(0,z_0)\in \calV^*$ 
  and assume that $\calF$ satisfies \eqref{eq.Mief00}, \eqref{ass.F01}  
and \eqref{ass.fweakconv}. There exists a sequence $(h_n)_{n\in \N}$ with 
$h_n\rightarrow0$ as $n\rightarrow\infty$, $S\in (0,\infty)$ and functions 
$\hat 
t\in W^{1,\infty}((0,S);\R)$ and $\hat z\in W^{1,\infty}((0,S);\calV)\cap 
L^\infty((0,S);\calZ)$ 
such that for $n\to \infty$ 
\begin{gather}
 S_{h_n}\to S, 
 \label{eq.Mief126} \\
  \hat t_{h_n}\overset{*}{\rightharpoonup} \hat t \text{ in 
 }W^{1,\infty}((0,S);\R),\quad 
 \hat t_{h_n}(s)\to\hat t(s) \text{ for every }s\in  [0,S],
 \label{eq.Mief127} \\
 \hat z_{h_n}\overset{*}{\rightharpoonup} \hat z
 \text{ weakly$*$ in }W^{1,\infty}((0,S);\calV)\cap L^\infty((0,S);\calZ)
 \label{eq.Mief128} \\
 \hat z_{h_n}(s)\rightharpoonup \hat z(s) \text{ weakly in }\calZ \text{ for 
 every }s\in [0,S]\,.
 \label{eq.Mief129} 
\end{gather}
Moreover, the limit pair $(\hat t, \hat z)$ satisfies 
\begin{gather}
\hat t(0)=0,\, \hat t(S)=T,\, \hat z(0)=z_0\,, 
\label{eq.Mief130}
\\
\text{for a.a.\  }s\in [0,S]:\quad 
\hat t'(s)\geq 0,\,\,\hat t'(s) + \norm{\hat z'(s)}_\bbV\leq 1,\,
 \hat 
t'(s)\dist_{\calV^*}(-\rmD_z\calI(\hat t(s),\hat z(s)),\partial\calR(0))=0\, 
\label{eq.Mief131}
\end{gather}
together with the energy identity 
\begin{multline}
\label{eq.Mief135} 
  \calI(\hat t(s_1),\hat z(s_1)) + \int_{0}^{s_1} 
\calR(\hat z'(s))  + 
\norm{\hat z'(s)}_\bbV\dist_{\calV^*}(-\rmD_z\calI(\hat t(s), 
\hat z(s)),\partial\calR(0))\ds 
\\
= \calI(\hat t(0),\hat z(0))
+ \int_{0}^{s_1} 
  \partial_t\calI(\hat t(s),\hat z(s))\hat t'(s)\ds\, 
 \end{multline}
 that  is valid for all $s_1\in [0,S]$. 
Every accumulation point $(\hat t, \hat z)$ (in the sense of 
\eqref{eq.Mief126}--\eqref{eq.Mief129}) of time incremental sequences $(\hat 
t_h,\hat z_h)_{h>0}$  satisfies \eqref{eq.Mief130}--\eqref{eq.Mief135}.
\end{theorem}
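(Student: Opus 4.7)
My plan is to proceed in three stages: first extract a convergent subsequence by compactness, then pass to the limit in the complementarity condition and the discrete energy–dissipation identity, and finally obtain the equality in the energy balance by combining a liminf inequality with a chain-rule based upper bound.

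First I would use the uniform bounds collected in \eqref{eq.Mief124} to get compactness. Since $\sup_h S_h < \infty$, I may pass to a subsequence with $S_{h_n}\to S$ and, after extending every interpolant constantly on $[0,\sup_h S_h]\setminus [0,S_h]$, apply Banach–Alaoglu in $W^{1,\infty}$ to obtain \eqref{eq.Mief127} and the weak-$*$ convergence in \eqref{eq.Mief128}. Because $\calZ\Subset\calV$ and $\hat z_{h_n}$ is uniformly bounded in $W^{1,\infty}(0,S;\calV)\cap L^\infty(0,S;\calZ)$, an Arzelà–Ascoli argument gives $\hat z_{h_n}\to\hat z$ strongly in $C([0,S];\calV)$; combined with the $L^\infty$-bound in $\calZ$ and a standard diagonal selection this yields the pointwise weak convergence in \eqref{eq.Mief129}. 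Passing to the limit in \eqref{eq.Mief125} yields $\hat t(0)=0$, $\hat t(S)=T$, $\hat z(0)=z_0$, and lower semicontinuity applied to \eqref{eq.Mief122} gives $\hat t'(s)\geq 0$ and $\hat t'(s)+\norm{\hat z'(s)}_\bbV\leq 1$ for a.e.\ $s$.

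Next I would pass to the limit in the complementarity condition \eqref{eq.Mief121}. Writing $\hat z_{h_n}(s)-\bar z_{h_n}(s)\to 0$ in $\calV$ and $\underline t_{h_n}(s)-\hat t_{h_n}(s)\to 0$, and using the structure $\calI=\tfrac12\langle Az,z\rangle+\calF(z)-\langle\ell,z\rangle$ together with the weak–weak continuity of $\rmD_z\calF$ in \eqref{ass.fweakconv} and the linearity of $A$ and $\ell$, I obtain $\rmD_z\calI(\underline t_{h_n}(s),\bar z_{h_n}(s))\rightharpoonup \rmD_z\calI(\hat t(s),\hat z(s))$ in $\calZ^*$. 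The uniform bound \eqref{eq.Mief150} keeps these gradients in a ball of $\calV^*$, and since $\partial\calR(0)$ is weak-$*$ closed in $\calV^*$, the map $\xi\mapsto\dist_{\calV^*}(-\xi,\partial\calR(0))$ is weak-$*$ lower semicontinuous. Combining this with the weak-$*$ convergence $\hat t'_{h_n}\overset{*}{\rightharpoonup}\hat t'$ and the fact that $\hat t'_{h_n}\geq 0$, Fatou-type arguments transport \eqref{eq.Mief121} into the limiting complementarity condition in \eqref{eq.Mief131}.

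For the energy balance, I would first pass to $\liminf$ in \eqref{eq.Miefdisceng} on $[0,s_1]$. The left-hand side terms are handled by continuity of $\calI$ in $\calZ$-weak topology (using $\rmD_z\calF\in C(\calZ;\calV^*)$ and weak continuity of quadratic forms) for $\calI(\hat t(s_1),\hat z(s_1))$, by the classical lsc of $\int_0^{s_1}\calR(\hat z'(s))\,ds$ under weak convergence in $W^{1,1}(0,s_1;\calX)$, and by Ioffe's lsc theorem applied to the nonnegative integrand $\norm{\hat z'_h(s)}_\bbV \dist_{\calV^*}(\cdot,\partial\calR(0))$, exploiting the liminf on the distance established above. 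The remainder $\int_0^{s_1} r_{h_n}(s)\,ds\to 0$ thanks to \eqref{eq.Mief123}, and the power term $\int_0^{s_1}\partial_t\calI(\hat t_{h_n},\hat z_{h_n})\hat t'_{h_n}\,ds$ converges by dominated convergence combined with the weak-$*$ convergence of $\hat t'_{h_n}$. This yields the $\leq$ direction of \eqref{eq.Mief135}. The reverse inequality is the main obstacle and I would obtain it via the chain rule on $\calI\circ(\hat t,\hat z)$ (valid since $(\hat t,\hat z)\in W^{1,\infty}(0,S;\R\times\calV)$ and $\rmD_z\calI(\hat t(\cdot),\hat z(\cdot))\in L^\infty(0,S;\calV^*)$ by \eqref{eq.Mief150} and lsc), together with the variational inequality $\calR(\hat z'(s))+\langle\rmD_z\calI(\hat t,\hat z),\hat z'\rangle\geq -\norm{\hat z'(s)}_\bbV \dist_{\calV^*}(-\rmD_z\calI(\hat t,\hat z),\partial\calR(0))$ which follows from the definition of the distance function and the positive one-homogeneity of $\calR$. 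Integration then yields the matching lower bound, so equality holds in \eqref{eq.Mief135}.
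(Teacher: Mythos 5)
Your proposal follows essentially the same route as the paper's proof: compactness from the uniform bounds \eqref{eq.Mief124} and Proposition~\ref{prop.basicest}; pointwise weak $\calZ^*$- and $\calV^*$-convergence of $\rmD_z\calI(\underline t_h(s),\bar z_h(s))$ combined with weak lower semicontinuity of the $\calV^*$-distance and a Fatou-type lemma (the paper's Lemma~\ref{app_prop:lsc2}) for the complementarity condition; the semicontinuity result of Proposition~\ref{app_prop:lsc} for the $\leq$ direction of the energy identity; and a chain-rule argument together with the one-sided variational inequality for the reverse inequality (the paper delegates this step to \cite[Lemma 5.2]{KneesRossiZanini}). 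One small misstatement: the quadratic form $z\mapsto\langle Az,z\rangle$ is weakly lower semicontinuous but \emph{not} weakly continuous on $\calZ$, so $\calI(t,\cdot)$ is only weakly sequentially lsc; since you only use $\calI(\hat t(s_1),\hat z(s_1))\leq\liminf_n\calI(\hat t_{h_n}(s_1),\hat z_{h_n}(s_1))$, this does not affect the argument, but the justification should invoke lower semicontinuity rather than continuity.
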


\begin{proof}
 The convergence results in \eqref{eq.Mief126}--\eqref{eq.Mief129} are an 
immediate consequence of the uniform estimates formulated in \eqref{eq.Mief124} 
and in  Proposition \ref{prop.basicest}. 
Clearly, the limit pair $(\hat t,\hat z)$ satisfies the first two relations in 
\eqref{eq.Mief131}. 
In the following we omit the index 
$n$. Observe  further that for all $s\in [0,S]$
\begin{gather*}
 \underline{t}_h(s),\bar t_h(s)\rightarrow \hat t(s),\quad
 \underline{z}_h(s),\bar z_h(s)\rightharpoonup \hat z(s) \text{ weakly in 
}\calZ.
\end{gather*}
Together with the uniform bound \eqref{eq.Mief150}, this implies that for all 
$s$
\begin{align*}
 \rmD_z\calI(\underline{t}_h(s),\bar z_h(s))\rightharpoonup \rmD_z\calI(\hat 
t(s),\hat z(s)) \text{ weakly in }\calZ^*\,\text{ and in }\calV^*. 
\end{align*}
Hence, the following lower semicontinuity estimate 
is valid  for all $s$:
\begin{align*}
 \liminf_{h\to 0}\dist_{\calV^*}(-\rmD_z\calI(\underline{t}_h(s),\bar 
z_h(s)),\partial\calR(0)) \geq 
\dist_{\calV^*}(-\rmD_z\calI(\hat{t}(s),\hat 
z(s)),\partial\calR(0))\,.
\end{align*}
For arbitrary $\alpha<\beta$ we therefore obtain from \eqref{eq.Mief121} with 
Lemma \ref{app_prop:lsc2}
\begin{multline*}
 0\geq\liminf_{h\to 0}\int_\alpha^\beta \hat 
t_h'(s)\dist_{\calV^*}(-\rmD_z\calI(\underline{t}_h(s),\bar 
z_h(s)),\partial\calR(0))\ds
\\
\geq 
\int_\alpha^\beta \hat 
t'(s)\dist_{\calV^*}(-\rmD_z\calI(t(s), 
z(s)),\partial\calR(0))\ds\geq 0,
\end{multline*}
whence the last relation in \eqref{eq.Mief131}. Moreover, by Proposition 
\ref{app_prop:lsc}  we arrive at the following estimate: $\forall s_1\in [0,S]$
\begin{align*}
  \calI&(\hat t(s_1),\hat z(s_1)) 
  + \int_{0}^{s_1} 
\calR(\hat z'(s))  + 
\norm{\hat z'(s)}_\bbV\dist_{\calV^*}(-\rmD_z\calI(\hat t(s), 
\hat z(s)),\partial\calR(0))\ds 
\\
&\leq \liminf_{h\to 0} 
 \left(\calI(\hat t_h(s_1),\hat z_h(s_1)) + \int_{0}^{s_1} 
\calR(\hat z_h'(s))  + 
\norm{\hat z_h'(s)}_\bbV
\dist_{\calV^*}(-\rmD_z\calI(\underline{t}_h(s),\bar 
z_h(s)),\partial\calR(0))\ds \right)
\\
&\overset{\eqref{eq.Miefdisceng}}{\leq}\limsup_{h\to 0} \left( 
\calI(\hat t_h(0),\hat z_h(0))
+ \int_{0}^{s_1} 
  \partial_t\calI(\hat t_h(s),\hat z_h(s))\hat t_h'(s)\ds 
+\int_{0}^{s_1} r_h(s)\ds\right)
\\
&\leq \calI(\hat t(0),\hat z(0))
+ \int_{0}^{s_1} 
  \partial_t\calI(\hat t(s),\hat z(s))\hat t'(s)\ds\,,
\end{align*}
which is \eqref{eq.Mief135} with $\leq$ instead of an equality. 
Here we also used that $\partial_t\calI(\hat t_h(s),\hat z_h(s))$ converges 
pointwise to $\partial_t\calI(\hat t(s),\hat z(s)) $ and is uniformly 
bounded with respect to $h$ and $s$, which implies strong 
$L^1(0,s_1)$-convergence 
of $s\mapsto  \partial_t\calI(\hat t_h(s),\hat z_h(s))$. 
Arguing in  exactly the same way as for instance in the proof of \cite[Lemma 
5.2]{KneesRossiZanini}  the inequality can be replaced by an equality, and 
\eqref{eq.Mief135} is shown.  
\end{proof}

\begin{remark}
The above proof does not guarantee that the limit pair $(\hat t,\hat z)$ is 
nondegenerate meaning that $\hat t'(s) + \norm{\hat z'(s)}_\bbV>0$ for almost 
all $s\in [0,S]$. 
We refer to   \cite{MiEf06,MielkeZelik_ASNPCS14} for a discussion of 
nondegeneracy conditions in an abstract setting and to \cite{NegriKnees17} for 
a discussion in the context of a damage model. 

The solution obtained by the local minimization algorithm belongs to  the 
class of parametrised BV-solutions, see Proposition \ref{int:propeq} and 
Definition \ref{int:defBVsol}.  
The example in Section \ref{sec:examples} reveals that parametrised 
BV-solutions obtained by vanishing viscosity approximations may differ from 
those obtained by the local minimization algorithm. 
\end{remark}

\section{An approximation scheme relying on relaxed local minimization}
\subsection{Convergence with an unbounded sequence of penalty parameters}
\label{sec:solombrino}

We briefly recall the setting of Section \ref{suse:infiniteloc}:  
\begin{subequations}
 \label{sol:assumptions}
\begin{align}
 &\text{The spaces }\calX,\calV,\calZ \text{ satisfy \eqref{eq.Mief000}},
\label{sol:assumptionsa}\\
&\text{the functional $\calI:[0,T]\times \calZ\to\R$ is given by 
\eqref{eq.Mief0002} with  operators $A,\bbV$ as in \eqref{eq.Mief0001},}
 \label{sol:assumptionsb}\\
 &\text{ $\ell,\calF$ satisfy \eqref{eq.Mief00}, 
  \eqref{ass.F01} and \eqref{ass.fweakconv},}
  \label{sol:assumptionsbb}\\
  &\text{$\calR:\calX\to[0,\infty)$ is convex, lower semicontinuous, pos.\ 
one-homogeneous and satisfies \eqref{eq.Mief100},}
\label{sol:assumptionsc}\\
 &\text{$z_0\in \calZ$ satisfies $\rmD_z\calI(0,z_0)\in\calV^*$}.
 \label{sol:assumptionsd}
\end{align}
\end{subequations}

The following variant of a procedure proposed in \cite{ACFS-M3AS17} will 
be 
analysed:  

Given $N\in \N$, a time-step size $\tau=T/N$, a parameter $\eta>0$ and an 
initial datum $z_0\in \calZ$ we define for $1\leq k\leq N$ and $i\in \N_0$: 
$t_k=k\tau$, 
$z_{k,0}:=z_{k-1}$ and for $i\geq 1$
\begin{align}
 z_{k,i} &\in \Argmin\Set{\calI(t_k,v) 
+\frac{\eta}{2}\norm{v-z_{k,i-1}}_{\bbV}^2 + \calR(v-z_{k,i-1})}{v\in \calZ},
\label{sol:eq001}\\
z_k:=z_{k,\infty}&:=\lim_{i\to \infty} z_{k,i}\quad\text{(weak limit in 
$\calZ$)}.
\label{sol:eq002}
\end{align}
\begin{remark} 
This approximation scheme can be interpreted as a relaxed version of the scheme 
discussed in Section \ref{suse:infiniteloc} where the constraint 
$\norm{v-z_{k-1}^h}_\bbV\leq h$ is replaced with the additional term 
$\frac{\eta}{2}\norm{v-z_{k,i-1}}_\bbV^2$, where $\eta$ plays the role of a 
penalty parameter. This scheme is a variation of a 
procedure suggested in \cite[Section 3.1]{ACFS-M3AS17}. There, instead of 
$\calR(v - z_{k,i-1})$
 the term $\calR(v-z_{k-1})$ is used 
in  \eqref{sol:eq001}. 
Different from  \cite[Section 3.1]{ACFS-M3AS17}  we 
can prove  that the sequence $(z_{k,i})_{i\in \N}$ itself  
converges, see Proposition \ref{prop:sol001} here below. For a more detailed  
comparison with the results from \cite{ACFS-M3AS17}, we 
refer to Section \ref{suse:sol-fixed}. 
\end{remark}
In a first step we discuss the behavior of \eqref{sol:eq001}--\eqref{sol:eq002} 
for fixed $k$.  
Let $\calH(t,v,w):=\calI(t,v) +\frac{\eta}{2}\norm{v-w}^2_\bbV +\calR(v-w)$. 
For $t\in [0,T]$, $z_0\in \calZ$ and $i\geq 1$ let
\begin{align}
 \label{sol:eq003}
 z_i\in \Argmin\Set{\calH(t,v,z_{i-1})}{v\in \calZ}.
\end{align}
Clearly, minimizers exist and we have the following estimates for all $i\geq 1$:
\begin{align}
\label{sol:eq004}
 \calH(t,z_{i+1},z_i) \leq \calH(t,z_i,z_i)=\calI(t,z_i)
 \leq \calH(t,z_i,z_{i-1})\leq \calI(t,z_{i-1}),
\end{align}
i.e., the sequences $(\calH(t,z_i,z_{i-1}))_{i\geq 1}$ and 
$(\calI(t,z_i))_{i\geq 0}$ are non-increasing. Due to the coercivity of 
$\calI$, they are bounded from below. Hence, there exists $I_\infty\in \R$ such 
that
\begin{align}
\lim_{i\to\infty}\calI(t,z_i)=I_\infty=\lim_{i\to\infty}\calH(t,z_{i},z_{i-1}),
\end{align}
which implies that 
\begin{align}
 \lim_{i\to\infty} \calR(z_i-z_{i-1})=0,\quad \lim_{i\to\infty} 
\norm{z_i-z_{i-1}}_\bbV=0.
\end{align}
Moreover, summing up the left part of estimate \eqref{sol:eq004} with respect 
to $i$ one arrives at 
\begin{align}
\label{sol:eq007} 
 \calI(t,z_{i}) +\sum_{j=s}^{i-1} \left(\calR(z_{j+1} - z_j) + 
\frac{\eta}{2}\norm{z_{j+1}-z_j}_\bbV^2\right) \leq \calI(t,z_s),
\end{align}
which is valid for all $0\leq s \leq i$. 
\begin{proposition}
 \label{prop:sol001}
 Assume \eqref{sol:assumptionsa}--\eqref{sol:assumptionsc}. 
 \\
 There exists a constant $C>0$ (possibly depending on $t$ and $z_0$ but 
independent of $\eta$) such that 
 \begin{align}
  \sup_{i\in \N}\norm{z_i}_\calZ&\leq C,
  \label{sol:eq005} \\
   \sum_{j=0}^\infty \left(\calR(z_{j+1} - z_j) + 
\frac{\eta}{2}\norm{z_{j+1}-z_j}_\bbV^2\right) &\leq C. 
\label{sol:eq006}
 \end{align}
Moreover, there exists $z_\infty\in\calZ$ such that the  sequence 
$(z_i)_{i\in \N}$ converges to $z_\infty$ weakly in $\calZ$. The limit 
$z_\infty$ satisfies $\rmD_z\calI(t,z_\infty)\in \calV^*$ and 
\begin{align}
\label{sol:eq008}
 0\in \partial\calR(0) +\rmD_z\calI(t,z_\infty).
\end{align}
\end{proposition}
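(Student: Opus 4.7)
The plan is to extract everything from the monotonicity estimate \eqref{sol:eq007} that has already been established before the statement. Setting $s=0$ and using $\calI(t,z_0)<\infty$ together with the coercivity of $\calI$ that follows from \eqref{eq.Mief0001} and \eqref{eq.Mief00} (namely $\calF\ge 0$, $A$ is $\calZ$-elliptic, and $\ell(t)\in\calV^*$ is bounded) immediately yields the uniform $\calZ$-bound \eqref{sol:eq005}; sending $i\to\infty$ and using that $\calI(t,z_i)$ is bounded below gives the summability \eqref{sol:eq006}. Crucially, neither bound depends on $\eta$.

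To upgrade the mere weak subsequential compactness afforded by \eqref{sol:eq005} to convergence of the full sequence, I would invoke \eqref{eq.Mief100}: the lower bound $c\|v\|_\calX\le\calR(v)$ combined with \eqref{sol:eq006} implies $\sum_{j=0}^\infty\|z_{j+1}-z_j\|_\calX<\infty$, so $(z_i)$ is Cauchy, hence strongly convergent in $\calX$ to some $z_\infty$. Since $\calZ\hookrightarrow\calV\hookrightarrow\calX$ continuously, every weak $\calZ$-accumulation point of $(z_i)$ must coincide with this strong $\calX$-limit, and therefore the whole sequence satisfies $z_i\rightharpoonup z_\infty$ in $\calZ$.

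For the stationarity \eqref{sol:eq008} I would pass to the limit in the Euler--Lagrange inclusion for the minimization problem \eqref{sol:eq003}. The sum rule (smooth quadratic plus convex one-homogeneous) yields
\[
 0 \in \rmD_z\calI(t,z_i) + \eta\,\bbV(z_i-z_{i-1}) + \partial\calR(z_i-z_{i-1}),
\]
and by positive $1$-homogeneity of $\calR$, $\partial\calR(z_i-z_{i-1})\subset\partial\calR(0)$. Thus $\xi_i:=-\rmD_z\calI(t,z_i)-\eta\,\bbV(z_i-z_{i-1})\in\partial\calR(0)$. From \eqref{sol:eq006}, $\|z_i-z_{i-1}\|_\bbV\to 0$, so the viscous term $\eta\,\bbV(z_i-z_{i-1})\to 0$ in $\calV^*$ (for fixed $\eta$). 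Using \eqref{ass.fweakconv} to pass to the limit in the $\calF$ contribution, together with the $\calV^*$-boundedness of $\rmD_z\calF(z_i)$ supplied by \eqref{ass.F01} and \eqref{sol:eq005}, one obtains $\rmD_z\calI(t,z_i)\rightharpoonup \rmD_z\calI(t,z_\infty)$ weakly in $\calV^*$, and hence $\xi_i\rightharpoonup -\rmD_z\calI(t,z_\infty)$ in $\calV^*$.

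The step I expect to require the most care is the final identification $-\rmD_z\calI(t,z_\infty)\in\partial\calR(0)$: although $\partial\calR(0)$ is a convex, strongly closed subset of $\calX^*$, one needs its closedness under weak $\calV^*$-limits. This follows because the defining inequality $\langle\xi,v\rangle\le\calR(v)$ passes to weak $\calV^*$-limits on $v\in\calV$, while the $\calX$-bound $|\langle\xi,v\rangle|\le C\|v\|_\calX$ valid on $\partial\calR(0)$ allows the limit functional to be (uniquely) extended by density to $\calX$. This simultaneously delivers the regularity $\rmD_z\calI(t,z_\infty)\in\calV^*$ and the inclusion \eqref{sol:eq008}.
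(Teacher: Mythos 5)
Your route matches the paper's in every essential step (monotone energy recursion $\Rightarrow$ bounds, $\calR$-lower bound $\Rightarrow$ Cauchy in $\calX$ $\Rightarrow$ whole-sequence weak $\calZ$-limit, Euler--Lagrange inclusion $\Rightarrow$ pass to the limit, weak $\calV^*$-closedness of $\partial\calR(0)$ $\Rightarrow$ conclusion). However, the sentence ``one obtains $\rmD_z\calI(t,z_i)\rightharpoonup\rmD_z\calI(t,z_\infty)$ weakly in $\calV^*$'' is not justified by the two ingredients you list. The weak-weak continuity \eqref{ass.fweakconv} and the $\calV^*$-bound from \eqref{ass.F01} only control the $\rmD_z\calF$ part; the quadratic part gives $Az_i$, and since $A\in\Lin(\calZ,\calZ^*)$ one only has $Az_i\rightharpoonup Az_\infty$ weakly in $\calZ^*$ a priori. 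To upgrade this to a $\calV^*$-statement (which is where the claimed regularity $\rmD_z\calI(t,z_\infty)\in\calV^*$ really comes from) you must first bound $Az_i$ in $\calV^*$, and the only source of such a bound is the very relation $\xi_i\in\partial\calR(0)$ together with \eqref{eq.Mief100}: since $\partial\calR(0)$ is bounded in $\calV^*$, $\eta\bbV(z_i-z_{i-1})\to 0$ in $\calV^*$, and $\rmD_z\calF(z_i),\ \ell(t)$ are bounded in $\calV^*$, one gets $Az_i=-\xi_i-\eta\bbV(z_i-z_{i-1})-\rmD_z\calF(z_i)+\ell(t)$ bounded in $\calV^*$, and then $Az_i\rightharpoonup Az_\infty$ weakly in $\calV^*$ by identifying the $\calV^*$-weak subsequential limit with the already-known $\calZ^*$-weak limit.

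You do note, in your final paragraph, that elements of $\partial\calR(0)$ satisfy an $\calX$-bound and hence lie in $\calV^*$; but you use this only to establish closedness of $\partial\calR(0)$ under weak $\calV^*$-limits. As laid out, your derivation of the $\calV^*$-weak convergence of $\rmD_z\calI(t,z_i)$ (which is needed \emph{before} you can speak of a weak $\calV^*$-limit of $\xi_i$) silently presupposes a $\calV^*$-bound on $\xi_i$, so the argument is slightly circular in presentation. Reorder so that the $\calV^*$-bound on $\xi_i$ comes first and feeds the $Az_i$ bound, exactly as the paper does, and the proof is complete.
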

\begin{proof}
Estimates \eqref{sol:eq005}--\eqref{sol:eq006} follow from \eqref{sol:eq007} 
(with $s=0$) and the coercivity of $\calI$. Since $\calR$ is convex and 
positively 
homogeneous of degree one and hence satisfies a triangle inequality, 
together with \eqref{eq.Mief100} and \eqref{sol:eq007} it follows that for 
$s\leq i$
\begin{align*}
 c\norm{z_{i}- z_s}_\calX\leq \calR(z_i-z_s)\leq \sum_{j=s}^{i-1} 
\calR(z_{j+1}- z_j) \leq \calI(t,z_s)-\calI(t,z_i).
\end{align*}
Since the sequence $(\calI(t,z_j))_{j\in \N}$ is converging, this estimate 
shows that $(z_j)_{j\in \N}$ is a Cauchy sequence in  the Banach space $\calX$. 
Together with \eqref{sol:eq005} we obtain the  convergence of the 
sequence $(z_j)_{j\in \N}$ to some $z_\infty$ weakly in $\calZ$.

In order to obtain \eqref{sol:eq008} observe that for every $i\geq 1$ we have 
\begin{align}
\label{sol:eq013}
 -\rmD_z\calI(t,z_i) -\eta\bbV(z_i - z_{i-1})\in \partial\calR(z_i - z_{i-1}) 
\subset\partial\calR(0),
\end{align}
where the last inclusion again follows from the one-homogeneity of $\calR$. 
This inclusion is valid in both spaces, in $\calZ^*$ and in 
$\calV^*$, thanks to the upper estimate for $\calR$ in 
\eqref{eq.Mief100}. 
Since by the assumptions the operator 
$A$ and  $\rmD_z\calF:\calZ\to\calZ^*$ 
are 
weak-weak-continuous, it follows that 
\begin{align}
\label{eq:sol600}
 \rmD_z\calI(t,z_i) +\eta\bbV(z_i - z_{i-1})\rightharpoonup 
\rmD_z\calI(t,z_\infty)
 \quad \text{weakly in $\calZ^*$.}
\end{align}
 Moreover, thanks to \eqref{ass.F01} and since $\partial\calR(0)$ is a  
bounded  subset of $\calV^*$, the sequences  
$(\rmD_z\calF(z_i))_{i\in \N}$ 
and  $( \rmD_z\calI(t,z_i) + \eta\bbV (z_i - 
z_{i-1}))_{i\in \N}$ 
are bounded in $\calV^*$. Together with \eqref{ass.fweakconv} and 
\eqref{eq:sol600} this implies that 
$\rmD_z\calF(z_i)\rightharpoonup \rmD_z\calF(z_\infty)$ weakly in $\calV^*$ 
and ultimately $Az_i\rightharpoonup A z_\infty$ 
weakly in $\calV^*$. Since $\partial\calR(0)$ is  weakly closed in $\calV^*$ 
one finally obtains  \eqref{sol:eq008}. 
\end{proof}
The next aim is to derive uniform estimates for the sequences 
$(z_{k,i}^\tau)_{0\leq k\leq N,i\in \N\cup\{0,\infty\}}$ generated by 
the full scheme 
\eqref{sol:eq001}--\eqref{sol:eq002}. 
Observe first that with $z_{k,0}^\tau=z_{k-1}^\tau= z_{k-1,\infty}^\tau$ we 
have 
\[
 \calI(t_k,z_{k,0}^\tau)=\calI(t_k,z_{k-1}^\tau)=\calI(t_{k-1},z_{k-1}^\tau) - 
\int_{t_{k-1}}^{t_k} \langle \dot \ell(t),z_{k-1}^\tau\rangle\dt.
\]
Hence, summing up \eqref{sol:eq007} with respect to $k$ and $i$ yields
\begin{align}
\label{sol:eq009}
 \calI(t_k,z_{k,i+1}^\tau) +\sum_{j=0}^i 
\calR_\eta(z_{k,j+1}^\tau-z_{k,j}^\tau) 
 +\sum_{s=1}^{k-1}\sum_{j=0}^\infty \calR_\eta(z_{s,j+1}^\tau-z_{s,j}^\tau)
 \leq \calI(t_0,z_0) 
 - 
 \sum_{l=1}^k\int_{t_{l-1}}^{t_l} \langle\dot\ell(t),z_{l-1}^\tau\rangle\dt, 
\end{align}
which is valid for all $\tau=T/N>0$, $\eta>0$, $1\leq k\leq N$, $i\in 
\N\cup\{0,\infty\}$. Here, we use the short-hand notation
$\calR_\eta(v)=\calR(v) +\frac{\eta}{2}\norm{v}^2_\bbV$. 

\begin{proposition}
 \label{prop:sol002}
 Assume \eqref{sol:assumptionsa}--\eqref{sol:assumptionsc}.
 
 There exists a constant $C>0$ such that  for all $\eta>0$,  $N\in \N$, 
$\tau=T/N$, $0\leq 
k\leq N$, $i\in \N\cup\{0,\infty\}$ we have  
 \begin{align}
 \label{sol:eq010}
 \norm{z_{k,i}^\tau}_\calZ &\leq C,\\
 \sum_{s=1}^{N}\sum_{j=0}^\infty
 \left(\calR(z_{s,j+1}^\tau-z_{s,j}^\tau) 
+\frac{\eta}{2}\norm{z_{s,j+1}^\tau-z_{s,j}^\tau}_\bbV^2 \right)&\leq C.
\label{sol:eq011}
 \end{align}
\end{proposition}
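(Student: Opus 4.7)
The plan is to derive the two bounds iteratively from the summed monotonicity estimate \eqref{sol:eq009}, using the time-slope bound \eqref{eq.Mief02} (equivalently the Grönwall form \eqref{eq.Mief08}) to control the jumps $\calI(t_k,z_{k-1}^\tau)-\calI(t_{k-1},z_{k-1}^\tau)$ coming from shifting the time argument. The key observation is that for each fixed $k$, Proposition \ref{prop:sol001} (applied at $t=t_k$ with the initial value $z_{k-1}^\tau$) already yields the inner chain $\calI(t_k,z_{k,i+1}^\tau)\leq \calI(t_k,z_{k-1}^\tau)$ together with the telescoping dissipation bound, and by weak lower semicontinuity of $v\mapsto\calI(t_k,v)$ (which follows from the $\calZ$-ellipticity of $A$ together with \eqref{ass.fweakconv}) the same estimate persists in the limit $i\to\infty$:
\[
 \calI(t_k,z_{k,\infty}^\tau)\leq \calI(t_k,z_{k-1}^\tau).
\]

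First I would combine this with \eqref{eq.Mief08} to bridge the two time levels. Since $z_{k-1}^\tau=z_{k-1,\infty}^\tau$, we have
\[
 \calI(t_k,z_{k-1}^\tau)+c\leq \bigl(\calI(t_{k-1},z_{k-1,\infty}^\tau)+c\bigr)e^{\mu\tau},
\]
and hence
\[
 \calI(t_k,z_{k,\infty}^\tau)+c\leq \bigl(\calI(t_{k-1},z_{k-1,\infty}^\tau)+c\bigr)e^{\mu\tau}.
\]
Iterating this estimate over $k$ (with the telescoping factor $e^{\mu k\tau}\leq e^{\mu T}$) gives a bound on $\calI(t_k,z_{k,\infty}^\tau)$, and since $\calI(t_k,z_{k,i}^\tau)\leq \calI(t_k,z_{k-1}^\tau)$ the same bound is inherited by every inner iterate:
\[
 \sup_{N,\eta,\,0\leq k\leq N,\,i\in\N_0\cup\{\infty\}}\bigl(\calI(t_k,z_{k,i}^\tau)+c\bigr)\leq (\calI(0,z_0)+c)e^{\mu T}.
\]
Then \eqref{sol:eq010} follows from the $\calZ$-coercivity of $\calI$ obtained from \eqref{eq.Mief0001}, $\calF\geq 0$, and a Young-inequality absorption of $\langle\ell(t),z\rangle$.

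Second, I would insert this uniform bound back into \eqref{sol:eq009}, taken at $k=N$ and $i\to\infty$. The left-hand side contains the full double sum we want to estimate plus $\calI(t_N,z_{N,\infty}^\tau)$, which is bounded below by a universal constant (again by coercivity). On the right-hand side, the term $-\sum_{l}\int_{t_{l-1}}^{t_l}\langle\dot\ell(t),z_{l-1}^\tau\rangle\dt$ is controlled by $\|\dot\ell\|_{C([0,T];\calV^*)}\,T\sup_l\|z_{l-1}^\tau\|_\calV$, which is finite thanks to $\calZ\subset\calV$ and \eqref{sol:eq010}. Rearranging yields \eqref{sol:eq011}.

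The only nontrivial issue is the passage $i\to\infty$ in the Grönwall step, which is why I would justify it via the weak lower semicontinuity of $\calI(t_k,\cdot)$ on $\calZ$ together with the weak convergence $z_{k,i}^\tau\rightharpoonup z_{k,\infty}^\tau$ in $\calZ$ provided by Proposition \ref{prop:sol001}; everything else is a direct combination of \eqref{sol:eq007}, \eqref{sol:eq009}, and the bounds \eqref{eq.Mief02}, \eqref{eq.Mief08}. I do not expect any genuine obstacle beyond keeping track of constants so that the final bounds depend only on $\calI(0,z_0)$, $T$, $\mu$, $c$ and $\|\ell\|_{C^1([0,T];\calV^*)}$, and in particular are independent of $N$, $\tau$, $\eta$, $k$ and $i$.
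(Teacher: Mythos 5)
Your proposal is correct and follows essentially the same route as the paper: a discrete Gr\"onwall argument based on the summed inequality \eqref{sol:eq009} and the bound \eqref{eq.Mief08}, combined with the $\calZ$-coercivity of $\calI$. The only organizational difference is that you first extract the uniform energy bound (dropping the dissipation sum during the iteration) and then feed it back into \eqref{sol:eq009} to recover \eqref{sol:eq011}, whereas the paper cites \cite[Section 2.1.2]{MieRou15} to obtain both bounds in a single Gr\"onwall pass; this is a cosmetic distinction, and your explicit justification for taking $i\to\infty$ via weak lower semicontinuity of $\calI(t_k,\cdot)$ makes a step precise that the paper leaves implicit.
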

\begin{proof}
 From \eqref{sol:eq009} with $i=\infty$ and $1\leq k\leq N$ one obtains similar 
to \cite[Section 2.1.2]{MieRou15} the estimate
\[
 \calI(t_k,z_k^\tau) +\sum_{s=1}^k\sum_{j=0}^\infty \calR_\eta(z_{k,j+1}^\tau 
-z_{k,j}^\tau) \leq (c + \calI(0,z_0))e^{\mu T}
\]
with $\mu,c\geq 0$ independently of $\tau,k,\eta$. Together with the  
coercivity of $\calI$ this yields \eqref{sol:eq010} for $i=\infty$ and 
\eqref{sol:eq011}. Exploiting again \eqref{sol:eq009} for arbitrary $i$ 
leads to \eqref{sol:eq010} for every $i\in \N\cup\{0,\infty\}$.
\end{proof}

Like in Section \ref{suse:infiniteloc}, from the data generated by 
\eqref{sol:eq001}--\eqref{sol:eq002} we construct interpolating curves in an 
arc-length parametrised setting. However, due to slight differences in the 
estimates that we find for the $(z_{k,i}^\tau)$, the interpolating curves will 
be constructed in the spirit of \cite{NegriKnees17}. 
For that purpose we first derive an analogue of Proposition 
\ref{prop.finitelength-inf} guaranteeing that the lengths of the interpolating 
curves will be uniformly bounded. 

\begin{proposition}
\label{prop:sol003}
 Assume \eqref{sol:assumptionsa}--\eqref{sol:assumptionsd} and 
let 
\[
 \gamma_{k}^\tau:= \sum_{i=0}^\infty
 \norm{z_{k,i+1}^\tau - 
z_{k,i}^\tau}_\calZ
.
\]
There exists a constant $C>0$ such that for all $N\in \N$, $\tau=T/N$ and 
$\eta>0$ we have 
\begin{align}
 \label{sol:eq012}
 \sum_{k=1}^N\gamma_{k}^\tau \leq C
 \left(T\norm{\ell}_{C^1([0,T],\calV^*)} + \norm{\rmD_z\calI(0,z_0)}_\bbV 
+\sum_{k=1}^N\sum_{i=0}^\infty \calR(z_{k,i+1}^\tau - z_{k,i}^\tau)\right).
\end{align}
Thanks to Proposition \ref{prop:sol002}, the right hand side is uniformly 
bounded. Moreover, there exists a constant $C>0$ such that for all $N\in \N$, 
$\eta>0$, $1\leq k\leq N$,$i\geq 0$:
\begin{align}
\label{eq.sol502}
 \eta\norm{z_{k,i+1}^\tau - z_{k,i}^\tau}_\bbV &\leq C,
 \\
\norm{\rmD_z\calI(t_k^\tau,z_{k,i}^\tau)}_{\calV^*}&\leq C\,.
\label{eq:sol601} 
\end{align}
 \end{proposition}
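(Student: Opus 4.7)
The plan is to follow the same strategy as in the proof of Proposition \ref{prop.finitelength-inf}, but adapted to the penalized scheme where the hard constraint $\norm{v-z_{k-1}^h}_\bbV\le h$ is replaced by the quadratic penalty. The key ingredient is the Euler--Lagrange inclusion \eqref{sol:eq013} combined with a convex-analysis comparison between successive iterates.

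First, I would write the optimality condition at step $i+1$ as an identity (using the one-homogeneity and Fenchel's identity, cf.~Lemma \ref{app:lem1}):
\[
\calR(z_{k,i+1}^\tau-z_{k,i}^\tau) = -\langle \rmD_z\calI(t_k,z_{k,i+1}^\tau)+\eta\bbV(z_{k,i+1}^\tau-z_{k,i}^\tau),\,z_{k,i+1}^\tau-z_{k,i}^\tau\rangle,
\]
and the inclusion at step $i$ (which only tells us the element lies in $\partial\calR(0)$) as the inequality obtained by testing with $v=z_{k,i+1}^\tau-z_{k,i}^\tau$:
\[
\calR(z_{k,i+1}^\tau-z_{k,i}^\tau) \geq -\langle \rmD_z\calI(t_k,z_{k,i}^\tau)+\eta\bbV(z_{k,i}^\tau-z_{k,i-1}^\tau),\,z_{k,i+1}^\tau-z_{k,i}^\tau\rangle.
\]
Subtracting gives (after rearrangement)
\[
\langle \rmD_z\calI(t_k,z_{k,i+1}^\tau)-\rmD_z\calI(t_k,z_{k,i}^\tau),z_{k,i+1}^\tau-z_{k,i}^\tau\rangle + \eta\norm{z_{k,i+1}^\tau-z_{k,i}^\tau}_\bbV^2 \leq \eta\langle \bbV(z_{k,i}^\tau-z_{k,i-1}^\tau),z_{k,i+1}^\tau-z_{k,i}^\tau\rangle.
\]
Expanding $\rmD_z\calI=Az+\rmD\calF(z)-\ell$, using the $\calZ$-ellipticity of $A$ and applying Lemma \ref{lem.estDF} (with $\rho$ fixed by \eqref{sol:eq010} and $\varepsilon=\alpha/2$), I then divide by $\norm{z_{k,i+1}^\tau-z_{k,i}^\tau}_\bbV$ and use $\norm{\cdot}_\bbV\leq\norm{\cdot}_\calZ$ to obtain the telescopic estimate
\[
\tfrac{\alpha}{2}\norm{z_{k,i+1}^\tau-z_{k,i}^\tau}_\calZ + \eta\norm{z_{k,i+1}^\tau-z_{k,i}^\tau}_\bbV - \eta\norm{z_{k,i}^\tau-z_{k,i-1}^\tau}_\bbV \leq C\calR(z_{k,i+1}^\tau-z_{k,i}^\tau),
\]
valid for $i\geq 1$. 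Summing in $i$ from $1$ to $\infty$ (the boundary term at $\infty$ vanishes because of \eqref{sol:eq011}) collapses the penalty terms to $+\eta\norm{z_{k,1}^\tau-z_{k,0}^\tau}_\bbV$ on the right.

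For the initial step $i=0$ in each time slab I need a separate argument. For $k\geq 2$ I use that the previous limit satisfies $-\rmD_z\calI(t_{k-1},z_{k-1}^\tau)\in\partial\calR(0)$ by Proposition \ref{prop:sol001}, which gives $\calR(z_{k,1}^\tau-z_{k,0}^\tau)\geq -\langle \rmD_z\calI(t_{k-1},z_{k,0}^\tau),z_{k,1}^\tau-z_{k,0}^\tau\rangle$; subtracting from the identity at $i=1$ and estimating the time-increment of $\ell$ by $\tau\norm{\ell}_{C^1}$ yields
\[
\tfrac{\alpha}{2}\norm{z_{k,1}^\tau-z_{k,0}^\tau}_\calZ + \eta\norm{z_{k,1}^\tau-z_{k,0}^\tau}_\bbV \leq C\calR(z_{k,1}^\tau-z_{k,0}^\tau) + C\tau\norm{\ell}_{C^1([0,T];\calV^*)}.
\]
For $k=1$, where $z_{1,0}^\tau=z_0$ need not be a critical point, I instead insert $\pm\rmD_z\calI(0,z_0)$ and use the assumption \eqref{sol:assumptionsd} so that the RHS picks up an additional term $\norm{\rmD_z\calI(0,z_0)}_{\calV^*}$.

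Adding the initial-step bound to the telescoped sum for $i\geq 1$ and then summing over $k=1,\dots,N$ produces \eqref{sol:eq012}, since $\sum_{k=1}^N\tau=T$. The remaining estimates are short consequences: iterating the one-step recursion for the penalty terms yields
\[
\eta\norm{z_{k,i+1}^\tau-z_{k,i}^\tau}_\bbV \leq \eta\norm{z_{k,1}^\tau-z_{k,0}^\tau}_\bbV + C\sum_{j=1}^{i}\calR(z_{k,j+1}^\tau-z_{k,j}^\tau),
\]
and both terms on the right are uniformly bounded by the $k=1$ bound together with Proposition \ref{prop:sol002}, which gives \eqref{eq.sol502}. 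Finally, \eqref{eq:sol601} follows by rewriting the inclusion \eqref{sol:eq013} as $\rmD_z\calI(t_k^\tau,z_{k,i}^\tau)=-\eta\bbV(z_{k,i}^\tau-z_{k,i-1}^\tau)-\xi$ with $\xi\in\partial\calR(0)$, noting that $\partial\calR(0)$ is bounded in $\calV^*$ thanks to $\calX^*\hookrightarrow\calV^*$ and \eqref{eq.Mief100}, and invoking \eqref{eq.sol502} to control the penalty part. The main technical obstacle is the bookkeeping at $i=0$: one must carefully distinguish between $k=1$ and $k\geq 2$ in order to avoid absorbing a factor depending on $\eta$ or on the previous slab's residual into the constants.
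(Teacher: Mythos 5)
Your proposal is correct and follows essentially the same route as the paper's proof: deriving a monotonicity inequality from the Euler--Lagrange inclusions at two consecutive steps (you do this via Fenchel equality plus the $\partial\calR(0)$ inequality, the paper via monotonicity of $\partial\calR$ directly --- these are the same step), estimating the nonlinearity via Lemma \ref{lem.estDF}, telescoping the $\eta\norm{\cdot}_\bbV$ terms in $i$, handling the $i=0$ case separately with the critical-point property \eqref{sol:eq008} for $k\geq 2$ and the assumption on $z_0$ for $k=1$, and finally summing over $k$. The derivations of \eqref{eq.sol502} and \eqref{eq:sol601} are likewise in line with the paper's argument.
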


\begin{proof}
 The arguments to prove Proposition \ref{prop:sol003} are similar to those in 
the proof of Proposition \ref{prop.finitelength-inf}. Let $1\leq k\leq N$ and $ 
1\leq i <\infty$. 
In the 
following we omit the index $\tau$. 
Exploiting the one-homogeneity of $\calR$, the  inclusion \eqref{sol:eq013} 
implies that 
\begin{align}
\label{sol:eq015}
 0\geq \langle -(\rmD_z\calI(t_k,z_{k,i}) + \eta\bbV(z_{k,i} - z_{k,i-1})) 
-(-(\rmD_z\calI(t_k,z_{k,i+1}) + \eta\bbV(z_{k,i+1} - 
z_{k,i}))),z_{k,i+1}-z_{k,i}\rangle,
\end{align}
which can be rewritten as
\begin{multline}
\label{sol:eq111}
 \eta\norm{z_{k,i+1}-z_{k,i}}^2_\bbV - 
\eta\langle\bbV(z_{k,i}-z_{k,i-1}),z_{k,i+1}-z_{k,i}\rangle +\langle 
A(z_{k,i+1}-z_{k,i}),(z_{k,i+1}-z_{k,i})\rangle 
\\
\leq 
\langle\rmD_z\calF(z_{k,i}) -\rmD_z\calF(z_{k,i+1}),z_{k,i+1}-z_{k,i}\rangle.
\end{multline}
This is exactly \eqref{eq.Mief115}  if one identifies $\eta$ with 
$\lambda_k^h$ and $\lambda_{k+1}^h$. Here, $i$ plays the role of $k$ in 
\eqref{eq.Mief115}.   
Transferring the arguments leading to \eqref{eq.Mief118} to the present 
setting results in ($k\geq 1$, $i\geq 1$)
\begin{align}
\label{sol:eq016}
 \eta\norm{z_{k,i+1}-z_{k,i}}_{\bbV} 
 + \frac{\alpha}{2}\sum_{j=1}^i\norm{z_{k,j+1}-z_{k,j}}_\calZ 
 \leq \eta \norm{z_{k,1} - z_{k,0}}_\bbV + c_\alpha 
\sum_{j=1}^i\calR(z_{k,j+1}-z_{k,j}). 
\end{align}
It remains to estimate the term $\norm{z_{k,1}-z_{k,0}}_\bbV$. Testing the 
first  
inclusion of \eqref{sol:eq013} written for $i=1$ with $z_{k,1}-z_{k,0}$ yields:
\begin{align}
\label{sol:eq017}
  \calR(z_{k,1}-z_{k,0}) = -(\langle \rmD_z\calI(t_k,z_{k,1}), 
z_{k,1}-z_{k,0}\rangle 
+ \eta\norm{z_{k,1}-z_{k,0}}_\bbV^2). 
\end{align}
If $k\geq 2$, then by \eqref{sol:eq008} we have (since 
$z_{k-1,\infty}=z_{k,0}$) $
 0\in\partial\calR(0)+\rmD_z\calI(t_{k-1},z_{k,0})$, 
and thus
\[
 \calR(z_{k,1}-z_{k,0})\geq 
\langle-\rmD_z\calI(t_{k-1},z_{k,0}),z_{k,1}-z_{k,0}\rangle.
\]
Subtracting \eqref{sol:eq017} from this estimate leads to \eqref{sol:eq015} 
and we finally obtain \eqref{sol:eq016} also with $j=0$ and the additional 
term $+c\tau\norm{\ell}_{C^1([0,T],\calV^*)}$ on the right hand side with a 
constant $c$ that is independent of $\eta,N,k$. If $k=1$, then adding  
$-\langle \rmD_z\calI(t_1,z_0),z_{1,1}-z_{1,0}\rangle$ to both sides of 
\eqref{sol:eq017} and rearranging the terms results in
\begin{multline*}
 \calR(z_{1,1}-z_0) + \langle\rmD_z\calI(t_1,z_{1,1}) - 
\rmD_z\calI(t_1,z_0),z_{1,1}-z_0\rangle + \eta \norm{z_{1,1}-z_0}^2_\bbV 
\\
= -\langle\rmD_z\calI(t_1,z_0),z_{1,1}-z_0\rangle
=-\langle\rmD_z\calI(t_0,z_0),z_{1,1}-z_0\rangle 
+\langle\ell(t_0)-\ell(t_1),z_0\rangle.
\end{multline*}
Hence, similar arguments as those leading to \eqref{eq.Mief148} can be applied. 
We finally obtain  
\begin{multline*}
 \eta\norm{z_{k,i+1}-z_{k,i}}_{\bbV} 
 + \sum_{j=0}^i\norm{z_{k,j+1}-z_{k,j}}_\calZ 
 \\
 \leq C\left( \delta_{k,1}\norm{\rmD_z\calI(0,z_0)}_\bbV
 + \tau \norm{\ell}_{C^1([0,T],\calV^*)} + 
\sum_{j=0}^i\calR(z_{k,j+1}-z_{k,j})\right), 
\end{multline*}
which is valid for all $k\geq 1$, $i\geq 0$. Here, 
 $\delta_{k,j}$ denotes the Kronecker symbol, 
and the constant 
$C$ is independent of $\eta,N,k,i$. Summing up with respect to $k$ gives 
\eqref{sol:eq012} and \eqref{eq.sol502}. 
From the inclusion \eqref{sol:eq013}, the uniform estimate  \eqref{eq.sol502} 
and the assumptions on $\rmD_z\calF$ we deduce that $Az_{k,i}^\tau$ is 
uniformly bounded in $\calV^*$, which implies \eqref{eq:sol601}. 
\end{proof}
\begin{figure}[t]
\setlength{\unitlength}{0.8cm}
 \begin{picture}(10,3)
 \put(0,0.5){\includegraphics[width=11.2cm]{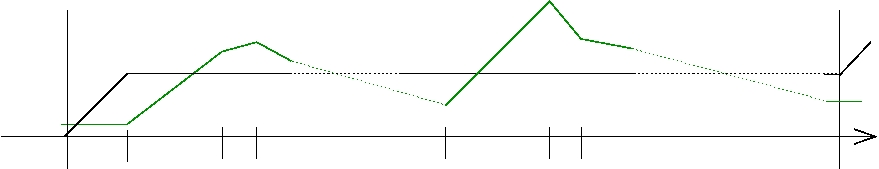}}
 \put(0.7,0){\small $s_{k-1}^\tau$}
  \put(1.9,0){\small $s_{k,0}^\tau$}
\put(3.3,0){\small $s_{k,1}^\tau$}
\put(4,0){\small $s_{k,2}^\tau$}
\put(7,0){\small $s_{k,i}^\tau$}
\put(8.4,0){\small $s_{k,i+1}^\tau$}
\put(13.3,0){\small $s_{k}^\tau$}
\put(9.7,2.8){\small $z_\tau(\cdot)$}
\put(12,2.1){\small $t_\tau(\cdot)$}
\put(14.1,0.9){\small $s$}
  \end{picture}
 \caption{Notation and interpolating curves for 
\eqref{sol:defincrement1}--\eqref{sol:defpwlin2}}
 \label{fig.parametrization2}
\end{figure}

Next we construct  interpolating curves generated by the data 
$(z_{k,i}^\tau)_{k,i}$ following the ideas in \cite{NegriKnees17}. 
Let $N\in \N$, $\tau=T/N$, $t^\tau_k=k\tau$, $z_{0,-1}:=z_0$ and 
$s^\tau_0:=t_0^\tau=0$. For each $k\geq 1$, given $s^\tau_{k-1}$ and $i\geq 0$ 
we define
\begin{gather}
\label{sol:defincrement1}
 s^\tau_{k,-1}:=s^\tau_{k-1},\quad s^\tau_{k,0}:= s^\tau_{k,-1} + \tau = 
s^\tau_{k-1} +\tau,\\
 \sigma^\tau_{k,i+1}:= \norm{z^\tau_{k,i+1}-z^\tau_{k,i}}_\bbV,\quad  
s^\tau_{k,i+1}:=s^\tau_{k,i} + \sigma^\tau_{k,i+1}.  
\end{gather}
Furthermore, $s^\tau_k:=\lim_{i\to\infty} s^\tau_{k,i}$.  
Proposition \ref{prop:sol003} guarantees that this limit exists and that the 
quantities $s^\tau_N$  are finite and uniformly bounded with respect to $N$ 
and $\eta$. 
In the time update interval we set 
\begin{align}
 t_\tau(s)&:=t^\tau_{k-1} + (s-s_{k-1}^\tau)&&\text{for }s\in 
[s^\tau_{k-1},s^\tau_{k,0}],\\
z_\tau(s)&:= z^\tau_{k,0}&&\text{for }s\in 
[s^\tau_{k-1},s^\tau_{k,0}].
\end{align}
Observe that $t_\tau(s^\tau_{k,0})=t^\tau_k$. 
Next, for $i\geq 0$ and  $s\in[s_{k,i}^\tau,s_{k,i+1}^\tau]$ we define the 
 interpolants as follows: 
\begin{align}
\label{sol:defpwlin2}
t_\tau(s)&:=t^\tau_k,\qquad 
z_\tau(s):=
\begin{cases}
z^\tau_{k,i} + \frac{(s-s^\tau_{k,i})}{\sigma_{k,i+1}^\tau} 
(z^\tau_{k,i+1} - z^\tau_{k,i}) 
&\text{if }z^\tau_{k,i+1} \neq z^\tau_{k,i}
\\
z_{k,i}^\tau&\text{if }z^\tau_{k,i+1} = z^\tau_{k,i}
\end{cases}
\,. 
\end{align}
By definition, for almost all $s$ we have 
\[
  t_\tau'(s) +\norm{z_\tau'(s)}_\bbV=1.
\]
Furthermore, we introduce the piecewise constant, left or right continuous 
interpolants
\begin{align}
 \overline{z}_\tau(s)&:= z_\tau(s^\tau_{k,i+1}),\,\,  
  \overline{t}_\tau(s):= t_\tau(s^\tau_{k,i+1}), 
 \qquad
\text{if }s\in(s^\tau_{k,i},s^\tau_{k,i+1}] \text{ for some }k\geq 1, i\geq 
-1,\\
\underline{z}_\tau(s)&:=  
z_\tau(s^\tau_{k,i}),\,\, 
\underline{t}_\tau(s):=  
t_\tau(s^\tau_{k,i}) 
 \qquad \qquad \,\,\, 
\text{if }s\in[s^\tau_{k,i},s^\tau_{k,i+1}) \text{ for some }k\geq 1, i\geq 
-1,
\end{align}
and the increment 
\begin{align}
\overline{\sigma}_\tau(s)&:= s^\tau_{k,i+1} - s^\tau_{k,i} \quad \text{ if 
}s\in(s^\tau_{k,i},s^\tau_{k,i+1}] \text{ for some }k\geq 1, i\geq 
-1.
\label{sol:eq700}
\end{align} 
Observe that $\overline\sigma_\tau(s)=\tau$ for $s\in (s_{k,-1},s_{k,0}]$ and 
that $\overline{\sigma}_\tau(s)>0$ for almost all $s\in [0, s_N^\tau]$.
\begin{proposition}
\label{prop:sol004}
 Assume \eqref{sol:assumptionsa}--\eqref{sol:assumptionsd}  and 
$-\rmD_z\calI(0,z_0)\in \partial\calR(0)$.  \\
 Then $t_\tau(s_N^\tau)=T$, and for all $\tau=T/N$ we have $z_\tau\in 
W^{1,\infty}((0,s^\tau_N);\calV)\cap L^\infty((0,s^\tau_N);\calZ)$ with 
$\norm{z'_\tau(s)}_\bbV\leq 1$,   and 
\begin{align}
\label{sol:eq018}
 \sup_N\left(s^\tau_N + \norm{z_\tau}_{W^{1,\infty}((0,s_N^\tau);\calV)} + 
\norm{z_\tau}_{L^{\infty}((0,s_N^\tau);\calZ)} 
+\norm{t_\tau}_{W^{1,\infty}((0,s_N^\tau);\R)}\right) <\infty.
\end{align}
Moreover, the interpolating curves satisfy the following energy-dissipation 
relation 
 for all $\alpha < \beta \in [0,s^\tau_N]$:
\begin{multline}
\label{sol:eq020}
 \calI(t_\tau(\beta),z_\tau(\beta)) + \int_\alpha^\beta 
 \calR_{\overline\sigma_\tau(s)\eta}(z_\tau'(s)) 
 + 
\calR^*_{\overline\sigma_\tau(s)\eta}(-\rmD_z\calI(\underline{t}_\tau(s),
\overline 
z_\tau(s)))\ds
\\
= 
\calI(t_\tau(\alpha),z_\tau(\alpha)) - \int_\alpha^\beta \langle 
\dot\ell(t_\tau(s)),z_\tau(s)\rangle t'_\tau(s)\ds  + 
\int_\alpha^\beta 
r_\tau(s)\ds.
\end{multline}
where we use the notation $\calR_\mu(v)=\calR(v) + 
\frac{\mu}{2}\norm{v}^2_\bbV$. The remainder  $r_\tau$ is given by 
\begin{align}
\label{sol:eq057}
 r_\tau(s)= \langle\rmD_z\calI(t_\tau(s),z_\tau(s)) 
-\rmD_z\calI(\underline{t}_\tau(s),\overline z_\tau(s)),z_\tau'(s)\rangle.  
\end{align}
There exist constants $c,C>0$ (independently of $N$, $\eta$) such 
that for all $\beta\in [0,s_N^\tau]$
\begin{align}
\label{sol:eq021}
\int_0^{\beta} r_\tau(s)\ds 
 \leq 
 C \eta^{-1},\\
 \norm{\overline\sigma_\tau}_{L^\infty(0,s_N^\tau)}+\norm{\overline z_\tau - 
z_\tau}_{L^\infty((0,s_N^\tau);\calV)}
& \leq C(\tau + \eta^{-1})\,.  
 \label{sol:eq022} 
\end{align}
\end{proposition}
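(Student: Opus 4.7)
The plan is to handle the bookkeeping first and then reduce the energy-dissipation identity to a discrete Fenchel--Young equality combined with a scaling rule for $\calR_\eta$ and its conjugate. The identity $t_\tau(s_N^\tau)=T$ is immediate from the construction: on each of the $N$ time-update intervals $[s_{k-1}^\tau,s_{k,0}^\tau]$ the function $t_\tau$ grows by exactly $\tau$, and it stays constant on every minimization interval $[s_{k,i}^\tau,s_{k,i+1}^\tau]$ with $i\geq 0$. The bound $\norm{z_\tau'(s)}_\bbV\leq 1$ is also by construction (equal to $1$ on minimization intervals, $0$ on time-update intervals), as is $t_\tau'\in\{0,1\}$. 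The $L^\infty((0,s_N^\tau);\calZ)$-bound for $z_\tau$ follows from the convex-combination structure of the interpolant together with \eqref{sol:eq010}, while $s_N^\tau=T+\sum_{k,i}\sigma_{k,i+1}^\tau\leq T+\sum_k\gamma_k^\tau$ is controlled uniformly by Proposition \ref{prop:sol003}; these together give \eqref{sol:eq018}.

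For the energy identity \eqref{sol:eq020} I would proceed sub-interval by sub-interval. The first-order condition for \eqref{sol:eq001}, combined with the sum rule $\partial\calR_\eta(v)=\eta\bbV v+\partial\calR(v)$, reads $-\rmD_z\calI(t_k,z_{k,i+1}^\tau)\in\partial\calR_\eta(z_{k,i+1}^\tau-z_{k,i}^\tau)$, and Fenchel--Young gives
\[
\calR_\eta(z_{k,i+1}^\tau-z_{k,i}^\tau) + \calR_\eta^*\big({-}\rmD_z\calI(t_k,z_{k,i+1}^\tau)\big) = \big\langle{-}\rmD_z\calI(t_k,z_{k,i+1}^\tau),\,z_{k,i+1}^\tau-z_{k,i}^\tau\big\rangle.
\]
The key algebraic step is the scaling rule
\[
\calR_\eta(\sigma v) = \sigma\calR_{\sigma\eta}(v),\qquad \sigma\calR_{\sigma\eta}^*(w)=\calR_\eta^*(w),\qquad \sigma>0,
\]
which follows from the one-homogeneity of $\calR$ and the substitution $u=\sigma v$ in the sup defining $\calR_{\sigma\eta}^*$. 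Applied with $\sigma=\sigma_{k,i+1}^\tau=\overline\sigma_\tau$ on $[s_{k,i}^\tau,s_{k,i+1}^\tau]$, this converts the two sides of Fenchel--Young into precisely the integrals of $\calR_{\overline\sigma_\tau\eta}(z_\tau')$ and $\calR^*_{\overline\sigma_\tau\eta}({-}\rmD_z\calI(\underline t_\tau,\overline z_\tau))$ appearing in \eqref{sol:eq020}. Combining with the chain rule $\calI(t_k,z_{k,i+1}^\tau)-\calI(t_k,z_{k,i}^\tau)=\int\langle\rmD_z\calI(t_k,z_\tau),z_\tau'\rangle\,\ds$ and inserting $\pm\rmD_z\calI(t_k,\overline z_\tau)$ (whose difference is precisely $r_\tau$) yields \eqref{sol:eq020} on each minimization interval. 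On a time-update interval $[s_{k-1}^\tau,s_{k,0}^\tau]$, $z_\tau'\equiv 0$ so $r_\tau\equiv 0$ and $\calR_{\tau\eta}(z_\tau')=0$; the only delicate point is that $\calR_{\tau\eta}^*({-}\rmD_z\calI(t_{k-1},z_{k-1}))=0$. This holds because $-\rmD_z\calI(t_{k-1},z_{k-1})\in\partial\calR(0)$ (assumed for $k=1$, and supplied by \eqref{sol:eq008} with $z_\infty=z_{k-1}=z_{k-1,\infty}$ for $k\geq 2$), forcing $\langle w,v\rangle\leq\calR(v)$ for every $v$ and hence $\calR_\eta^*(w)=0$. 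Telescoping the sub-interval identities between $\alpha$ and $\beta$ then produces \eqref{sol:eq020}.

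It remains to estimate $r_\tau$ and to verify \eqref{sol:eq022}. Since $r_\tau\equiv 0$ on time-update intervals, only minimization intervals contribute; on such an interval $t_\tau=\underline t_\tau=t_k$ so the $\ell$-part drops and $r_\tau=\langle A(z_\tau-\overline z_\tau)+\rmD\calF(z_\tau)-\rmD\calF(\overline z_\tau),z_\tau'\rangle$. Using $z_\tau-\overline z_\tau=-(s_{k,i+1}^\tau-s)z_\tau'$, the $A$-contribution equals $-(s_{k,i+1}^\tau-s)\langle Az_\tau',z_\tau'\rangle\leq 0$ and can be discarded from an upper bound, while the local Lipschitz estimate coming from \eqref{ass.F01} gives
\[
\int_{s_{k,i}^\tau}^{s_{k,i+1}^\tau}\abs{\langle\rmD\calF(z_\tau)-\rmD\calF(\overline z_\tau),z_\tau'\rangle}\,\ds \leq C\sigma_{k,i+1}^\tau\norm{z_{k,i+1}^\tau-z_{k,i}^\tau}_\calZ.
\]
Since $\sigma_{k,i+1}^\tau\leq C/\eta$ by \eqref{eq.sol502} and $\sum_{k,i}\norm{z_{k,i+1}^\tau-z_{k,i}^\tau}_\calZ$ is uniformly bounded by \eqref{sol:eq012}, summing yields $\int_0^\beta r_\tau\,\ds\leq C\eta^{-1}$, which is \eqref{sol:eq021}. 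Finally \eqref{sol:eq022} is a direct consequence of the definitions: $\overline\sigma_\tau\leq\max(\tau,C/\eta)\leq\tau+C/\eta$, and on a minimization interval $\overline z_\tau-z_\tau=\tfrac{s_{k,i+1}^\tau-s}{\sigma_{k,i+1}^\tau}(z_{k,i+1}^\tau-z_{k,i}^\tau)$, so $\norm{\overline z_\tau-z_\tau}_\bbV\leq\sigma_{k,i+1}^\tau\leq C/\eta$, while on time-update intervals the two interpolants coincide. The main obstacle, in my view, is isolating the scaling identity for $\calR_{\sigma\eta}^*$ and matching the discrete Fenchel--Young equality against the integral in \eqref{sol:eq020}; once this algebraic ingredient is in place, everything else reduces to the chain rule and the uniform bounds already established in Propositions \ref{prop:sol002} and \ref{prop:sol003}.
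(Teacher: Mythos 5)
Your proposal is correct and follows essentially the same route as the paper: the energy-dissipation identity comes from the Euler--Lagrange inclusion of each minimization step rewritten in the arc-length variable (your scaling rule $\calR_\eta(\sigma v)=\sigma\calR_{\sigma\eta}(v)$, $\sigma\calR_{\sigma\eta}^*=\calR_\eta^*$ is exactly how the paper converts the discrete Fenchel--Young equality into the integral form, the paper obtaining the same thing by first using one-homogeneity of $\partial\calR$), combined with the chain rule and the critical-point relation \eqref{sol:eq008} on the time-update intervals. The only small difference is in the remainder estimate: you drop the nonpositive $A$-contribution outright and telescope the $\calF$-part against $\sum_{k,i}\norm{z_{k,i+1}^\tau-z_{k,i}^\tau}_\calZ$ (bounded via \eqref{sol:eq012}), whereas the paper keeps the $A$-term and absorbs the $\calF$-term into it by Young's inequality to get a pointwise bound $r_\tau(s)\leq C\eta^{-1}$ before integrating; both yield the same final $\calO(\eta^{-1})$ estimate.
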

\begin{proof}
 Estimate \eqref{sol:eq018} is an immediate consequence of Proposition 
\ref{prop:sol003}. 

In order to derive \eqref{sol:eq020} assume first that $i\geq 0$, $k\geq 1$. 
Let  $s\in (s_{k,1}^\tau,s_{k,i+1}^\tau)$ such that  
$\overline\sigma_\tau(s)=\sigma_{k,i+1}^\tau\neq 0 $. From \eqref{sol:eq013} 
(left inclusion) we deduce that
\begin{align}
\label{eq:sol401}
 -\rmD_z\calI(t_k^\tau,z_{k,i+1}^\tau) - 
\sigma_{k,i+1}^\tau\eta\bbV\left(
\frac{z^\tau_{k,i+1} - z^\tau_{k,i}}{\sigma_{k,i+1}^\tau}\right) 
\in \partial\calR((z^\tau_{k,i+1} - z^\tau_{k,i})/\sigma_{k,i+1}^\tau),
\end{align}
where on the right hand side we have used the one-homogeneity of $\calR$. 
This is equivalent  to 
\begin{align}
\label{eq:sol610}
-\rmD_z\calI(\underline{t}_\tau(s),\overline{z}_\tau(s)) \in 
\partial\calR_{\overline{\sigma}_\tau(s)\eta}(z_\tau'(s)),   
\end{align}
which in fact is  valid for all $s\in 
(s_{k,0}^\tau, s_k^\tau)\backslash(\cup_{i=1}^\infty\{ s_{k,i}^\tau\})$ (and 
not only for those with $\overline{\sigma}_\tau(s)\neq 0 $).  
For $s\in (s_{k,-1},s_{k,0})$, relation \eqref{sol:eq008} yields 
$-\rmD_z\calI(t_{k-1},z_{k-1})\in \partial\calR(0)$, which is equivalent to  
\begin{align*}
 -\left(\rmD_z\calI(\underline{t}_\tau(s), \overline{z}_\tau(s)) + 
\overline\sigma_\tau(s)\eta \bbV(z_\tau'(s))\right) \in 
\partial\calR(z_\tau'(s)).
\end{align*}
Here, we used that $z_\tau'(s)=0$ for  $s\in (s_{k,-1},s_{k,0})$. This shows 
that \eqref{eq:sol610} is valid for all $s\in 
(0,s_N^\tau)\backslash\Set{s_{k,i}^\tau}{1\leq 
k\leq N, i\geq -1}$. 
By convex analysis,  \eqref{eq:sol610} can be rewritten as 
\begin{multline}
\label{sol:eq019}
 \calR_{\overline\sigma_\tau(s)\eta}(z_\tau'(s)) 
 + 
\calR^*_{\overline\sigma_\tau(s)\eta}
(-\rmD_z\calI(\underline{t}_\tau(s),
\overline 
z_\tau(s)) = \langle -\rmD_z\calI(\underline{t}_\tau(s),\overline 
z_\tau(s)),z_\tau'(s)\rangle 
\\
= 
\langle -\rmD_z\calI(t_\tau(s), z_\tau(s)),z_\tau'(s)\rangle 
+ r_\tau(s)
\end{multline} 
with $r_\tau(s)$ as in \eqref{sol:eq057}.  
Combining  \eqref{sol:eq019} with the integrated chain rule identity 
\begin{align}
\label{sol:eq611}
 \calI(t_\tau(\beta),z_\tau(\beta)) - \calI(t_\tau(\alpha), z_\tau(\alpha)) = 
 \int_\alpha^\beta   \partial_t\calI(t_\tau(s),z_\tau(s)) t_\tau'(s) +
\langle\rmD_z\calI(t_\tau(s),z_\tau(s)),z_\tau'(s)\rangle \ds
\end{align}
yields \eqref{sol:eq020}.

It remains to estimate the term $r_\tau$. 
Observe first that $\langle \ell(t_\tau(s)) - 
\ell(\underline{t}_\tau(s)),z_\tau'(s)\rangle =0$ for almost all $s\in (0,s_N)$ 
since $(t_\tau(s)-\underline t_\tau(s))\norm{z_\tau'(s)}_\calV=0$ for almost 
all $s$.  
Let $s\in 
(s_{k,i}^\tau,s_{k,i+1}^\tau)$ for some $k\geq 1$ and $i\geq -1$. Then 
$z_\tau(s) -\overline z_\tau(s)=(s - 
s_{k,i+1}^\tau)z_\tau'(s)$ and 
\begin{align}
 r_\tau(s)&=\langle A(z_\tau(s) -\overline z_\tau(s)),z_\tau'(s)\rangle 
 + \langle \rmD_z\calF(z_\tau(s)) -\rmD_z\calF(\overline 
z_\tau(s)),z_\tau'(s)\rangle
\label{sol:eq600}\\
&\overset{(1)}{\leq} (s - s_{k,i+1}^\tau)\alpha\norm{z_\tau'(s)}_\calZ^2
+ c\norm{z_\tau(s) 
- \overline z_\tau(s)}_\calZ\norm{z_\tau'(s)}_\bbV
\nonumber\\
&= 
(s - s_{k,i+1}^\tau)\alpha\norm{z_\tau'(s)}_\calZ^2 + 
c(s_{k,i+1}^\tau - s)\norm{z_\tau'(s)}_\calZ\norm{z_\tau'(s)}_\bbV
\nonumber\\
&
\leq (s - s_{k,i+1}^\tau)\alpha\norm{z_\tau'(s)}_\calZ^2 
+\frac{\alpha}{2}(s_{k,i+1}^\tau - s)\norm{z_\tau'(s)}_\calZ^2 + c_\alpha 
(s_{k,i+1}^\tau -s)\norm{z_\tau'(s)}_\bbV^2
\nonumber\\
&\overset{(2)}{\leq} c_\alpha (s_{k,i+1}^\tau -s)
\norm{z_\tau'(s)}_\bbV^2 
\overset{(3)}{\leq} c_\alpha C\eta^{-1}  . 
\nonumber
\end{align}
Estimate (1) is due to the ellipticity of $A$, assumption \eqref{ass.F01} and 
the uniform bound for the $(z_{k,i}^\tau)_{k,i}$ (see \eqref{sol:eq010});  
(2) follows since the sum of the first two terms in the previous line is 
nonpositive and (3)  follows from \eqref{eq.sol502}  and 
$\norm{z_\tau'(s)}_\bbV\in \{0,1\}$. Together with \eqref{sol:eq018} this 
proves \eqref{sol:eq021}. 
For the last estimate observe that for almost all $s\in (0,s_N^\tau)$ we have 
\begin{align*}
\norm{\overline z_\tau(s) -z_\tau(s)}_\bbV \leq  
\overline\sigma_\tau(s)\norm{z_\tau'(s)}_\bbV
\end{align*}
and we conclude using again  \eqref{eq.sol502}. 
\end{proof}

\begin{remark} 
\label{rem:sol1}
  In Section \ref{suse:infiniteloc} it follows by construction 
that $\sup_k\norm{z^h_k-z^h_{k-1}}_\bbV\leq h$, and hence, for $h\to 0$ these 
differences  converge to zero uniformly. In the present 
setting from Proposition \ref{prop:sol003} we obtain the uniform estimate
$\norm{z_{k,i+1}^\tau - z_{k,i}^\tau}_\calV\leq C\eta^{-1}$, which, for 
constant $\eta$, does not imply that 
these differences  converge. 
If this (uniform) convergence is not available, it is not clear whether  
piecewise linear and piecewise constant interpolants of the 
$(z_{k,i}^\tau)_{k,i}$ converge to the same limit function for $\tau \to 0$.   
In order to enforce this 
convergence,   we will require $\eta_N\to \infty$, cf.\ Theorem
\ref{sol:thm01}. In Section \ref{suse:sol-fixed} we will discuss the case with 
$\eta>0$ fixed.   
 \end{remark}

The main result in this Section is the following theorem, which is the 
analogue to Theorem \ref{thm.Mie01}. 
\begin{theorem}
\label{sol:thm01}
 Assume \eqref{sol:assumptionsa}--\eqref{sol:assumptionsd} and that 
$-\rmD_z\calI(0,z_0)\in \partial\calR(0)$. 
\\
For every sequence $\tau\searrow 0$ and $\eta \nearrow \infty$ there exists a 
subsequence $(\tau_n,\eta_n)_{n\in\N}$,  $S\in (0,\infty)$ and functions 
$\hat 
t\in W^{1,\infty}((0,S);\R)$ and $\hat z\in W^{1,\infty}((0,S);\calV)\cap 
L^\infty((0,S);\calZ)$ 
such that for $n\to \infty$ (we omit the index $n$ in the following)
\begin{gather}
 s_{N}^\tau\to S, 
 \label{eq.sol126} \\
   t_{\tau}\overset{*}{\rightharpoonup} \hat t \text{ in 
 }W^{1,\infty}((0,S);\R),\quad 
  t_{\tau}(s)\to\hat t(s) \text{ for every }s\in  [0,S],
 \label{eq.sol127} \\
  z_{\tau}\overset{*}{\rightharpoonup} \hat z
 \text{ weakly$*$ in }W^{1,\infty}((0,S);\calV)\cap L^\infty((0,S);\calZ)
 \label{eq.sol128} \\
 \overline{z}_\tau(s), z_{\tau}(s)\rightharpoonup \hat z(s) \text{ weakly in 
}\calZ \text{ for 
 every }s\in [0,S]\,.
 \label{eq.sol129} 
\end{gather}
Moreover, the limit pair $(\hat t, \hat z)$ satisfies 
\eqref{eq.Mief130}--\eqref{eq.Mief135}.  
\end{theorem}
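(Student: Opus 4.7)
My plan is to mimic the argument used for Theorem~\ref{thm.Mie01}, adapting each step to the two-parameter asymptotics $\tau_n\to 0$ and $\eta_n\to\infty$.

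First I would extract a subsequence by compactness. From Proposition~\ref{prop:sol004}, the sequences $(t_\tau)$, $(z_\tau)$ are uniformly bounded in $W^{1,\infty}$ and the $s_N^\tau$ are uniformly bounded; by Banach--Alaoglu and standard diagonal/Arzel\`a--Ascoli arguments I obtain \eqref{eq.sol126}, \eqref{eq.sol127}, \eqref{eq.sol128}, together with pointwise weak-in-$\calZ$ convergence of $z_\tau(s)$ via a Helly-type argument (using equicontinuity of $z_\tau$ in $\calV$ and boundedness in $\calZ$). Crucially, the assumption $\eta_n\to\infty$ combined with \eqref{sol:eq022} yields $\|\bar z_\tau-z_\tau\|_{L^\infty((0,s_N^\tau);\calV)}\to 0$, hence $\bar z_\tau(s)\rightharpoonup\hat z(s)$ in $\calZ$ as well. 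The boundary identities $\hat t(0)=0$, $\hat z(0)=z_0$ follow from the construction, and $\hat t(S)=T$ from $t_\tau(s_N^\tau)=T$ (Proposition~\ref{prop:sol004}) plus uniform Lipschitz continuity. Weak-$*$ lower semicontinuity applied to $t_\tau'(s)+\|z_\tau'(s)\|_\bbV=1$ gives $\hat t'(s)\geq 0$ and $\hat t'(s)+\|\hat z'(s)\|_\bbV\leq 1$.

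For the complementarity condition I would argue as follows. On each interval $(s_{k,-1}^\tau,s_{k,0}^\tau)$ we have $t_\tau'(s)=1$, $z_\tau'(s)=0$, and by \eqref{sol:eq008} applied at time $t_{k-1}$ the inclusion $-\rmD_z\calI(\underline t_\tau(s),\bar z_\tau(s))\in\partial\calR(0)$ holds, so the $\calV^*$-distance vanishes there; on all other subintervals $t_\tau'(s)=0$. Hence
\[
t_\tau'(s)\,\dist_{\calV^*}\!\bigl(-\rmD_z\calI(\underline t_\tau(s),\bar z_\tau(s)),\partial\calR(0)\bigr)=0\quad\text{for a.a. } s,
\]
and the liminf argument of Theorem~\ref{thm.Mie01} (using the weak convergence $\rmD_z\calI(\underline t_\tau,\bar z_\tau)\rightharpoonup\rmD_z\calI(\hat t,\hat z)$ in $\calV^*$, which is ensured by \eqref{eq:sol601}, \eqref{ass.fweakconv} and weak-$\calV^*$ closedness of $\partial\calR(0)$) transports this to the limit.

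The heart of the proof is the passage to the limit in the discrete energy--dissipation identity \eqref{sol:eq020}. Here the main obstacle is that $\bar\sigma_\tau(s)\eta$ has no controlled limit, so I cannot pass $\calR^*_{\bar\sigma_\tau\eta}$ directly. I would bypass this with Young's inequality
\[
\calR_{\bar\sigma_\tau\eta}(v)+\calR^*_{\bar\sigma_\tau\eta}(\xi)\;\geq\;\calR(v)+\|v\|_\bbV\,\dist_{\calV^*}(\xi,\partial\calR(0)),
\]
valid for any positive value of $\bar\sigma_\tau\eta$, which replaces the dissipation integrand by the lower bound appearing in \eqref{eq.Mief135}. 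Lower semicontinuity (Proposition~\ref{app_prop:lsc}) on the dissipation side, pointwise convergence plus uniform boundedness of $\partial_t\calI(t_\tau,z_\tau)$ on the power side, and the remainder estimate $\int_0^{s_1}r_\tau\leq C\eta^{-1}\to 0$ from \eqref{sol:eq021} then yield the ``$\leq$'' version of \eqref{eq.Mief135}.

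Finally, to promote ``$\leq$'' to equality I would invoke the chain-rule/Young-inequality argument of \cite[Lemma~5.2]{KneesRossiZanini}, as in the proof of Theorem~\ref{thm.Mie01}: the combination of the already-established complementarity, the chain rule in $W^{1,\infty}((0,S);\calV)\cap L^\infty((0,S);\calZ)$, and Fenchel--Young give the reverse inequality automatically. The main conceptual obstacle is really the single one identified above, namely handling the undetermined product $\bar\sigma_\tau\eta$; the Young-inequality reduction is the same device that underlies vanishing-viscosity theory and is what forces the requirement $\eta_n\to\infty$ (both for $\bar z_\tau - z_\tau\to 0$ and for the remainder to vanish).
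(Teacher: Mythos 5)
Your proposal is correct and follows essentially the same route as the paper's proof: extract compactness from Proposition~\ref{prop:sol004}, establish the discrete complementarity relation on the time-update intervals via \eqref{sol:eq008} and lower semicontinuity of the distance function using the uniform $\calV^*$-bound \eqref{eq:sol601}, apply the Young-type lower bound $\calR_\mu(v)+\calR_\mu^*(\xi)\geq\calR(v)+\norm{v}_\bbV\dist_{\calV^*}(\xi,\partial\calR(0))$ to eliminate the uncontrolled product $\overline\sigma_\tau\eta$, pass to the liminf with Proposition~\ref{app_prop:lsc}, and upgrade the energy inequality to an identity by the chain-rule argument of \cite[Lemma~5.2]{KneesRossiZanini}. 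You also correctly isolate the dual role of $\eta_n\to\infty$ -- forcing $\norm{\bar z_\tau-z_\tau}_\calV\to 0$ so that $\bar z_\tau(s)$ and $z_\tau(s)$ share the weak-$\calZ$ limit, and forcing the remainder $\int r_\tau\leq C\eta^{-1}\to 0$ -- which is exactly what the paper relies on.
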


\begin{proof}
The proof is nearly identical to the proof of Theorem \ref{thm.Mie01}, 
and we highlight  the differences, only. 
In the following we omit the index $n$.  
 The  convergence 
 results in \eqref{eq.sol126}--\eqref{eq.sol129} follow from 
the 
uniform estimates formulated in   Proposition \ref{prop:sol004}.  
Clearly, the limit pair $(\hat t,\hat z)$ satisfies the first two relations in 
\eqref{eq.Mief131}. We will next discuss the complementarity relation in 
\eqref{eq.Mief131}. 
Observe first that thanks to \eqref{eq:sol601} the term 
$-\rmD_z\calI(\underline{t}_\tau(s),\overline z_\tau(s))$ is  bounded 
in $\calV^*$ uniformly in $\tau$ and $s$. Hence, by the boundedness of 
$\partial\calR(0)$ in $\calV^*$ we obtain 
\begin{align}
\label{sol:eq613}
 \sup_{\tau>0,0\leq s\leq s_N^\tau}\dist_{\calV^*}( 
-\rmD_z\calI(\underline{t}_\tau(s),\overline 
z_\tau(s)),\partial\calR(0))<\infty.
\end{align}
 Moreover, by \eqref{eq.sol129} 
and the weak $\calZ-\calZ^*$-continuity of $\rmD_z\calI(t,\cdot)$ it follows 
that for every $s$ we have $\rmD_z\calI(\underline{t}_\tau(s),\overline 
z_\tau(s))\rightharpoonup \rmD_z\calI(\hat t(s),\hat z(s))$ weakly in $\calZ^*$ 
and in $\calV^*$. The latter is a consequence of the uniform 
$\calV^*$-bound. 
By lower semicontinuity, we therefore obtain  for all $s$: 
\begin{align*}
 \liminf_{\tau\to 0}\dist_{\calV^*}(-\rmD_z\calI(\underline{t}_\tau(s),
\overline z_\tau(s)),\partial\calR(0)) \geq 
\dist_{\calV^*}(-\rmD_z\calI(\hat{t}(s),\hat 
z(s)),\partial\calR(0))\,,
\end{align*}
which in particular shows that $\rmD_z\calI(\hat{t},\hat 
z)\in L^\infty((0,S);\calV^*)$. 
The following discrete complementarity relation is satisfied for almost all 
$s\in [0,s_N^\tau]$: 
\begin{align*}
 t'_\tau(s) \dist_{\calV^*}(-\rmD_z\calI(\underline{t}_\tau(s),
\overline z_\tau(s)),\partial\calR(0))=0\,.
\end{align*}
Indeed, this identity is trivial for $s\in [0,s_N^\tau]\backslash \cup_{k=1}^N 
[s_{k-1}, s_{k,0}]$ since then  $ t_\tau'(s)=0$ (together with 
\eqref{sol:eq613}). 
Thanks to \eqref{sol:eq008}, for $s\in (s_{k-1}, s_{k,0})$ we have 
$\dist_{\calV^*}(-\rmD_z\calI(\underline{t}_\tau(s),
\overline z_\tau(s)),\partial\calR(0))=0$. 
The same arguments as in the proof of Theorem \ref{thm.Mie01} now  lead 
to the last relation in \eqref{eq.Mief131}.

By Young's inequality, 
for $\mu>0$, $v\in \calV$ and $\zeta\in 
\calV^*$ we have  
\begin{align*}
 \calR_\mu(v) + \calR_\mu^*(\zeta)= \calR(v) + \frac{\mu}{2}\norm{v}^2_\bbV 
 +\frac{1}{2\mu}\left(\dist_{\calV^*}(\zeta,\partial\calR(0))\right)^2
 \geq \calR(v) + \norm{v}_\bbV\dist_{\calV^*}(\zeta,\partial\calR(0))
\end{align*}
which implies 
\begin{multline*}
 \calR_{\overline\sigma_\tau(s)\eta}(z_\tau'(s)) 
+\calR_{\overline\sigma_\tau(s)\eta}^*(-\rmD_z\calI(\underline{t}_\tau(s),
\overline 
z_\tau(s)))
\\\geq
\calR(z_\tau'(s)) 
+ \norm{z_\tau'(s)}_\bbV 
\dist_{\calV^*}(-\rmD_z\calI(\underline{t}_\tau(s),\overline 
z_\tau(s)),\partial\calR(0))
\end{multline*}
for almost all $s$. 
The arguments from the proof of Theorem \ref{thm.Mie01} 
in combination with Proposition \ref{app_prop:lsc}
applied to the  
 energy-dissipation estimate \eqref{sol:eq020} finally 
complete  the proof. 
\end{proof}

\subsection{Convergence for fixed penalty parameter}
\label{suse:sol-fixed}
Let us finally discuss the convergence of the incremental solutions  for 
$N\to \infty$ but  with fixed penalty parameter $\eta>0$. Again, we will 
start from the discrete energy dissipation identity in a parametrised 
framework. However, as already 
mentioned in Remark \ref{rem:sol1}, with $\eta>0$ fixed we  cannot  show 
that the piecewise affine and the piecewise constant interpolating functions 
$z_\tau,\overline{z}_\tau,\underline{z}_\tau$ converge to the same limit. 
Hence, we have to carry out a more detailed analysis for the remainder term 
$r_\tau$ in the energy dissipation balance. In order to be able to identify the 
limits of the quadratic part of $r_\tau$ which involves  $\langle A 
z_\tau',z_\tau'\rangle$ we use an arclength parametrization in terms of the 
$\calZ$-norm instead of the $\calV$-norm. 

The analysis of this section  refines the results from \cite{ACFS-M3AS17} as 
we can characterise more precisely the behavior of the solution at jump points 
by deriving a   more detailed  energy dissipation estimate.

For $\eta>0$ fixed, $N\in \N$ and $\tau=T/N$ let the sequence 
$(z_{k,i}^\tau)_{k,i}$ 
with $0\leq k\leq N$, $i\in \N\cup\{0,\infty\}$ be 
generated by \eqref{sol:eq001}--\eqref{sol:eq002}. The piecewise linear and 
piecewise constant interpolating functions are constructed as 
in \eqref{sol:defincrement1}--\eqref{sol:eq700} with the difference that now 
we define the $z$-increment with respect to the $\calZ$-norm, i.e.\  
for each $k\geq 1$ and $i\geq 0$ , given $s^\tau_{k-1}$, 
\begin{gather}
\label{sol:defincrement1-Z}
 s^\tau_{k,-1}:=s^\tau_{k-1},\quad s^\tau_{k,0}:= s^\tau_{k,-1} + \tau = 
s^\tau_{k-1} +\tau,\\
 \sigma^\tau_{k,i+1}:= \norm{z^\tau_{k,i+1}-z^\tau_{k,i}}_\calZ,\quad  
s^\tau_{k,i+1}:=s^\tau_{k,i} + \sigma^\tau_{k,i+1}.  
\end{gather}
Observe that the $\calZ$-parametrised interpolants satisfy the discrete energy 
dissipation identity \eqref{sol:eq020}. The proof is identical to the one of 
Proposition \ref{prop:sol004}. 
Thanks to  \eqref{sol:eq011} the  BV-type estimates 
\begin{align*}
 \diss_\calR(\overline{z}_\tau;[0,s_N^\tau]),\, 
 \diss_\calR(\underline{z}_\tau;[0,s_N^\tau])\leq C
\end{align*}
are valid uniformly in $N$. Here, for a function $v:[0,S]\to \calX$ the 
$\calR$-dissipation is defined in the usual way 
(cf.\ \cite[Section 2.1.1]{MieRou15}) as
\[
 \diss_\calR(v;[0,S]):=\sup_{
 \substack{
 \text{partitions} \\ 
 \text{$0=s_0<\ldots <s_K=S$}
 }}
 \sum_{k=1}^K\calR(v(s_k) - v(s_{k-1}))\,.
\]
Moreover, Propositions \ref{prop:sol002} and \ref{prop:sol003} provide  the 
uniform (with respect to $N$) bounds 
\begin{gather*}
s_N^\tau\leq C,\\
\norm{\bar \sigma_\tau}_{L^\infty((0,s_N^\tau);\R))}, \norm{\bar 
z_\tau}_{L^\infty((0,s_N^\tau);\calZ)}, 
\norm{\underline{z}_\tau}_{L^\infty((0,s_N^\tau);\calZ)}\leq C,\\
\norm{z_\tau}_{W^{1,\infty}((0,s_N^\tau);\calZ)}\leq C. 
\end{gather*}
Hence, there exist $S>0$, functions $\hat z,\bar z, 
\underline{z}:[0,S]\to \calZ$ and a (not relabelled) subsequence of 
$(z_\tau,\bar z_\tau,\underline{z}_\tau)_{N\in \N}$ such that for $N\to\infty$ 
(and $\eta>0$ fixed) the convergences stated in  
\eqref{eq.sol126}--\eqref{eq.sol127} are valid and moreover 
\begin{align}
z_\tau\overset{*}{\rightharpoonup} \hat z \text{ weakly$*$ in 
}W^{1,\infty}((0,S);\calZ),\\
 z_\tau(s)\rightharpoonup \hat z(s), \,
 \bar z_\tau(s)\rightharpoonup \bar z(s), \,
 \underline{z}_\tau(s)\rightharpoonup \underline{z}(s) 
 \text{ weakly in }\calZ \text{ and  strongly in }\calV \text{ for all 
$s$}.
\label{sol:eq704-Z}
\end{align}
Here, we applied the generalized Helly selection principle to the sequences 
$(\bar z_\tau)_\tau$ and $(\underline{z}_\tau)_\tau$, see e.g.\ 
\cite[Theorem B.5.13]{MieRou15} or \cite{MaMi05}. It is not clear whether 
the limit functions $\hat z,\bar z,\underline{z}$ coincide. However, the 
following relation is satisfied:
Let $\overline{s}_\tau(s):=\inf\Set{s_{k,i}}{s\leq s_{k,i},\, 1\leq k\leq N, 
i\geq 0}$, $\underline{s}_\tau(s):=\sup\Set{s_{k,i}}{s\geq s_{k,i},\, 1\leq 
k\leq N, i\in \N\cup\{-1,0\}}$. Clearly, $\overline{s}_\tau$ is left 
continuous, while $\underline{s}_\tau$ is right continuous. Both functions are 
nondecreasing and uniformly 
bounded from above, hence uniformly bounded in $BV([0,s_N^\tau])$. Again by 
Helly's principle, they 
contain a subsequence that converges pointwise (for all $s$) to the 
nondecreasing functions $\overline{s},\underline{s}:[0,S]\to[0,\infty]$, 
respectively (w.l.o.g.\ the same subsequence as the one for $(z_\tau)_\tau$). 
Moreover, for all $s\in [0,S]$ we have $\underline{s}(s)\leq s\leq \bar s(s)$ 
and $\underline{s}$ is right continuous, $\overline{s}$ is left continuous.  
For almost all $s\in [0,s_N^\tau]$ the identities
\begin{align}
  z_\tau(s) - \bar z_\tau(s)&= (s - \bar s_\tau(s)) z_\tau'(s),
  \quad 
  z_\tau(s) - \underline{z}_\tau(s)= (s - \underline{s}_\tau (s)) z_\tau'(s)
  \end{align}
are valid. Passing to the limit $N\to\infty$ we obtain
\begin{align}
\label{sol:eq702-Z}
  \hat z(s) - \bar z(s)&= (s - \bar s(s)) \hat z'(s),
  \quad 
  \hat z(s) - \underline{z}(s)= (s - \underline{s}(s)) \hat z'(s)\,
\end{align}
that  is valid for almost all $s\in (0,S)$.   
This can be verified as follows: The sequence $(z_\tau - \bar z_\tau)_\tau$ 
converges weakly$*$ in $L^\infty((0,S);\calZ)$ to the function $\hat z - \bar 
z$. Moreover, for every $\phi\in L^1((0,S);\calZ^*)$ the sequence $(\bar 
s_\tau(\cdot) - \cdot)\phi(\cdot))_\tau$ converges to $(\bar s(\cdot) - 
\cdot)\phi(\cdot)$  strongly in 
$L^1((0,S);\calZ^*)$. Due to the weak$*$ convergence of 
$( z'_\tau)_\tau$ in $L^\infty((0,S);\calZ)$ we ultimately obtain 
\[
 \int_0^S\langle\phi(s),(\bar 
s_\tau(s) - s) z_\tau'(s)\rangle_{\calZ^*,\calZ}\ds\to 
\int_0^S\langle\phi(s),(\bar 
s(s) - s) \hat z'(s)\rangle_{\calZ^*,\calZ}\ds
\]
for all $\phi\in L^1((0,S);\calZ^*)$ and thus weak$*$ convergence in 
$L^\infty((0,S);\calZ)$ of the sequence $(\cdot - \bar s_\tau(\cdot)) 
z_\tau'(\cdot))_\tau$ to $(\cdot - \bar s(\cdot)) 
\hat z'(\cdot))_\tau$. This proves 
\eqref{sol:eq702-Z}. 

\begin{theorem}
\label{sol:thm-Z}
 Assume \eqref{sol:assumptionsa}--\eqref{sol:assumptionsd}   
 and that 
$\rmD_z\calF:\calZ\to \calZ^*$ is 
weakly-strongly continuous. 
 
 The limit functions $\hat t$ and $(\hat z,\overline z,\underline z)$ defined  
 above satisfy \eqref{eq.Mief130} with $\overline z(0)= z_0=\underline z(0)$, 
the first two relations in \eqref{eq.Mief131}, the complementarity condition
\begin{align}
\text{for almost all $s\in (0,S)$}\quad \hat 
t'(s)\dist_{\calV^*}(-\rmD_z\calI(\hat t(s),\bar z(s)),\partial\calR(0))=0
\label{sol.eq703-Z}
\end{align}
and the energy dissipation identity
\begin{multline}
\label{sol.eq706-Z} 
 \calI(\hat t(\beta),\hat z(\beta)) + \int_0^\beta \calR(\hat z'(s)) 
+\norm{\hat z'(s)}_\bbV \dist_{\calV^*}(-\rmD_z\calI(\hat t(s),\bar 
z(s)),\partial\calR(0))\ds
\\
+\int_0^\beta \langle \rmD_z\calI(\hat t(s), \bar z(s)) -\rmD_z\calI(\hat 
t(s),\hat z(s)),\hat z'(s)\rangle \ds
\\
 =\calI(0,z_0) + \int_0^\beta \partial_t\calI(\hat t(s),\hat z(s))\hat t'(s)\ds
\end{multline}
that is valid for all $\beta\in [0,S]$. If $\overline{s}(s)\neq 
\underline{s}(s)$, then $\hat t$ is constant on 
$(\underline{s}(s),\overline{s}(s))$. Moreover,  by lower semicontinuity, 
$\overline{s}(s)-\underline{s}(s)\geq \norm{\bar z(s) - 
\underline{z}(s)}_\calZ$ for all $s$. Finally, the following relation 
is valid for almost all $s$:
\begin{align}
\label{sol.eq707-Z}
 \hat t'(s)\big((\overline{s}(s)-\underline{s}(s))+\norm{\bar z(s) - 
\underline{z}(s)}_\calZ +\norm{\bar z(s) - \hat z(s)}_\calZ + \norm{\hat z(s) - 
\underline{z}(s)}_\calZ\big) =0.
\end{align}
\end{theorem}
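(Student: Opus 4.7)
The proof follows the structure of Theorem \ref{thm.Mie01} and Theorem \ref{sol:thm01}, with the essential new feature that, since $\eta>0$ is kept fixed, the piecewise constant interpolants $\overline z_\tau,\underline z_\tau$ need not share the limit of the piecewise affine interpolant $z_\tau$. This forces an additional corrector term in the energy balance and is precisely what motivates the strengthened assumption that $\rmD_z\calF:\calZ\to\calZ^*$ is weakly--strongly continuous.

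I would first collect the easy points. The boundary values $\hat t(0)=0$, $\hat t(S)=T$, $\hat z(0)=\bar z(0)=\underline z(0)=z_0$ come directly from the pointwise convergences \eqref{eq.sol127}, \eqref{sol:eq704-Z}; in the $\calZ$-arclength parametrization the identity $t_\tau'(s)+\norm{z_\tau'(s)}_\calZ=1$ holds almost everywhere, so weak$^*$ compactness in $L^\infty$ together with weak lower semicontinuity of the $\calZ$-norm yields $\hat t'\geq 0$ and $\hat t'+\norm{\hat z'}_\calZ\leq 1$, which in turn implies the second relation in \eqref{eq.Mief131} via $\norm{\cdot}_\bbV\leq\norm{\cdot}_\calZ$. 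Next, the uniform $\calV^*$-bound \eqref{eq:sol601} combined with the weak--weak continuity of $\rmD_z\calI(t,\cdot)$ gives $\rmD_z\calI(\underline t_\tau(s),\overline z_\tau(s))\rightharpoonup \rmD_z\calI(\hat t(s),\bar z(s))$ weakly in $\calV^*$ for every $s$; the complementarity relation \eqref{sol.eq703-Z} then follows by passing to the limit in the discrete counterpart via Lemma~\ref{app_prop:lsc2} and the weak lower semicontinuity of $\dist_{\calV^*}(\cdot,\partial\calR(0))$.

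The central step is the energy-dissipation identity \eqref{sol.eq706-Z}. My plan is to pass to the limit directly in the pointwise optimality relation \eqref{sol:eq019}. A Young-type inequality applied to the left-hand side produces the lower bound $\calR(z_\tau')+\norm{z_\tau'}_\bbV\dist_{\calV^*}(-\rmD_z\calI(\underline t_\tau,\bar z_\tau),\partial\calR(0))$; for the right-hand side $-\langle\rmD_z\calI(\underline t_\tau,\bar z_\tau),z_\tau'\rangle$ I insert and subtract $\rmD_z\calI(\hat t,\hat z)$ and use the chain rule
\[
 \calI(\hat t(\beta),\hat z(\beta))-\calI(0,z_0)=\int_0^\beta\partial_t\calI(\hat t(s),\hat z(s))\hat t'(s) + \langle \rmD_z\calI(\hat t(s),\hat z(s)),\hat z'(s)\rangle\,ds,
\]
which is valid since $\hat z\in W^{1,\infty}((0,S);\calZ)$ and $\calI\in C^1$. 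Integration and combination yield the $\leq$-direction of \eqref{sol.eq706-Z}; the crucial limit $\int_0^\beta\langle\rmD_z\calI(\underline t_\tau,\bar z_\tau),z_\tau'\rangle\,ds\to\int_0^\beta\langle \rmD_z\calI(\hat t,\bar z),\hat z'\rangle\,ds$ is exactly where the assumption that $\rmD_z\calF:\calZ\to\calZ^*$ is weakly--strongly continuous enters: it delivers strong $\calZ^*$-convergence of $\rmD_z\calF(\overline z_\tau)$, which pairs with the weak$^*$ convergence of $z_\tau'$ in $L^\infty((0,S);\calZ)$, while the linear part with $A$ is handled analogously using $\frac{d}{ds}\tfrac{1}{2}\langle A z_\tau,z_\tau\rangle=\langle A z_\tau,z_\tau'\rangle$ and weak lower semicontinuity of the quadratic form. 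The reverse inequality giving equality is then recovered by the standard Young-duality argument as at the end of the proof of Theorem~\ref{thm.Mie01}, exploiting that by \eqref{sol.eq703-Z} the element $-\rmD_z\calI(\hat t,\bar z)$ is admissible as a subgradient whenever $\hat t'=0$.

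The structural statements follow from the discrete geometry. On a time-progress interval $(s_{k-1}^\tau,s_{k,0}^\tau)$ one has $t_\tau'\equiv 1$, $z_\tau'\equiv 0$, $\bar z_\tau=\underline z_\tau=z_\tau$ and $\bar s_\tau-\underline s_\tau=\tau\to 0$; on an iteration interval $(s_{k,0}^\tau,s_k^\tau)$ one has $t_\tau'\equiv 0$. Passing to the limit, any $s$ with $\hat t'(s)>0$ must sit in the limit of time-progress parts, where both $\bar s(s)-\underline s(s)$ and each of $\bar z(s)-\underline z(s)$, $\hat z(s)-\underline z(s)$, $\bar z(s)-\hat z(s)$ vanish; this yields \eqref{sol.eq707-Z} and, on any nondegenerate interval $(\underline s(s),\bar s(s))$, the constancy of $\hat t$ (since there $t_\tau'=0$ for all small $\tau$). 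The inequality $\bar s(s)-\underline s(s)\geq\norm{\bar z(s)-\underline z(s)}_\calZ$ is the pointwise limit of the triangle inequality $\sum_j\sigma_{k,j+1}^\tau\geq\norm{\bar z_\tau(s)-\underline z_\tau(s)}_\calZ$ together with weak lower semicontinuity of the $\calZ$-norm. The main obstacle throughout is the identification of the limit of the remainder $r_\tau$ in \eqref{sol:eq057}: the difference $\bar z_\tau-z_\tau$ does not vanish in the limit as it did in Theorem \ref{sol:thm01}, and its persistence is the source of the corrector integral $\int_0^\beta\langle\rmD_z\calI(\hat t,\bar z)-\rmD_z\calI(\hat t,\hat z),\hat z'\rangle\,ds$ that distinguishes \eqref{sol.eq706-Z} from \eqref{eq.Mief135}.
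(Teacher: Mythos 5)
Your proposal follows the same route as the paper: pass to the liminf in the discrete energy--dissipation identity (Proposition~\ref{prop:sol004}), treating the remainder $r_\tau$ by separating the $\calF$-contribution (which does converge strongly, via the strengthened weak--strong continuity of $\rmD_z\calF$) from the $A$-contribution (which does not), and then close the identity by Young duality. One caveat: the displayed ``crucial limit'' $\int_0^\beta\langle\rmD_z\calI(\underline t_\tau,\bar z_\tau),z_\tau'\rangle\,\dd s\to\int_0^\beta\langle\rmD_z\calI(\hat t,\bar z),\hat z'\rangle\,\dd s$ is \emph{not} a full limit -- the $A$-part is only a lower-semicontinuity estimate (your own closing clause about ``weak lower semicontinuity of the quadratic form'' contradicts the word ``limit''). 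The paper makes this precise by keeping the chain rule at the discrete level and writing $r_\tau=I_1^\tau+I_2^\tau$ with $I_1^\tau=-\int(\bar s_\tau-s)\langle Az_\tau',z_\tau'\rangle\ge -\cdots$ controlled one-sidedly via the Ioffe-type result \cite[Theorem 21]{valadier90}, whereas your insertion of $\pm\rmD_z\calI(\hat t,\hat z)$ forces a second, cross-term application of lower semicontinuity on $\langle A(z_\tau-\hat z),z_\tau'\rangle$; this works but is bookkeeping-heavier. Finally, for \eqref{sol.eq707-Z} the argument ``any $s$ with $\hat t'(s)>0$ must sit in the limit of time-progress parts'' is only heuristic: the paper instead starts from the pointwise product identities $t_\tau'(s)\norm{\bar z_\tau(s)-\underline z_\tau(s)}_\calZ=0$ etc., and uses Lemma~\ref{app_prop:lsc2} to push them to the limit. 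Your route can be made rigorous, but you should state that integral lower-semicontinuity step rather than rely on persistence of time-progress intervals, which does not hold pointwise.
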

\begin{remark}
 Observe that in Theorem \ref{sol:thm-Z} the assumption on $\rmD_z\calF$ is 
slightly stronger than what is required in 
\eqref{ass.fweakconv}.  
\end{remark}

\begin{proof}
 The complementarity relation \eqref{sol.eq703-Z} follows with the same 
arguments 
as in the proof of Theorem \ref{sol:thm01}. Starting again from the discrete 
energy dissipation identity \eqref{sol:eq020} with $\alpha=0$ and $\beta>0$ on 
the left hand side we may pass to the limit inferior using the same arguments 
as in the proof of Theorem \ref{sol:thm01} and obtain
\begin{multline*}
 \liminf_{N\to\infty} 
\left( \calI(t_\tau(\beta),z_\tau(\beta)) + \int_0^\beta 
 \calR_{\overline\sigma_\tau(s)\eta}(z_\tau'(s)) 
 +  
\calR^*_{\overline\sigma_\tau(s)\eta}(-\rmD_z\calI(\underline{t}_\tau(s),
\overline 
z_\tau(s)))\ds\right)
 \\ \geq 
 \calI(\hat t(\beta), \hat z(\beta)) + \int_0^\beta 
 \calR(\hat z'(s)) + \norm{\hat 
z'(s)}_\bbV\dist_{\calV^*}\big(-\rmD_z\calI(\hat t(s),\bar 
z(s)),\partial\calR(0)\big)\ds\,.
\end{multline*}
On the right hand side of \eqref{sol:eq020} we have to be more careful with the 
remainder term $\int_0^\beta r_\tau(s)\ds$. From \eqref{sol:eq600} we obtain 
\begin{align*}
 \int_0^\beta r_\tau(s)\ds 
 &= \int_0^\beta - (\overline{s}_\tau(s) - s)\langle 
Az_\tau'(s),z_\tau'(s)\rangle\ds + \int_0^\beta \langle \rmD_z\calF(z_\tau(s)) 
- \rmD_z\calF(\overline{z}_\tau(s)),z_\tau'(s)\rangle\ds
\\
&=:I_1^\tau + I_2^\tau 
\,.
\end{align*}
Thanks to \eqref{sol:eq704-Z} 
 and the continuity assumption on $\rmD_z\calF$, 
for all $s\in (0,S)$  the terms 
$\rmD_z\calF(\overline{z}_\tau(s))$ and $\rmD_z\calF(z_\tau(s))$ converge 
strongly in $\calZ^*$ to the limits 
 $\rmD_z\calF(\overline{z}(s))$ and $\rmD_z\calF(\hat z(s))$, respectively. 
Since these terms are uniformly bounded in $\calZ^*$ (uniformly with respect 
to $s$ and $\tau$) they also converge strongly in $L^1((0,S);\calZ^*)$. 
Together with the weak$*$ convergence of $(z_\tau')_\tau$ in 
$L^\infty((0,S);\calZ^*)$ it follows that 
\begin{align*}
 \lim_{N\to\infty} I_2^\tau=\int_0^\beta \langle \rmD_z\calF(\hat z(s)) 
- \rmD_z\calF(\overline{z}(s)),\hat z'(s)\rangle\ds \,.
\end{align*}

As for $I_1^\tau$ thanks to the non-negativity and  the pointwise and  strong 
convergence in 
$L^1((0,S);\R)$ of the sequence $(\bar s_\tau(\cdot) - \cdot)_\tau$, the 
weak$*$ convergence of $(z_\tau')_\tau$ in $L^\infty((0,S);\calZ)$ and the 
convexity of the mapping $v\to\int_0^\beta (\bar s(r) -r) \langle A 
v(r),v(r)\rangle\dr$ with \cite[Theorem 21]{valadier90} we conclude that  
\begin{multline}
 \limsup_{N\to\infty}
 \int_0^\beta - (\overline{s}_\tau(s) - s)\langle 
Az_\tau'(s),z_\tau'(s)\rangle\ds
=-\liminf_{N\to\infty}  \int_0^\beta  (\overline{s}_\tau(s) - s)\langle 
Az_\tau'(s),z_\tau'(s)\rangle\ds
\\
\leq 
 \int_0^\beta - (\overline{s}(s) - s)\langle 
A \hat z'(s),\hat z'(s)\rangle\ds
= \int_0^\beta \langle A (\hat z(s)- \bar z(s)),\hat z'(s)\rangle\dr\,. 
 \end{multline}
This yields \eqref{sol.eq706-Z} with $\leq$ instead of an equality. By the very 
same arguments as in Section \ref{sec:locmin} we finally obtain  
\eqref{sol.eq706-Z} 
with an equality.

Relation \eqref{sol.eq707-Z} can be verified as follows: By the definition of 
the interpolating curves, we have $\hat t_\tau'(s)\norm{\bar z_\tau(s) - 
\underline{z}_\tau(s)}_\calZ=0$ for almost all $s$. Moreover, for every $s$ we 
obtain 
\[ 
 \liminf_{N\to \infty} 
\norm{\bar z_\tau(s) - \underline{z}_\tau(s)}_\calZ\geq  
\norm{\bar z(s) - \underline{z}(s)}_\calZ.
\]
With Lemma 
\ref{app_prop:lsc2} applied  to $\int_\alpha^\beta t_\tau'(s) 
\norm{\bar z_\tau(s) - \underline{z}_\tau(s)}_\calZ\ds$ with arbitrary 
$\alpha<\beta\in [0,S]$ we conclude. The other terms involving $\hat z,\bar 
z,\underline z$ can be treated 
similarly.  

Assume that $\overline{s}(s)\neq\underline{s}(s)$ for some $s\in [0,S]$. Let 
further $(\epsilon_0,\epsilon_1) \subset [\underline{s}(s),\overline{s}(s)]$ 
be an arbitrary nonempty interval. Then there exists  $N_0\in \N$ 
such that for all $N\geq N_0$ we have 
$(\epsilon_0,\epsilon_1)\subset[\underline{s}_\tau(s),\overline{s}_\tau(s)]$. 
It follows that  $t_\tau$ is constant on 
$[\underline{s}_\tau(s),\overline{s}_\tau(s)]$ for all $N\geq N_0$ since 
otherwise these intervals coincide with the time-update interval and have the 
width $\tau$ tending to zero for $N\to\infty$. Altogether it follows that the 
limit function  $\hat t$ is constant on 
$(\epsilon_0,\epsilon_1)$, as well.  
\end{proof}

Analogously to Proposition \ref{int:propeq} we finally obtain the following 
characterisation 
of the limit curves $(\hat t,\hat z,\overline z)$ in terms of a differential 
inclusion: 
Assume that the limit curve $(\hat t,\hat z,\overline z)$ is nondegenerate,  
i.e.\ $\hat t'(s) + \norm{\hat z'(s)}_\calZ>0$ for almost all $s$.  Then there 
exists a measurable function $\lambda:[0,S]\to[0,\infty)$ such that 
\[
 \text{for almost all $s\in [0,S]$}:\qquad \lambda(s)\hat t'(s)=0, 
 \quad 0\in \partial\calR(\hat z'(s)) + \lambda(s)\bbV\hat z'(s) + 
\rmD_z\calI(\hat t(s),\bar z(s))\,.
\]
Under the above assumptions, we have  
$\lambda(s)=\dist_{\calV^*}(-\rmD_z\calI(\hat 
t(s),\overline{z}(s)),\partial\calR(0))/\norm{\hat z'(s)}_\bbV$ if $\hat 
z'(s)\neq 0$ and $\lambda(s)=0$ otherwise. 

\section{An alternate minimisation scheme  with 
penalty term}
\label{suse:altcomplvisc}
Let $\calU$ be a further Hilbert space and $\calQ:=\calU\times \calZ$. 
Let $\calZ,\calV,\calX$ satisfy \eqref{eq.Mief000}. 
With $\bbC\in \Lin(\calU,\calU^*)$, $\bbB\in \Lin(\calV,\calU^*)$, 
$\bbA\in \Lin(\calZ,\calZ^*)$  we define $\calA\in \Lin(\calQ,\calQ^*)$ via
\begin{align}
\calA(u,z):=\begin{pmatrix}
             \bbC &\bbB\\
             \bbB^* &\bbA
            \end{pmatrix} 
            \begin{pmatrix}
 u\\z
 \end{pmatrix}
 =\begin{pmatrix}
   \bbC u +\bbB z\\
   \bbB^* u + \bbA z
  \end{pmatrix}\,.
\end{align}
It is assumed that $\calA$ is self adjoint and positive definite with 
\begin{align}
\label{ass:alt001}
\forall q\in \calQ:\quad  \langle \calA q,q\rangle \geq \alpha\norm{q}^2_\calQ 
\end{align}
for some positive constant $\alpha$. 
For $\ell=(\ell_u,\ell_z)\in C^1([0,T],(\calU^*\times \calV^*))$, $\calF\in 
C^2(\calZ,\R)$   and $q=(u,z)\in\calQ$ we define the energy
\begin{align}
 \calE(t,q):=\frac{1}{2}\langle \calA q,q\rangle + \calF(z) -\langle 
\ell(t),q\rangle\,,
\end{align}
and use the same dissipation potential $\calR:\calX\to[0,\infty)$ as before, 
i.e.\ $\calR$ is  convex, lower semicontinuous, positively homogeneous of 
degree one and satisfies \eqref{eq.Mief100}.  
Given $z_0\in \calZ$ the aim is to find solutions $q=(u,z):[0,T]\to \calQ$ of 
the system
\begin{align}
0&=\bbC u + \bbB z -\ell_u(t),
\label{eq:dis1}\\
0&\in \partial\calR(\dot z(t)) + (\bbB^* u(t) + \bbA z(t)) +\rmD_z\calF(z(t)) 
-\ell_z(t) 
\label{eq:dis2}
\end{align}
with $z(0)=z_0$. 
In applications, the first equation typically represents the (stationary) 
balance 
of linear momentum while the second inclusion  describes  
the evolution of the internal variable $z$. Solving the first equation for $u$ 
in dependence of $z$ the system can be reduced to a sole evolution law in $z$ 
and we are back in the situation discussed in the previous sections. Hence, if 
in each incremental step one looks for minimizers simultaneously in 
$(u,z)$, the analysis of the previous sections guarantees the convergence of 
suitable interpolants of the incremental solutions of 
\eqref{sol:eq001}--\eqref{sol:eq002} to a limit function as 
described in  Theorem  \ref{sol:thm01}.  
However, from a practical 
point of 
view the iteration in \eqref{sol:eq001}--\eqref{sol:eq002} will be stopped 
after a finite number of steps and in addition it is sometimes more convenient 
to follow an operator splitting ansatz.

The aim of this section is to analyse the following  
alternate minimisation scheme combined with iterated viscous minimisation 
(relaxed local minimisation): 

Given $N\in \N$, time-step size $\tau = 
T/N$, $\eta,\delta >0$, an initial datum $z_0\in \calZ$ and $u_0\in \calU$ with 
$\rmD_u\calE(0,u_0,z_0)=0$ determine recursively $u_{k,i}$ and $z_{k,i}$ 
for 
$1\leq k\leq N$ and $i\geq 1$ by the following procedure: Let 
$z_{k,0}:= z_{k-1}$, $u_{k,0}:= u_{k-1}$. Then for $i\geq 1$
\begin{align}
 u_{k,i}&= \argmin\Set{\calE(t_k,v,z_{k,i-1})}{v\in \calU},
 \label{alt:eq101}\\
 z_{k,i}&\in \Argmin\Set{\calE(t_k,u_{k,i},\xi) +\frac{\eta}{2}\norm{\xi - 
z_{k,i-1}}_\bbV^2 + \calR(\xi - z_{k,i-1})}{\xi \in \calZ},
\label{alt:eq102}\\
&\text{stop if } \norm{z_{k,i}-z_{k,i-1}}_\calV\leq \delta;\quad 
(u_k,z_k):= (u_{k,i},z_{k,i})\,.
\label{alt:eq103}
\end{align}
 
\begin{remark}
 Observe that for $\bbB=0$ (which is an admissible choice) this approach 
coincides with \eqref{sol:eq001}--\eqref{sol:eq002} with a stopping 
criterion instead of \eqref{sol:eq002}. 
\end{remark}

Clearly, minimizers exist in \eqref{alt:eq101}--\eqref{alt:eq102}. A 
straightforward adaption of the arguments leading 
to Proposition \ref{prop:sol001} 
results in 
\begin{proposition}
 \label{altc:prop01}
 Assume \eqref{sol:assumptionsa}--\eqref{sol:assumptionsd} and 
\eqref{ass:alt001}. 

 For every $N\in \N$, $\eta,\delta>0$ and $1\leq k\leq N$ there 
exists $M_{k}^N\in \N$ such that the stopping criterion \eqref{alt:eq103} is 
satisfied after $M_{k}^N$ minimisation steps. Moreover, there exists a constant 
$C>0$ such that for all  $N\in \N$, $\eta,\delta>0$, $1\leq k\leq N$, $1\leq 
i\leq M_k^N$ the corresponding minimizers satisfy 
the bounds 
\begin{align}
 \norm{u_{k,i}}_\calU +\norm{z_{k,i}}_\calZ &\leq C,
 \label{alt:eq010a}\\
 \sum_{s=1}^{N}\sum_{j=0}^{M_s^N-1}
 \left(\calR(z_{s,j+1}-z_{s,j}) 
+\frac{\eta}{2}\norm{z_{s,j+1}-z_{s,j}}_\bbV^2 \right)&\leq C.
\label{alt:eq011}
\end{align}
\end{proposition}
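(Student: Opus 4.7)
The plan is to mimic Proposition \ref{prop:sol001} but now applied to the full energy $\calE(t_k,u_{k,i},z_{k,i})$, exploiting that each of the two alternating minimisation steps is an energy-descent step. First, I would establish a two-step monotonicity estimate. The minimisation property \eqref{alt:eq101} yields $\calE(t_k,u_{k,i},z_{k,i-1}) \leq \calE(t_k,u_{k,i-1},z_{k,i-1})$. Testing the minimisation problem \eqref{alt:eq102} against $\xi = z_{k,i-1}$ gives
\begin{align*}
\calE(t_k,u_{k,i},z_{k,i}) + \tfrac{\eta}{2}\norm{z_{k,i}-z_{k,i-1}}_\bbV^2 + \calR(z_{k,i}-z_{k,i-1}) \leq \calE(t_k,u_{k,i},z_{k,i-1}),
\end{align*}
which, chained with the previous inequality, produces the incremental energy-dissipation estimate
\begin{align*}
\calE(t_k,u_{k,i},z_{k,i}) + \tfrac{\eta}{2}\norm{z_{k,i}-z_{k,i-1}}_\bbV^2 + \calR(z_{k,i}-z_{k,i-1}) \leq \calE(t_k,u_{k,i-1},z_{k,i-1}).
\end{align*}

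Next I would sum this over $i = 1,\dots,M_k^N$ (telescoping) and handle the passage $k-1 \to k$ via the identity $\calE(t_k,u_{k-1},z_{k-1}) = \calE(t_{k-1},u_{k-1},z_{k-1}) + \int_{t_{k-1}}^{t_k}\partial_t\calE(r,u_{k-1},z_{k-1})\dr$, where $|\partial_t\calE(t,q)| \leq \mu(\calE(t,q)+c)$ holds analogously to \eqref{eq.Mief02} thanks to the linear structure of the $\ell$-term and \eqref{eq.Mief00}. A discrete Gronwall argument in the style of \cite[Theorem 2.1.5]{MieRou15} then yields, for every $1 \leq k \leq N$ and $1 \leq i \leq M_k^N$,
\begin{align*}
\calE(t_k,u_{k,i},z_{k,i}) + \sum_{s=1}^{k-1}\sum_{j=0}^{M_s^N-1}\Big(\calR(z_{s,j+1}-z_{s,j}) + \tfrac{\eta}{2}\norm{z_{s,j+1}-z_{s,j}}_\bbV^2\Big) + \sum_{j=1}^{i}\Big(\calR(z_{k,j}-z_{k,j-1}) + \tfrac{\eta}{2}\norm{z_{k,j}-z_{k,j-1}}_\bbV^2\Big) \leq (c + \calE(0,u_0,z_0))\rme^{\mu T}.
\end{align*}
This uniform bound, combined with the $\calQ$-coercivity \eqref{ass:alt001} of $\calE$ and the uniform-in-$t$ boundedness of $\ell$, immediately delivers both \eqref{alt:eq010a} and \eqref{alt:eq011}, independently of $N$, $\eta$, $\delta$.

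Finally, for the finiteness of $M_k^N$: suppose that for some $k$ the stopping criterion \eqref{alt:eq103} were never met, so that the inner loop produced an infinite sequence $(z_{k,i})_{i \in \N}$. Applying the same telescoping argument within the $k$-th step alone (with $i$ ranging over $\N$) gives a uniform bound on $\sum_{j=0}^{\infty}\tfrac{\eta}{2}\norm{z_{k,j+1}-z_{k,j}}_\bbV^2$, whence $\norm{z_{k,j+1}-z_{k,j}}_\bbV \to 0$ as $j \to \infty$. Since $\norm{\cdot}_\bbV$ is equivalent to $\norm{\cdot}_\calV$ on $\calV$, we also obtain $\norm{z_{k,j+1}-z_{k,j}}_\calV \to 0$, so for some finite $i$ we must have $\norm{z_{k,i}-z_{k,i-1}}_\calV \leq \delta$, contradicting the assumption. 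Hence $M_k^N < \infty$.

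The main obstacle I expect is a purely bookkeeping one: keeping the two layers of indices $(k,i)$ separated while applying the discrete Gronwall estimate to the outer index $k$ only, and making sure the intermediate updates $(u_{k,i},z_{k,i-1})$ (which are not actual iterates but appear in the monotonicity chain) do not interfere with the telescoping. Everything else is a direct transcription of the arguments already developed for Proposition \ref{prop:sol001} and its $\calI$-only counterpart in Proposition \ref{prop:sol002}; no new analytic ingredient beyond \eqref{ass:alt001} is required.
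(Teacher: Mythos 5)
Your proposal is correct and follows essentially the same route as the paper: the same two-step energy-descent chain $\calE(t_k,u_{k,i},z_{k,i})+\calR_\eta(z_{k,i}-z_{k,i-1})\leq\calE(t_k,u_{k,i-1},z_{k,i-1})$, the same chain-rule expansion to pass from $t_{k-1}$ to $t_k$, the same discrete Gronwall step, and the same summability-of-increments argument to force $\norm{z_{k,i}-z_{k,i-1}}_\calV\to 0$ and hence finiteness of $M_k^N$. The paper phrases the finiteness via monotone bounded sequences rather than via the bounded telescoping sum, but these are the same observation.
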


\begin{proof}
Let $\delta> 0$. 
 Note first that as a consequence of coercivity and the assumptions on $\ell$   
the energy functional $\calE$ is uniformly bounded from below. This implies 
that the sequences $(e_i)_{i\geq 0}:=(\calE(t_k,u_{k,i},z_{k,i}))_{i\geq 
0}$ and $(e_i+\delta_i)_{i\geq 0}$ with $\delta_i:=\calR(z_{k,i} - 
z_{k,i-1}) +\frac{\eta}{2}\norm{z_{k,i} - z_{k,i-1}}_\bbV^2$ are uniformly 
bounded from below, as well. Moreover, 
 arguing as subsequent to  \eqref{sol:eq003} we see that these 
sequences are nested (i.e.\ $e_i + \delta_i\leq e_{i-1}\leq 
e_{i-1} + \delta_{i-1}$  
for all $i\geq 1$) and nonincreasing. Hence both sequences converge to the same 
limit. This in turn implies that $\norm{z_{k,i} - z_{k,i-1}}_\calV$ tends to 
zero for $i\to\infty$. Hence,  $M_k^N:=\inf\Set{i\in \N}{\norm{z_{k,i} - 
z_{k,i-1}}_\calV\leq \delta}$ is finite, which proves the first statement of 
the Proposition.

Observe further that  relations 
\eqref{alt:eq101}--\eqref{alt:eq102} imply that $e_i + \delta_i \leq e_{i-1}$ 
for all $i\geq 2$ and that 
\begin{align*}
e_1 + \delta_1\leq 
\calE(t_k,u_{k,1},z_{k,0})\leq \calE(t_{k-1},u_{k,0},z_{k,0}) 
=\calE(t_{k-1},u_{k-1},z_{k-1}) +\int_{t_{k-1}}^{t_k}\partial_t
\calE(r,u_{k-1},z_{k-1})\d r\,. 
\end{align*}
Taking the sum with respect to $i$ yields (with $\calR_\eta(v)=\calR(v) + 
\frac{\eta}{2}\norm{v}_\bbV^2$) 
\begin{align*}
 \calE(t_k,u_{k,i},z_{k,i}) +\sum_{j=1}^i\calR_\eta(z_{k,j} - z_{k,j-1}) 
 \leq \calE(t_{k-1},u_{k-1},z_{k-1}) +\int_{t_{k-1}}^{t_k} 
\partial_t\calE(r,u_{k-1},z_{k-1})\d r\,.
\end{align*}
Now, the uniform bounds \eqref{alt:eq010a}--\eqref{alt:eq011} follow by similar 
arguments as in \cite[Chapter 2.1.2]{MieRou15}.
\end{proof}

As in the previous sections, the arc length of the linear interpolation curves 
of the minimizers generated by \eqref{alt:eq101}--\eqref{alt:eq103} is 
uniformly bounded:
\begin{proposition}
 \label{altc:proparclength}
 Assume \eqref{sol:assumptionsa}--\eqref{sol:assumptionsd}, 
\eqref{ass:alt001} and $\rmD_z\calE(0,z_0,u_0)\in \calV^*$. Let 
\begin{align}
\label{sol:eq601}
 \gamma_{k}^\tau&:= \sum_{i=0}^{M^N_k-1}
 \norm{z_{k,i+1}^\tau - 
z_{k,i}^\tau}_\calZ\,,
\qquad 
\mu_k^\tau:=\sum_{i=0}^{M_k^N-1}\norm{u_{k,i+1}^\tau - u_{k,i}^\tau}_\calU.
\end{align}
Then there exists a constant $C> 0$ such that for all $N\in \N$ 
and $\eta,\delta>0$ 
\begin{align}
 \label{alt:eq012a}
 \sum_{k=1}^N\gamma_{k}^\tau &\leq C
 \left(T\norm{\ell}_{C^1([0,T],\calQ^*)} + 
\norm{\rmD_z\calE(0,u_0,z_0)}_{\calV^*} 
+\sum_{k=1}^N\sum_{i=0}^{M^N_k-1} \calR(z_{k,i+1}^\tau - z_{k,i}^\tau)
\right)\,,\\
\label{altc:eq104}
 \sum_{k=1 }^N \mu_k^\tau &\leq C\left(T + \sum_{k=1}^N 
\gamma_k^\tau\right). 
\end{align} 
  Moreover, for all $0\leq i\leq M_k^N-1$
 \begin{align}
\label{alt:eq502}
 \eta\norm{z_{k,i+1}^\tau - z_{k,i}^\tau}_\bbV &\leq C,\\
 \label{altc:eq115}
  \norm{u_{k,i+1} - u_{k,i}}_\calU &\leq C(\tau +\eta^{-1}).
 \end{align}
\end{proposition}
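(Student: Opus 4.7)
The plan is to adapt the proof of Proposition \ref{prop:sol003} to this alternating setting by exploiting the affine structure of the $u$-minimisation in order to reduce everything to a $z$-estimate.

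\textbf{Step 1 (the $u$-updates).} The Euler--Lagrange condition for \eqref{alt:eq101} reads $\bbC u_{k,i+1} + \bbB z_{k,i} = \ell_u(t_k)$ for every $i\geq 0$, so subtracting the identity at index $i-1$ gives, for $i\geq 1$,
$$\bbC(u_{k,i+1}-u_{k,i}) = -\bbB(z_{k,i}-z_{k,i-1}).$$
Since $\bbC$ is $\calU$-elliptic (from \eqref{ass:alt001} tested on $(\cdot,0)$) and $\bbB\in\Lin(\calV,\calU^*)$, this yields $\norm{u_{k,i+1}-u_{k,i}}_\calU \leq C\norm{z_{k,i}-z_{k,i-1}}_\calV$. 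For the transition step $i=0$ I use $\rmD_u\calE(t_{k-1},u_{k-1},z_{k-1})=0$ (inherited from the previous block's first update, or from the assumption on $u_0$) to obtain $\bbC(u_{k,1}-u_{k,0}) = \ell_u(t_k)-\ell_u(t_{k-1})$, hence $\norm{u_{k,1}-u_{k,0}}_\calU \leq C\tau$. Once \eqref{alt:eq502} is available, \eqref{altc:eq115} follows immediately by norm equivalence, and summation over $i$ and $k$ combined with \eqref{alt:eq012a} gives \eqref{altc:eq104}.

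\textbf{Step 2 (telescoped $z$-estimate).} Mirroring \eqref{sol:eq013}--\eqref{eq.Mief140}, I subtract the $z$-inclusions from \eqref{alt:eq102} at indices $i$ and $i+1$, test with $z_{k,i+1}-z_{k,i}$, and use the one-homogeneity of $\calR$ to drop the subdifferential. The outcome is the analog of \eqref{sol:eq111} with one additional coupling term
$$\langle\bbB^*(u_{k,i+1}-u_{k,i}),z_{k,i+1}-z_{k,i}\rangle = -\langle\bbB^*\bbC^{-1}\bbB(z_{k,i}-z_{k,i-1}),z_{k,i+1}-z_{k,i}\rangle.$$
Via the Schur decomposition $\bbA = \tilde\bbA + \bbB^*\bbC^{-1}\bbB$, where $\tilde\bbA$ is still $\calZ$-elliptic by \eqref{ass:alt001}, this term combines with the $\bbA$-part into a Cauchy--Schwarz-type telescope in the seminorm $|\cdot|_{\bbB^*\bbC^{-1}\bbB}$ that can simply be dropped from the left-hand side. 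Bounding the $\calF$-contribution by Lemma \ref{lem.estDF}, using $\norm{\cdot}_\bbV\leq\norm{\cdot}_\calZ$, and dividing by $\norm{z_{k,i+1}-z_{k,i}}_\bbV$ (the vanishing case being trivial) delivers, for $i\geq 1$, the analog of \eqref{eq.Mief140}:
$$\eta\norm{z_{k,i+1}-z_{k,i}}_\bbV - \eta\norm{z_{k,i}-z_{k,i-1}}_\bbV + c_1\norm{z_{k,i+1}-z_{k,i}}_\calZ \leq c_2\,\calR(z_{k,i+1}-z_{k,i}).$$

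\textbf{Step 3 (initial step and summation).} The estimate for $\norm{z_{k,1}-z_{k,0}}_\bbV$ is derived as in \eqref{eq.Mief148}. For $k=1$ I invoke directly the assumption $\rmD_z\calE(0,u_0,z_0)\in\calV^*$. For $k\geq 2$ I use the approximate stationarity $-\rmD_z\calE(t_{k-1},u_{k-1},z_{k-1}) \in \partial\calR(0) + O\big(\eta\norm{z_{k-1}-z_{k-1,M_{k-1}^N-1}}_\bbV\big)$ coming from the last inclusion of the previous block, combined with the $\tau$-continuity of $\ell$ to pass from $t_{k-1}$ to $t_k$. Summing the estimate of Step~2 over $1\leq i\leq M_k^N-1$ collapses the $\eta$-differences telescopically, and a further summation over $k$ (picking up $\norm{\rmD_z\calE(0,u_0,z_0)}_{\calV^*}$ from $k=1$ and $T\norm{\ell}_{C^1}$ from the $\tau$-transitions) yields \eqref{alt:eq012a}. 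Finally, \eqref{alt:eq502} is read off from the telescoped leftover $\eta\norm{z_{k,M_k^N}-z_{k,M_k^N-1}}_\bbV$.

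\textbf{Main obstacle.} The cleanest treatment of the $u$--$z$ cross-term is the Schur-complement rewriting; this preserves the telescopic structure but requires verifying both the $\calZ$-ellipticity of $\tilde\bbA$ and the Cauchy--Schwarz monotonicity of the $\bbB^*\bbC^{-1}\bbB$-seminorm. A secondary subtlety is that the stopping criterion gives only approximate stationarity at the end of each block, which contributes an $O(\eta\delta)$ error that has to be tracked through the initial-step estimate; it is harmless for the uniform bounds stated here, but it is the reason why in the subsequent convergence theorem one requires $\eta\delta\to 0$.
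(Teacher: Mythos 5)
Your proposal differs from the paper's proof in the way the $u$--$z$ coupling term is handled, and the difference hides a genuine gap. The paper keeps the cross-term $\langle\bbB^*(u_{k,i}-u_{k,i+1}),z_{k,i+1}-z_{k,i}\rangle$ on the right-hand side, bounds $\norm{u_{k,i}-u_{k,i+1}}_\calU\leq C\norm{z_{k,i}-z_{k,i-1}}_\calV$ from the $u$-optimality, and then applies Ehrling's inequality to split $\norm{z_{k,i}-z_{k,i-1}}_\calV$ into a small multiple of $\norm{z_{k,i}-z_{k,i-1}}_\calZ$ plus an $\calR$-term. After dividing by $\norm{z_{k,i+1}-z_{k,i}}_\bbV$ this gives a one-step inequality of the form
\begin{align*}
\eta\norm{z_{k,i+1}-z_{k,i}}_\bbV + \tfrac{3\alpha}{4}\norm{z_{k,i+1}-z_{k,i}}_\calZ \leq \eta\norm{z_{k,i}-z_{k,i-1}}_\bbV + \tfrac{\alpha}{4}\norm{z_{k,i}-z_{k,i-1}}_\calZ + C(\cdots),
\end{align*}
where the unavoidable $\tfrac{\alpha}{4}\norm{z_{k,i}-z_{k,i-1}}_\calZ$ is absorbed by a telescope at the \emph{linear} level. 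Your Schur-complement rewriting $\bbA=\tilde\bbA+\bbB^*\bbC^{-1}\bbB$ is correct, and $\tilde\bbA$ is indeed $\calZ$-elliptic, but the resulting cancellation $\langle Q a_i,a_i\rangle-\langle Q a_{i-1},a_i\rangle\geq\tfrac12(|a_i|_Q^2-|a_{i-1}|_Q^2)$ (with $Q=\bbB^*\bbC^{-1}\bbB$, $a_i=z_{k,i+1}-z_{k,i}$) telescopes only at the \emph{squared} level. Once you divide by $\norm{a_i}_\bbV$, as you must to extract the arclength bound $\sum_i\norm{a_i}_\calZ$ rather than $\sum_i\norm{a_i}_\calZ^2$, the leftover $\bigl(|a_i|_Q^2-|a_{i-1}|_Q^2\bigr)/\norm{a_i}_\bbV$ no longer telescopes in $i$, and your claimed one-step inequality with no $\calZ$-cross-term on the right is not justified. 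If you instead sum the squared inequality first, the telescope closes, but then you control only $\sum_i\norm{a_i}_\calZ^2$, which does not bound $\sum_i\norm{a_i}_\calZ$ uniformly since the number of inner iterations $M_k^N$ is not bounded. So you cannot "simply drop" that term; you still need the Ehrling-type splitting that the paper uses, and the Schur decomposition buys nothing here.

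There is also a smaller slip in Step~1: you claim $\bbC(u_{k,1}-u_{k,0})=\ell_u(t_k)-\ell_u(t_{k-1})$ by asserting $\rmD_u\calE(t_{k-1},u_{k-1},z_{k-1})=0$. This is false: $u_{k-1}=u_{k-1,M_{k-1}^N}$ is stationary with respect to $z_{k-1,M_{k-1}^N-1}$, not with respect to $z_{k-1}=z_{k-1,M_{k-1}^N}$. The correct relation is $\bbC(u_{k,1}-u_{k,0})=\ell_u(t_k)-\ell_u(t_{k-1})-\bbB(z_{k,0}-z_{k,-1})$ with $z_{k,-1}:=z_{k-1,M_{k-1}^N-1}$, giving $\norm{u_{k,1}-u_{k,0}}_\calU\leq C(\tau+\norm{z_{k,0}-z_{k,-1}}_\calV)$. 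This extra term is exactly what makes \eqref{altc:eq104} link the $u$-arclength to the $z$-arclength and is harmless, but the omission would break the clean derivation of \eqref{altc:eq104} you sketch. You do flag the approximate stationarity issue in the "Main obstacle" paragraph, but the fix has to be carried out already in Step~1, not only tracked as an $O(\eta\delta)$ error later.
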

\begin{proof}
 Let $u_{k,0}:=u_{k-1}$ and $z_{k,-1}:=z_{k-1, M_{k-1}^N-1}$. Then from the 
minimality of $u_{k,i}$ in \eqref{alt:eq101} we deduce for $0\leq i\leq 
M_{k}^N-1$ that
\begin{align}
\label{altc:eq112}
 \norm{u_{k,i+1} - u_{k,i}}_\calU
 \leq C
 \left(\norm{z_{k,i} - z_{k,i-1}}_\calV 
+\delta_{i,0} \tau \norm{\ell_u}_{C^1([0,T];\calU^*)}\right),
\end{align}
where $\delta_{i,0}$ is the Kronecker-symbol. 
 Summation with respect to $k$ and $i$  yields \eqref{altc:eq104}. 
 In order to prove  \eqref{alt:eq012a} we proceed as follows: 
For 
$i\geq 1$ let $\xi_{k,i}:=-\rmD_z\calE(t_k,u_{k,i}, z_{k,i}) - \eta 
\bbV(z_{k,i}-z_{k,{i-1}})$. Since $\xi_{k,i}\in \partial\calR(z_{k,i} - 
z_{k,i-1})$, by the convexity and one-homogeneity of $\calR$ we deduce that 
$
 0\geq \langle \xi_{k,i} - \xi_{k,i+1}, z_{k,i+1} - z_{k,i}\rangle$, which can 
be rewritten as
 \begin{multline}
\label{altc:eq111}
 \eta\norm{z_{k,i+1}-z_{k,i}}^2_\bbV - 
\eta\langle\bbV(z_{k,i}-z_{k,i-1}),z_{k,i+1}-z_{k,i}\rangle +\langle 
\bbA(z_{k,i+1}-z_{k,i}),(z_{k,i+1}-z_{k,i})\rangle 
\\
\leq 
\langle\bbB^*(u_{k,i} -u_{k,i+1}),z_{k,i+1}-z_{k,i}\rangle + 
\langle\rmD_z\calF(z_{k,i}) -\rmD_z\calF(z_{k,i+1}),z_{k,i+1}-z_{k,i}\rangle.
\end{multline}
This is the analogue of \eqref{sol:eq111}. Taking into account estimate 
\eqref{altc:eq112} and applying Ehrling's Lemma (cf.\ \eqref{ehrling1}) with 
$\varepsilon=\alpha/4$ 
the first term on the right hand side of \eqref{altc:eq111} can be estimated as
\begin{align*}
 \abs{\langle\bbB^*(u_{k,i} -u_{k,i+1}),z_{k,i+1}-z_{k,i}\rangle} 
 \leq \left( 
 \frac{\alpha}{4}\norm{z_{k,i} - z_{k,i-1}}_\calZ + C\calR(z_{k,i} - 
z_{k,i-1})\right)\norm{z_{k,i+1} - z_{k,i}}_\bbV,
\end{align*}
while the second term on the right hand side is estimated with Lemma 
\ref{lem.estDF}, again with $\varepsilon=\alpha/4$, so that in total we arrive 
at
\begin{multline}
  \eta\norm{z_{k,i+1}-z_{k,i}}^2_\bbV - 
\eta\norm{z_{k,i}-z_{k,i-1}}_\bbV \norm{z_{k,i+1}-z_{k,i}}_\bbV  
+ \tfrac{3 \alpha}{4} \norm{z_{k,i+1}-z_{k,i}}_\calZ^2 
\\
\leq \left( 
 \tfrac{\alpha}{4}\norm{z_{k,i} - z_{k,i-1}}_\calZ + C(
 \calR(z_{k,i} - z_{k,i-1}) +\calR(z_{k,i+1} - z_{k,1}))\right)\norm{z_{k,i+1} 
- z_{k,i}}_\bbV, 
\end{multline}
and hence 
\begin{multline}
\label{altc:eq113}
  \eta\norm{z_{k,i+1}-z_{k,i}}_\bbV 
+ \tfrac{3 \alpha}{4} \norm{z_{k,i+1}-z_{k,i}}_\calZ 
\\
\leq  \eta\norm{z_{k,i}-z_{k,i-1}}_\bbV +
 \tfrac{\alpha}{4}\norm{z_{k,i} - z_{k,i-1}}_\calZ + C(\calR(z_{k,i} - 
z_{k,i-1}) +\calR(z_{k,i+1} - z_{k,i})), 
\end{multline}
which is valid for $1\leq i\leq M_k^N-1$. 
If $k\geq 2$ and $i=0$, 
then arguing as above we find \eqref{altc:eq111} with the additional 
term $\langle \ell_z(t_k) -\ell_z(t_{k-1}), z_{k,1} - z_{k,0}\rangle$ on the 
right hand side. This leads to \eqref{altc:eq113} for $i=0$ and with the 
additional term $\tau\norm{\ell}_{C^1([0,T];(\calU\times\calV)^*)}$ on the 
right hand side. 

Fix $k\geq 2$. Taking the sum of \eqref{altc:eq113} with respect to $1\leq 
i\leq M^N_k -1$ and adding the inequality for $i=0$ we obtain after exploiting 
several cancellations:
\begin{align}
 \eta & \norm{z_{k,i+1} - z_{k,i}}_\bbV
 +\tfrac{\alpha}{4}\norm{z_{k,i+1} - 
z_{k,i}}_\calZ 
+\tfrac{\alpha}{2} \sum_{j=0}^i \norm{z_{k,j+1} - z_{k,j}}_\calZ 
\nonumber \\
&\leq \eta \norm{z_{k,0} - z_{k,-1}}_\bbV +\tfrac{\alpha}{4} \norm{z_{k,0} - 
z_{k,-1}}_\calZ 
+ C\big(\tau\norm{\ell}_{C^1([0,T];(\calU\times \calZ)^*)} + \sum_{j=0}^{i+1} 
\calR(z_{k,j} - z_{k,j-1})\big)\,,
\label{altc:eq803}
\end{align}
which is valid for $0\leq i\leq M^N_k -1$.

Let us now discuss the case $k=1$ and $i=0$. Again we have $\xi_{1,1}\in 
\partial\calR( z_{1,1} - z_{1,0})$ and thus 
$\calR(z_{1,1}-z_{1,0})=\langle \xi_{1,1}, z_{1,1} - z_{1,0}\rangle$. Adding 
$\langle - \rmD_z\calE(0,u_0,z_0),z_{1,1} - z_{1,0}\rangle$ on both sides 
yields after rearranging the terms and exploiting the positivity of $\bbA$
\begin{align}
\calR&(z_{1,1} - z_{1,0}) +\eta\norm{z_{1,1} - z_{1,0}}_\bbV^2 + 
\alpha\norm{z_{1,1} - z_{1,0}}_\calZ^2
\nonumber \\ 
&\leq \langle -\rmD_z\calE(0,u_0,z_0), z_{1,1} - z_{1,0}\rangle  + \langle 
\ell (0) - \ell(\tau), z_{1,1} - z_{1,0}\rangle 
+\langle \bbB^*(u_0 - u_{1,1}), z_{1,1} -z_{1,0}\rangle
\nonumber\\
&\leq C \big( \norm{\rmD_z\calE(0,u_0,z_0)}_{\calV^*} 
+ \tau \norm{\ell}_{C^{1}([0,T];(\calU\times\calV)^*)}
\big)
\norm{z_{1,1} - z_{1,0}}_{\bbV}. 
\end{align}
From this inequality we deduce that
\begin{align}
 \eta\norm{z_{1,1} - z_{1,0}}_\bbV + 
\alpha\norm{z_{1,1} - z_{1,0}}_\calZ
\leq C \big( \norm{\rmD_z\calE(0,u_0,z_0)}_{\calV^*} 
+ \tau \norm{\ell}_{C^{1}([0,T];(\calU\times\calV)^*)}
\big). 
\label{altc:eq802}
\end{align}
For $k=1$, taking the sum of \eqref{altc:eq113} with respect to $i$ 
 and 
adding \eqref{altc:eq802} we obtain  
\begin{align}
 \eta \norm{z_{1,i+1} -z_{1,i}}_\bbV
 & + \tfrac{\alpha}{4}\norm{z_{1,1} - z_{1,0}}_\calZ + \tfrac{\alpha}{4} 
\norm{z_{1,i+1} - z_{1,i}}_\calZ 
+\tfrac{\alpha}{2} \sum_{j=0}^i\norm{z_{1,j+1} - z_{1,j}}_\calZ 
\nonumber \\
 &\leq 
 C\big(\tau \norm{\ell}_{C^1([0,T];(\calU\times \calV)^*)} 
 +\norm{\rmD_z\calE(0,u_0,z_0)}_{\calV^*} + \sum_{j=0}^i \calR(z_{1,j+1} - 
z_{1,j})
\big)\,, 
\label{altc:eq804}
\end{align}
which is valid for $0\leq i\leq M^N_1 -1$. 

For arbitrary $k\in \{1,\ldots,N\}$ and $i\in \{0,\ldots, M_k^N -1\}$ the 
summation of \eqref{altc:eq803} and \eqref{altc:eq804} up to $(k,i)$ yields 
(with $z_{1,-1}:= z_0$ and omitting the term $\tfrac{\alpha}{4}\norm{z_{1,1} - 
z_{1,0}}_\calZ$ on the left hand side) 
\begin{align}
\eta & \norm{z_{k,i+1} - z_{k,i}}_\bbV 
+ \tfrac{\alpha}{4} 
\norm{z_{k,i+1} - z_{k,i}}_\calZ
+ \sum_{s=1}^{k-1} 
\big( \eta \norm{z_{s,M^N_s} - z_{s,M^N_s -1}}_\bbV 
  + \tfrac{\alpha}{4} 
\norm{z_{s,M^N_s} - z_{s,M^N_s -1}}_\calZ\big) 
\nonumber\\
&\qquad\qquad\qquad\qquad +\tfrac{\alpha}{2} 
\sum_{s=1}^{k-1}\sum_{j=0}^{M^N_{k-1}-1}
\norm{z_{s,j+1} - z_{s,j}}_\calZ + \tfrac{\alpha}{2}\sum_{j=0}^i 
\norm{z_{k,j+1} - z_{k,j}}_\calZ 
\nonumber\\
&\leq 
 \sum_{s=1}^k\big(\eta \norm{z_{s,0} - z_{s,-1}}_\bbV 
+\tfrac{\alpha}{4} 
\norm{z_{s,0} - z_{s,-1}}_\calZ \big) 
\nonumber\\
&\phantom{\leq} + C\Big(k\tau\norm{\ell}_{C^1([0,T];(\calU\times \calV)^*)} 
+\norm{\rmD_z\calE(0,u_0,z_0)}_{\calV^*} \Big.
\nonumber\\
&
\qquad\qquad\qquad\qquad 
+\sum_{s=1}^{k-1}\sum_{j=0}^{M_{k-1}^N-1} \calR(z_{s,j} - z_{s,j-1}) 
+\sum_{j=0}^{i+1}\calR(z_{k,j} - z_{k,j-1})
\Big)\,. 
\label{altc:eq805}
\end{align}
Observe that $z_{s,0} = z_{s-1,M_{s-1}^N}$ and $z_{s,-1}= 
z_{s-1,M_{s-1}^N-1}$ and hence 
\[
 \sum_{s=1}^{k-1} \norm{z_{s,M^N_s} - z_{s,M^N_s -1}}_\bbV = 
 \sum_{\sigma=2}^{k}  \norm{z_{\sigma,0} - z_{\sigma,-1}}_\bbV.  
\]
Thus, the previous estimate reduces to 
\begin{multline}
 \eta  \norm{z_{k,i+1} - z_{k,i}}_\bbV 
+ \tfrac{\alpha}{4} 
\norm{z_{k,i+1} - z_{k,i}}_\calZ
 +\tfrac{\alpha}{2} \sum_{s=1}^{k-1}\gamma_s
 + \tfrac{\alpha}{2}\sum_{j=0}^i \norm{z_{k,j+1} - z_{k,j}}_\calZ 
\\
\leq 
  C\Big(k\tau\norm{\ell}_{C^1([0,T];(\calU\times \calV)^*)} 
+\norm{\rmD_z\calE(0,u_0,z_0)}_{\calV^*} 
\Big.
\\
+ 
\sum_{s=1}^{k-1}\sum_{j=0}^{M_{k-1}^N-1} \calR(z_{s,j} - z_{s,j-1}) 
+\sum_{j=0}^{i+1}\calR(z_{k,j} - z_{k,j-1})
\Big)\,,
\label{altc:eq806}
\end{multline}
which implies \eqref{alt:eq012a} and  \eqref{alt:eq502}. Finally, 
\eqref{altc:eq115} is a consequence of \eqref{altc:eq112} and 
\eqref{alt:eq502}. 
\end{proof}

Like in the previous section interpolating curves will be defined 
 with respect to an artificial arclength 
parameter. For $k\geq 1$ let 
\begin{align*}
 s_0:=0,\quad s_k:= s_{k-1} + \tau + \sum_{i=1}^{M_k^N} \Big( 
 \norm{u_{k,i}- u_{k,i-1}}_\calU + \norm{z_{k,i} - z_{k,i-1}}_\calV\Big)\,. 
\end{align*}
Thanks to Proposition \ref{altc:proparclength} we have $\sup_{N\in 
\N,\delta>0,\eta>0} s_N <\infty$. Let furthermore
\begin{align}
\label{altc:definterpol1}
 s_{k,-1}:=s_{k-1},\,\, s_{k,0}=s_{k-1} + \tau
\end{align}
and for $i\geq 1$
\begin{align*}
 s_{k,i}:= s_{k,i-1} + \norm{u_{k,i} - u_{k,i-1}}_\calU + \norm{z_{k,i} - 
z_{k,i-1}}_\calV\, 
\end{align*}
with  $s_{k,M_k^N} = s_k$. The piecewise affine interpolations are 
given by $(1\leq k\leq N$, $1\leq i\leq M_k^N$)
\begin{subequations}
\label{altc:definterpol5}
\begin{align}
 \text{for }s\in [s_{k-1}, s_{k,0}):\qquad&t_\tau(s)=t_{k-1} + (s-s_{k-1}), 
 \,\, 
u_\tau(s)= u_{k-1}, \,\,
z_\tau(s)= z_{k-1},
\\
\text{for }s\in [s_{k,i-1},s_{k,i}):\qquad
& 
u_\tau(s):= u_{k,i-1} + \frac{s - s_{k,i-1}}{\sigma_\tau(s)}(u_{k,i}- 
u_{k,i-1}), 
\\
& 
z_\tau(s):= z_{k,i-1} + \frac{s - 
s_{k,i-1}}{\sigma_\tau(s)}(z_{k,i} - z_{k,i-1})\\
&t_\tau(s):=t_k,\,\, 
\end{align}
\end{subequations}
where the increment $\sigma_\tau$ is defined as
\begin{align}
 \sigma_\tau(s):=\begin{cases}
                  \tau&\text{for } s\in (s_{k-1},s_{k,0})\\
                  \norm{u_{k,i} - u_{k,i-1}}_\calU + \norm{z_{k,i} - 
z_{k,i-1}}_\calV&\text{for } s\in (s_{k,i-1},s_{k,i})\\
0&\text{otherwise}
                 \end{cases}\,.
\end{align}
Observe that in \eqref{altc:definterpol5} we do not divide by zero. 
The piecewise left or right continuous interpolants are defined as follows for 
$g\in \{t,u,z\}$: Let $1\leq k\leq N$, $0\leq i\leq M_k^N$. Then  
\begin{align*}
 \overline{g}_\tau(s)&= g_\tau(s_{k,i}) \qquad\text{for }s\in 
(s_{k,i-1},s_{k,i}],\\
 \underline{g}_\tau(s)&= g_\tau(s_{k,i-1}) \quad \text{for }s\in 
[s_{k,i-1},s_{k,i}).
\end{align*}
With this,  $\sigma_\tau(s)=\abs{\overline{t}_\tau(s)-\underline{t}_\tau(s)} 
+ \norm{\overline{u}_\tau(s) - \underline{u}_\tau(s)}_\calU + 
\norm{\overline{z}_\tau(s) - \underline{z}_\tau(s)}_\calV$.  
By definition, for almost all $s$ we have $t_\tau'(s) + 
\norm{u_\tau'(s)}_\calU +\norm{z_\tau'(s)}_\calV=1$, $t_\tau(s_N)=T$ and 
$z_\tau$ is uniformly bounded in $L^\infty((0,s_N);\calZ)\cap 
W^{1,\infty}((0,s_N);\calV)$ while $u_\tau$ is uniformly bounded in 
$W^{1,\infty}((0,s_N);\calU)$. Moreover, due to Proposition 
\ref{altc:proparclength} for all $s\in [0,s_N]$ and uniformly in $N$, 
$\eta$ and $\delta$ we have
\begin{align}
 \norm{u_\tau(s) -\overline u_\tau(s)}_\calU + \norm{\underline{u}_\tau(s) - 
\overline{u}_\tau(s)}_\calU&\leq C(\tau + \eta^{-1}),
\label{altc:eq812}\\
\norm{z_\tau(s) - \overline{z}_\tau(s)}_\calV + \norm{\underline{z}_\tau(s) - 
\overline{z}_\tau(s)}_\calV&\leq C\eta^{-1}, 
\label{altc:eq813}
\\
\abs{\sigma_\tau(s)}&\leq C(\tau +\eta^{-1}).
\label{altc:eq814}
\end{align}
Let us finally define
\begin{align*}
 J_\tau(s):=\begin{cases}
             \eta \bbV(z_{k-1,M^N_{k-1}} - z_{k-1,M^N_{k-1}-1})&\text{for }s\in 
(s_{k-1}, s_{k,0}) \text{ and }k\geq 2\\
0& \text{otherwise}
            \end{cases}\,.
\end{align*}
Thanks to the stopping criterion \eqref{alt:eq103} we have 
\begin{align*}
 \norm{J_\tau}_{L^\infty((0,s_N);\calV^*)}\leq C\eta\delta.
\end{align*}
Moreover, $\norm{J_\tau(s)}_{\calV^*}\norm{z_\tau'(s)}_\calV=0$ for almost 
all $s\in [0,s_N]$. The next proposition is the analogue to Proposition 
\ref{prop:sol004}. 
We recall the notation $\calR_\mu(v)=\calR(v) + \frac{\mu}{2}\norm{v}^2_\bbV$. 

\begin{proposition}[Discrete energy dissipation estimate] 
 \label{altc:prop02}
 $ $\\
 Assume \eqref{sol:assumptionsa}--\eqref{sol:assumptionsd}, \eqref{ass:alt001} 
and that $-\rmD_z\calE(0,u_0,z_0)\in \partial\calR(0)$. The interpolating 
curves satisfy the following relation for every $\alpha<\beta\in [0,s_N]$ and 
with $q_\tau(s):=(u_\tau(s),z_\tau(s))$:
\begin{multline}
 \label{altc:eq807}
  \calE(t_\tau(\beta) , q_\tau(\beta))- \calE(t_\tau(\alpha),q_\tau(\alpha))   
  \\
+\int_\alpha^\beta  \calR_{\sigma_\tau(s)\eta}(z_\tau'(s)) + 
 \calR_{\sigma_\tau(s)\eta}^*\big(-\rmD_z\calE(\underline{t}_\tau(s),
 \overline{u}_\tau(s),\overline{z}_\tau(s)) - J_\tau(s)\big)\ds 
 \\
 =\int_\alpha^\beta \partial_t\calE(t_\tau(s),q_\tau(s))t_\tau'(s)\ds 
 + \int_\alpha^\beta \langle \rmD_u\calE(t_\tau(s),q_\tau(s)),u_\tau'(s)\rangle 
 \ds 
 +\int_\alpha^\beta r_\tau(s)\ds, 
\end{multline}
where $r_\tau=\langle \rmD_z\calE(t_\tau, q_\tau) - 
\rmD_z\calE(\underline{t}_\tau,\overline{u}_\tau,\overline{z}_\tau),
z_\tau'\rangle$. Moreover, 
\begin{align}
 \int_\alpha^\beta r_\tau(s)\ds& \leq C(\beta - \alpha)(\tau + \eta^{-1}),
 \label{altc:eq810}\\
  \norm{\rmD_u\calE(t_\tau,q_\tau)}_{L^\infty((0,s_N);\calU^*)}
 &\leq C (\tau + \eta^{-1}), 
 \label{altc:eq811}
\end{align}
and the constant $C$ independent of  $\eta$, $N$ and $\delta$. 
\end{proposition}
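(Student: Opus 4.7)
The plan is to derive the identity \eqref{altc:eq807} from the Fenchel--Young equality associated with the Euler--Lagrange inclusion of \eqref{alt:eq102}, combined with the smooth chain rule for $s\mapsto \calE(t_\tau(s),q_\tau(s))$, and to establish the bounds \eqref{altc:eq810}--\eqref{altc:eq811} for $r_\tau$ and $\rmD_u\calE$ by careful termwise inspection using the \emph{a priori} bounds from Propositions~\ref{altc:prop01}--\ref{altc:proparclength}. First I would extract from \eqref{alt:eq102} the discrete subdifferential inclusion
\[
 -\rmD_z\calE(t_k,u_{k,i},z_{k,i})-\eta\bbV(z_{k,i}-z_{k,i-1})\in\partial\calR(z_{k,i}-z_{k,i-1}),\quad 1\leq k\leq N,\ 1\leq i\leq M_k^N.
\]
Exploiting one-homogeneity of $\calR$ and the sum rule $\partial\calR_\mu(v)=\partial\calR(v)+\mu\bbV v$, this translates, on a viscous subinterval $(s_{k,i-1},s_{k,i})$ with $i\geq 1$, to $-\rmD_z\calE(\underline{t}_\tau(s),\overline{u}_\tau(s),\overline{z}_\tau(s))\in\partial\calR_{\sigma_\tau(s)\eta}(z_\tau'(s))$, because there $J_\tau\equiv 0$, $\underline{t}_\tau(s)=t_k$, $\overline{u}_\tau(s)=u_{k,i}$, $\overline{z}_\tau(s)=z_{k,i}$, and $\sigma_\tau(s)z_\tau'(s)=z_{k,i}-z_{k,i-1}$ by construction.

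On a time-update interval $(s_{k-1},s_{k,0})$ with $k\geq 2$ one has $z_\tau'=0$, and the inclusion with $J_\tau(s)=\eta\bbV(z_{k-1}-z_{k-1,M_{k-1}^N-1})$ is precisely the Euler--Lagrange condition for the last viscous step at time index $k-1$, combined with the one-homogeneity inclusion $\partial\calR(v)\subset\partial\calR(0)$; for $k=1$ the standing assumption $-\rmD_z\calE(0,u_0,z_0)\in\partial\calR(0)$ takes its place. The global inclusion $-\rmD_z\calE(\underline{t}_\tau,\overline{u}_\tau,\overline{z}_\tau)-J_\tau\in\partial\calR_{\sigma_\tau\eta}(z_\tau')$ then follows, and Fenchel--Young together with $\langle J_\tau,z_\tau'\rangle=0$ a.e.\ (since $J_\tau$ is supported on $\{z_\tau'=0\}$) yields
\[
 \calR_{\sigma_\tau\eta}(z_\tau'(s))+\calR_{\sigma_\tau\eta}^*\big(-\rmD_z\calE(\underline{t}_\tau,\overline{u}_\tau,\overline{z}_\tau)-J_\tau\big)=\langle -\rmD_z\calE(\underline{t}_\tau,\overline{u}_\tau,\overline{z}_\tau),z_\tau'(s)\rangle.
\]
Adding and subtracting $\rmD_z\calE(t_\tau,q_\tau)$, integrating on $[\alpha,\beta]$ and invoking the chain rule for $\calE\in C^2$ then produces \eqref{altc:eq807} with the remainder $r_\tau$ as stated.

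For \eqref{altc:eq810} I would split $r_\tau$ according to the structure of $\calE$ into contributions from $\bbA$, $\bbB^*$, $\rmD_z\calF$ and $\ell_z$. The $\ell_z$-contribution vanishes since $\underline{t}_\tau=t_\tau$ on every viscous subinterval and $z_\tau'=0$ on every time-update interval. Using $z_\tau(s)-\overline{z}_\tau(s)=(s-s_{k,i})z_\tau'(s)$ on $(s_{k,i-1},s_{k,i})$, the $\bbA$-contribution equals $(s-s_{k,i})\langle\bbA z_\tau',z_\tau'\rangle\leq 0$ and may be dropped. The $\bbB^*$-contribution is bounded by $\|\bbB\|\,\|u_\tau-\overline{u}_\tau\|_\calU\,\|z_\tau'\|_\calV\leq C(\tau+\eta^{-1})$ thanks to \eqref{altc:eq812} and $\|z_\tau'\|_\calV\leq 1$. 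For the nonlinear part, Lemma~\ref{lem.estDF} applied to the pair $(z_\tau,\overline{z}_\tau)$ with $\varepsilon$ small enough that the emerging $\|z_\tau'\|_\calZ^2$ term is absorbed by the nonpositive $\bbA$-contribution, combined with $\sigma_\tau\leq C(\tau+\eta^{-1})$ from \eqref{altc:eq814}, yields a pointwise bound of order $C(\tau+\eta^{-1})$ for $r_\tau$; integration over $[\alpha,\beta]$ gives \eqref{altc:eq810}.

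For \eqref{altc:eq811} I would write $\rmD_u\calE(t_\tau(s),q_\tau(s))=\bbC u_\tau(s)+\bbB z_\tau(s)-\ell_u(t_\tau(s))$ and insert the optimality relation $\bbC u_{k,i}+\bbB z_{k,i-1}=\ell_u(t_k)$ coming from \eqref{alt:eq101}. On a viscous subinterval with $i\geq 2$, interpolation plus subtraction of two consecutive such relations reduces $\rmD_u\calE$ to a linear combination of $\bbB(z_{k,i-1}-z_{k,i-2})$ and $\bbC(u_{k,i}-u_{k,i-1})$, with $\calU^*$-norms of orders $\eta^{-1}$ and $\tau+\eta^{-1}$ via \eqref{alt:eq502} and \eqref{altc:eq115} respectively; for $i=1$ the same decomposition produces an additional $\ell_u(t_{k-1})-\ell_u(t_k)=O(\tau)$ and $\bbB(z_{k-1}-z_{k-1,M_{k-1}^N-1})=O(\eta^{-1})$. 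A parallel argument covers the time-update interval. The main obstacle I anticipate is the correct bookkeeping across the transition $k-1\mapsto k$: verifying that $J_\tau$ on $(s_{k-1},s_{k,0})$ matches \emph{exactly} the $\eta\bbV$-term in the Euler--Lagrange condition of the last viscous step at time index $k-1$ is what closes the subdifferential inclusion globally, and it also explains why the stopping threshold $\delta$ does not appear on the right-hand sides of \eqref{altc:eq810}--\eqref{altc:eq811}.
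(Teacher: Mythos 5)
Your proof is correct and follows essentially the same route as the paper: derive the global subdifferential inclusion $-\rmD_z\calE(\underline{t}_\tau,\overline{u}_\tau,\overline{z}_\tau)-J_\tau\in\partial\calR_{\sigma_\tau\eta}(z_\tau')$ case by case (viscous subintervals, time-update intervals for $k\geq 2$ via the stopping step, $k=1$ via the assumption on $z_0$), convert it into a Fenchel--Young equality using $\langle J_\tau,z_\tau'\rangle=0$ a.e., combine with the chain rule for $s\mapsto\calE(t_\tau(s),q_\tau(s))$ to obtain \eqref{altc:eq807}, and bound $r_\tau$ and $\rmD_u\calE$ termwise via the $\bbA$, $\bbB$, $\rmD_z\calF$, $\ell$ structure of $\calE$ together with the a priori estimates \eqref{alt:eq502}, \eqref{altc:eq115}, \eqref{altc:eq812}--\eqref{altc:eq814}. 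The only cosmetic difference is in \eqref{altc:eq811}, where the paper writes $\rmD_u\calE(t_\tau,q_\tau)$ directly as a difference from the vanishing optimal value $\rmD_u\calE(t_\tau,\overline{u}_\tau,\underline{z}_\tau)=0$ rather than substituting and subtracting the linear optimality relations, but this yields the same estimate.
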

\begin{proof}
 By the chain rule, for every $\alpha <\beta\in [0,s_N]$ we deduce 
 \begin{multline}
  \calE(t_\tau(\beta),q_\tau(\beta)) -\calE(t_\tau(\alpha), q_\tau(\alpha))
  \\
  = 
  \int_\alpha^\beta\partial_t\calE(t_\tau(s), q_\tau(s))t'_\tau(s)\ds + 
\int_\alpha^\beta 
  \langle\rmD_u\calE(t_\tau(s), q_\tau(s)),u'_\tau (s)\rangle\ds  
+\int_\alpha^\beta \langle \rmD_z\calE(t_\tau(s), q_\tau(s)),z_\tau'(s)\rangle 
\ds\,.
\label{altc:eq809}
 \end{multline}
For the term involving $\rmD_z\calE$ we proceed as follows: 
Let first $k,i\geq 1$ and $s\in (s_{k,i-1},s_{k,i})$. Then from the minimality 
of $z_{k,i}$ we obtain  
$ -\rmD_z\calE(t_k,u_{k,i},z_{k,i}) - \eta\bbV(z_{k,i}- z_{k,i-1})\in 
\partial\calR(z_{k,i} - z_{k,i-1})$, which can be rewritten as
\begin{align}
 \label{altc:eq808}
 -\rmD_z\calE(\underline{t}_\tau(s),\overline{u}_\tau(s), \overline{z}_\tau(s)) 
- J_\tau(s)\in \partial\calR_{\sigma_\tau(s)\eta}(z_\tau'(s))\,.
\end{align}
Next, for $k\geq 2$ and $s\in (s_{k,-1},s_{k,0})$ we have 
\begin{multline*}
-\rmD_z\calE(t_{k-1}, u_{k-1,M^N_{k-1}},z_{k-1,M_{k-1}^N}) - \eta 
\bbV(z_{k-1,M_{k-1}^N} - z_{k-1,M_{k-1}^N-1}) 
\\
\in \partial\calR( 
z_{k-1,M_{k-1}^N} - z_{k-1,M_{k-1}^N-1} )\subset\partial\calR(0).
\end{multline*} 
Since $\sigma_\tau(s)\eta\bbV z_\tau'(s)=0$, the previous relation can be 
rewritten in the form \eqref{altc:eq808}, as well. Finally for $k=1$ and $s\in 
(s_0,s_{1,0})$ thanks to the assumptions we have $-\rmD_z\calE(0,u_0,z_0) \in 
\partial\calR(0)$, which again can be rewritten in the form \eqref{altc:eq808}. 
Thus, \eqref{altc:eq808} is valid for almost all $s\in (0,s_N)$. By convex 
analysis, relation \eqref{altc:eq808} is equivalent to
\begin{multline*}
 \calR_{\sigma_\tau(s)\eta}(z_\tau'(s)) 
+\calR_{\sigma_\tau(s)\eta}^*(-\rmD_z\calE(\underline{t}_\tau(s),\overline{u}
_\tau(s),\overline{z}_\tau(s))- J_\tau(s))
\\
= \langle -\rmD_z\calE(\underline{t}_\tau(s),\overline{u}
_\tau(s),\overline{z}_\tau(s)),z_\tau'(s)\rangle - \langle J_\tau(s), 
z_\tau'(s)\rangle
= 
-\langle \rmD_z\calE(t_\tau(s),q_\tau(s)),z_\tau'(s)\rangle 
+ r_\tau(s)
\end{multline*}
with $r_\tau(s)$ as in the proposition and taking into account that $\langle 
J_\tau(s), z_\tau'(s)\rangle=0$. Inserting this identity into 
\eqref{altc:eq809} results in \eqref{altc:eq807}. 

For proving \eqref{altc:eq811} observe first that due to the minimality of the 
$u_{k,i}$ and the assumption on $u_0$   we have
\begin{multline*}
 \rmD_u\calE(t_\tau(s),q_\tau(s))=
\\
\begin{cases}
\rmD_u\calE(t_\tau(s),q_\tau(s))
 -\rmD_u\calE(t_\tau(s),\overline{u}_\tau(s), \underline{z}_\tau(s))
 &\text{ for } k,i\geq 1 \text{ and }s\in 
 (s_{k,i-1},s_{k,i})\\
 \rmD_u\calE(t_\tau(s),q_\tau(s)) - 
 \rmD_u\calE(\underline{t}_\tau(s),\underline{u}_\tau(s), z_{k-1, M_{k-1}^N-1})
 &\text{ for }k\geq 2 \text{ and }s\in (s_{k,-1},s_{k,0})\\
 \rmD_u\calE(t_\tau(s),q_\tau(s)) 
 -\rmD_u\calE(\underline{t}_\tau(s),\underline{u}_\tau(s),\underline{z}_\tau(s))
 &\text{ for }s\in (s_0,s_{1,0})
 \end{cases}
 \,.
\end{multline*}
Taking into account estimates \eqref{altc:eq812}--\eqref{altc:eq813} we 
arrive at \eqref{altc:eq811}. 
By similar arguments as  those in the proof 
of Proposition \ref{prop:sol004} we finally obtain \eqref{altc:eq810} applying  
again  Lemma \ref{lem.estDF} and the estimates  
\eqref{altc:eq812}--\eqref{altc:eq813}.
\end{proof}
We are now ready to pass to the limit.
\begin{theorem}
\label{thm:alt02}
 Assume   \eqref{sol:assumptionsa}--\eqref{sol:assumptionsd}, 
\eqref{ass:alt001} 
and that $-\rmD_z\calE(0,u_0,z_0)\in \partial\calR(0)$. Let the sequences 
$(u^\tau_{k,i})_{k,i}\subset\calU$ and $(z^\tau_{k,i})_{k,i}$ with $\tau=T/N$ 
be generated by \eqref{alt:eq101}--\eqref{alt:eq103}. 
\\
 For every sequence $N_n\to\infty,\eta_n\to \infty$, $\tau_{N_n}\to 0$ and 
$\delta_n\to 0$ with $\eta_n\delta_n\to 0$  
  there exists a (not 
relabeled) subsequence 
$(s_{N_n},t_{\tau_n},u_{\tau_n},z_{\tau_n})_{n\in \N}$ of the interpolating 
curves, a number $S>0$ and functions $\hat 
t\in 
W^{1,\infty}((0,S);\R)$, 
$\hat u\in W^{1,\infty}((0,S);\calU)$, 
$\hat z\in 
W^{1,\infty}((0,S);\calV)\cap L^\infty((0,S);\calZ)$ (with $\hat q:=(\hat 
u,\hat z)$) such that for $n\to \infty$ 
(we omit the index $n$)
\begin{gather}
 s_{N}\to S,\label{altc:eq815}\\
   t_{\tau}\overset{*}{\rightharpoonup} \hat t \text{ in 
 }W^{1,\infty}((0,S);\R),\quad 
  \underline{t}_\tau, t_{\tau}(s)\to\hat t(s) \text{ for every }s\in  [0,S],
 \label{altc:eq816} \\
   u_{\tau}\overset{*}{\rightharpoonup} \hat u
 \text{ weakly$*$ in }W^{1,\infty}((0,S);\calU),
 \label{altc:eq817} \\
  z_{\tau}\overset{*}{\rightharpoonup} \hat z
 \text{ weakly$*$ in }W^{1,\infty}((0,S);\calV)\cap L^\infty((0,S);\calZ),
 \label{altc:eq818} \\
  \overline{u}_\tau(s),\underline{u}_\tau(s), u_{\tau}(s)\rightharpoonup \hat 
u(s) \text{ weakly in 
}\calU \text{ for 
 every }s\in [0,S]\,,
 \label{altc:eq825}
 \\
 \overline{z}_\tau(s),\underline{z}_\tau(s),  z_{\tau}(s)\rightharpoonup \hat 
z(s) \text{ weakly in 
}\calZ \text{ for 
 every }s\in [0,S]\,.
 \label{altc:eq819}
\end{gather}
Moreover, the limit functions satisfy $\hat t(0)=0$, $\hat t(S)=T$, $\hat 
z(0)=z_0$, $\hat u(0)=u_0$ and  for a.a.\ $s\in [0,S]$
\begin{gather}
 \hat t'(s)\geq 0,\, \hat t'(s) + \norm{\hat u'(s)}_\calU + \norm{\hat 
z'(s)}_\calV \leq 1\,,
\label{altc:eq820}\\
\hat t'(s) \dist_{\calV^*}\left(-\rmD_z\calE(\hat t(s), \hat 
q(s)),\partial\calR(0)\right) =0 
\label{altc:eq821}
\end{gather}
together with the energy dissipation identity
\begin{multline}
\label{altc:eq822}
 \calE(\hat t(s),\hat q(s)) 
+ \int_0^s \calR(\hat z'(r)) +\norm{\hat z'(r)}_\calV\dist(-\rmD_z\calE(\hat 
t(r),\hat q(r)),\partial\calR(0))\dr 
\\
= \calE(0,q(0)) + \int_0^s\partial_t\calE(\hat t(r),\hat q(r)) \hat t'(r)\dr\,
\end{multline}
for all $s\in [0,S]$. Finally, for all $s\in [0,S]$ we have 
$\rmD_u\calE(\hat t(s),\hat u(s),\hat z(s))=0$. 
\end{theorem}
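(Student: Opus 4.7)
The plan is to mirror the reparametrization strategy used for Theorems \ref{thm.Mie01} and \ref{sol:thm01}, now applied to the discrete energy-dissipation estimate \eqref{altc:eq807}. First, the uniform bounds collected in Propositions \ref{altc:prop01} and \ref{altc:proparclength} provide all needed a priori estimates: $\sup_N s_N<\infty$, a uniform $W^{1,\infty}$-bound of $t_\tau$, $u_\tau$, $z_\tau$ in $\R$, $\calU$, $\calV$ respectively, and a uniform $L^\infty$-bound of $z_\tau$ in $\calZ$. Banach--Alaoglu supplies a (not relabelled) subsequence realising \eqref{altc:eq815}--\eqref{altc:eq818}. Pointwise weak convergence \eqref{altc:eq825}--\eqref{altc:eq819} of $z_\tau(s)$ in $\calZ$ and $u_\tau(s)$ in $\calU$ for every $s$ follows from a Helly/Arzel\`a--Ascoli argument based on equi-Lipschitz-continuity in the weaker spaces and uniform boundedness in the stronger spaces; the compactness $\calZ\Subset\calV$ and the estimates \eqref{altc:eq812}--\eqref{altc:eq814} together with $\tau,\eta^{-1}\to 0$ ensure that $\overline{g}_\tau(s)$ and $\underline{g}_\tau(s)$ share the same limit as $g_\tau(s)$ for $g\in\{u,z\}$. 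The initial conditions and $\hat t(S)=T$ are inherited from the discrete ones, while the bound \eqref{altc:eq820} is an instance of weak lower semicontinuity of the norms applied to $t_\tau'(s)+\norm{u_\tau'(s)}_\calU+\norm{z_\tau'(s)}_\calV=1$.

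Next I would pass to the limit in the optimality conditions. The uniform estimate \eqref{altc:eq811} yields $\rmD_u\calE(t_\tau(s),q_\tau(s))\to 0$ in $\calU^*$ uniformly in $s$; since $\rmD_u\calE(t,\cdot,\cdot)$ is affine, the pointwise convergences \eqref{altc:eq825}--\eqref{altc:eq819} give $\rmD_u\calE(\hat t(s),\hat u(s),\hat z(s))=0$ for every $s\in[0,S]$. The complementarity \eqref{altc:eq821} is then proved as in Theorem \ref{sol:thm01}: the inclusion \eqref{altc:eq808} combined with \eqref{alt:eq502} provides a uniform $\calV^*$-bound on $\rmD_z\calE(\underline t_\tau,\overline u_\tau,\overline z_\tau)$, whence by \eqref{ass.fweakconv} and weak $\calZ$--$\calZ^*$ continuity of the linear parts, $\rmD_z\calE(\underline t_\tau(s),\overline u_\tau(s),\overline z_\tau(s))\rightharpoonup \rmD_z\calE(\hat t(s),\hat q(s))$ weakly in $\calV^*$ for every $s$. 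Lower semicontinuity of $\dist_{\calV^*}(\cdot,\partial\calR(0))$, combined with Lemma \ref{app_prop:lsc2} applied to the discrete identity $t_\tau'(s)\dist_{\calV^*}\bigl(-\rmD_z\calE(\underline t_\tau(s),\overline u_\tau(s),\overline z_\tau(s))-J_\tau(s),\partial\calR(0)\bigr)=0$ and the bound $\norm{J_\tau}_{L^\infty((0,s_N);\calV^*)}\leq C\eta\delta\to 0$, transfers \eqref{altc:eq821} to the limit.

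The technical heart is the energy identity \eqref{altc:eq822}. The Fenchel--Young lower bound
\[
 \calR_\mu(v)+\calR^*_\mu(\zeta)\geq \calR(v)+\norm{v}_\bbV\dist_{\calV^*}(\zeta,\partial\calR(0)) \quad\text{for all }\mu>0,\, v\in\calV,\,\zeta\in\calV^*,
\]
applied with $\mu=\sigma_\tau(s)\eta$, $v=z_\tau'(s)$, $\zeta=-\rmD_z\calE(\underline t_\tau(s),\overline u_\tau(s),\overline z_\tau(s))-J_\tau(s)$, reduces the dissipation term in \eqref{altc:eq807} to the standard BV form; since $J_\tau\to 0$ in $L^\infty((0,s_N);\calV^*)$, its contribution disappears, and Proposition \ref{app_prop:lsc} yields weak lower semicontinuity of the resulting integral under the convergences \eqref{altc:eq816}--\eqref{altc:eq819}. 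On the right-hand side of \eqref{altc:eq807}, the remainder $\int r_\tau\,\ds$ vanishes by \eqref{altc:eq810}, the term $\int\langle \rmD_u\calE(t_\tau,q_\tau),u_\tau'\rangle\,\ds$ vanishes by \eqref{altc:eq811} and the uniform $L^\infty((0,s_N);\calU)$-bound of $u_\tau'$, and the power term $\int\partial_t\calE(t_\tau,q_\tau)t_\tau'\,\ds$ converges by dominated convergence exactly as in the proof of Theorem \ref{thm.Mie01}. This gives \eqref{altc:eq822} with $\leq$ in place of equality; the converse inequality follows from the continuous chain rule for $\calE$ applied to $(\hat t,\hat q)\in W^{1,\infty}((0,S);\R\times(\calU\times\calV))$ together with the differential information $\rmD_u\calE(\hat t,\hat q)=0$ and the subdifferential characterisation encoded in \eqref{altc:eq821}, mimicking \cite[Lemma 5.2]{KneesRossiZanini}.

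The main obstacle I anticipate is controlling the coupling between the alternating step and the penalty: the auxiliary term $J_\tau$ appears inside the nonlinear $\calR^*_{\sigma_\tau\eta}$, so one has to show that $J_\tau$ is negligible both in the Fenchel--Young reduction above and in the limit identification of the distance functional. The scaling $\eta\delta\to 0$ is precisely tailored to this purpose, but a careful quantitative comparison exploiting $\norm{J_\tau(s)}_{\calV^*}\norm{z_\tau'(s)}_\calV=0$ almost everywhere, in combination with the local Lipschitz estimate of the conjugate $\calR^*_\mu$, will be needed to transfer the complementarity information cleanly across the short time-update subintervals $[s_{k,-1},s_{k,0}]$.
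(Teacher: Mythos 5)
Your proposal is correct and follows essentially the same route as the paper's proof: Banach--Alaoglu and Helly-type compactness for the convergences, the Fenchel--Young lower bound $\calR_\mu(v)+\calR_\mu^*(\zeta)\geq\calR(v)+\norm{v}_\bbV\dist_{\calV^*}(\zeta,\partial\calR(0))$ together with Proposition~\ref{app_prop:lsc} for the energy inequality, Lemma~\ref{app_prop:lsc2} with the $\eta\delta\to 0$ scaling for the complementarity, and the chain-rule argument from \cite[Lemma 5.2]{KneesRossiZanini} for the reverse inequality. The $J_\tau$-obstacle you anticipate is in fact milder than you fear, since $\norm{J_\tau(s)}_{\calV^*}\norm{z_\tau'(s)}_\calV=0$ a.e.\ makes $J_\tau$ drop out of the product $\norm{z_\tau'}_\bbV\dist_{\calV^*}(\cdot,\partial\calR(0))$ identically, so no Lipschitz estimate of $\calR^*_\mu$ is actually needed there.
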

 
\begin{remark}
 Observe that the solutions generated by the combined alternate minimisation 
scheme with viscous regularisation are of the same type as the solutions 
generated by the schemes discussed in Sections 
\ref{sec:locmin} and \ref{sec:solombrino} and hence belong to the class of 
BV-solutions, as well.
\end{remark}

\begin{proof}
 The proof is similar to the proof of Theorem \ref{sol:thm01} and 
\ref{thm.Mie01}, and we highlight here the differences, only. The convergences 
in \eqref{altc:eq815}--\eqref{altc:eq819} follow from the bounds provided in 
Propositions \ref{altc:prop01} and \ref{altc:proparclength}, compare also 
\eqref{altc:eq812}--\eqref{altc:eq813}.   
Moreover, for every sequence $(\sigma_n)_n\subset [0,s_N]$ with $\sigma_n\to 
\sigma$ in $[0,S]$ we have 
\begin{align}
 \label{alt:eq051}
 t_{\tau_n}(\sigma_n)\to \hat t(\sigma),\,\, 
u_{\tau_n}(\sigma_n)\rightharpoonup \hat u(\sigma) \text{ weakly in }\calU, 
\,\,
z_{\tau_n}(\sigma_n)\rightharpoonup 
\hat z(\sigma) \text{ weakly in }\calZ. 
\end{align}
This is an immediate consequence of the uniform Lipschitz bounds for the 
sequences $(t_\tau)_\tau$, $(u_\tau)_\tau$ and $(z_\tau)_\tau$ and the 
pointwise (weak) convergences in \eqref{altc:eq817}--\eqref{altc:eq819}.

Let $(\wt t_\tau,\wt u_\tau,\wt z_\tau)_\tau$ be any triple of interpolating 
curves. Thanks to the assumptions on $\calE$ and the convergences 
\eqref{altc:eq825}--\eqref{altc:eq819}, for every $s$ we have 
\begin{align}
 \rmD_u\calE(\wt t_\tau(s),\wt u_\tau(s),\wt z_\tau(s)) & \rightharpoonup 
\rmD_u\calE(\hat t(s),\hat u(s),\hat z(s))\text{ weakly in }\calU^*,
\label{altc:eq823}\\ 
 \rmD_z\calE(\wt t_\tau(s),\wt u_\tau(s),\wt z_\tau(s)) &\rightharpoonup 
\rmD_z\calE(\hat t(s),\hat u(s),\hat z(s))\text{ weakly in }\calZ^*\,.
\label{altc:eq824}
\end{align}

The relations in \eqref{altc:eq820} follow by simple lower semicontinuity 
arguments. Let us next discuss the complementarity relation \eqref{altc:eq821}: 
Observe first that 
\begin{align}
 \sup_{s\in [0,S]}\dist_{\calV^*}(-\rmD_z\calE(\hat t(s),\hat u(s), \hat 
z(s)),\partial\calR(0)) <\infty.
\label{altc:eq826}
\end{align}
Indeed, from \eqref{altc:eq808} it follows that for almost all $s\in [0,s_N]$ 
 we have 
\begin{align}
\label{altc:eq827}
 -\big( 
 \rmD_z\calE(\underline{t}_\tau(s),\overline{u}_\tau(s),\overline{z}_\tau(s)) + 
\sigma_\tau(s)\eta  z'_\tau(s) + J_\tau(s) 
 \big) \in \partial\calR(0)\,.
\end{align}
Since $\partial\calR(0)\subset \calV^*$ is bounded (due to assumption 
\eqref{eq.Mief100}) and since the functions $J_\tau$ and  $\sigma_\tau(s)\eta 
\hat z'_\tau(s)$ are uniformly bounded in $\calV^*$ 
with respect to $s,\eta,\tau$ (see  \eqref{alt:eq502} and 
\eqref{altc:eq813}--\eqref{altc:eq814}) we obtain
\[
\sup_{\tau,\eta} 
\norm{\rmD_z\calE(\underline{t}_\tau,\overline{u}_\tau,\overline{z}
_\tau)}_{L^{\infty}((0,s_N);\calV^*)}<\infty, 
\]
Thus, for this choice of the interpolants in  \eqref{altc:eq824} we actually 
have weak convergence in $\calV^*$ and  ultimately $\rmD_z \calE(\hat t,\hat 
u,\hat z)\in L^\infty((0,S);\calV^*)$. Due to the boundedness of 
$\partial\calR(0)$ in $\calV^*$ we finally obtain \eqref{altc:eq826}.
Starting again from \eqref{altc:eq827} for almost all $s$ we have 
\begin{align*}
 t'_\tau
\dist_{\calV^*}(-\rmD_z\calE(\underline{t}_\tau,\overline{u}_\tau,
\overline{z}_\tau),\partial\calR(0))
\leq t'_\tau (\sigma_\tau \eta 
\norm{z_\tau'}_{\calV^*} + 
\norm{J_\tau}_{\calV^*})=t'_\tau\norm{J_\tau}_{\calV^*},
\end{align*}
where for the latter identity we have used that $t_\tau'(s) 
\norm{z_\tau'(s)}_{\calV^*}=0$. Thus, taking into account \eqref{alt:eq103} for 
all $\alpha<\beta\in [0,s_N]$ we have \begin{align*}
 0\leq \int_\alpha^\beta t'_\tau 
\dist_{\calV^*}(-\rmD_z\calE(\underline{t}_\tau,\overline{u}_\tau,\overline{z
}_\tau),\partial\calR(0))\ds \leq \eta \delta s_N.
\end{align*}
Proposition \ref{app_prop:lsc2}  now  yields \eqref{altc:eq821}.

From \eqref{altc:eq810} and the convergences in \eqref{altc:eq823} it follows 
that $\rmD_u\calE(\hat t(s),\hat u(s),\hat z(s))=0$ for all $s\in [0,S]$ and 
moreover, $\int_\alpha^\beta \langle 
\rmD_u\calE(t_\tau(s),q_\tau(s)),u'_\tau(s)\rangle\ds$ tends to zero for $N\to 
\infty$. 
We recall that  $\calR_\mu(v) + \calR_\mu^*(\xi)\geq \calR(v) + 
\norm{v}_\bbV\dist_{\calV^*}(\xi,\partial\calR(0))$ for all $\mu>0$,  $v\in 
\calV$ and $\xi\in \calV^*$. Thus, for $\alpha=0$ and arbitrary $\beta \in 
[0,S]$ from the discrete energy dissipation identity \eqref{altc:eq807} we 
obtain in the limit $N\to\infty$ the energy dissipation inequality 
\eqref{altc:eq822} with $\leq $ instead of an equality. Here, we exploit  the 
lower semicontinuity of $\calE$ and Proposition \ref{app_prop:lsc}. With the 
same argument as in the proof of \cite[Lemma 5.2]{KneesRossiZanini} one obtains
\begin{align*}
 \langle -\rmD_z\calE(\hat t(s),\hat q(s)),\hat z'(s)\rangle \leq \calR(\hat 
z'(s)) + \dist_{\calV^*}(-\rmD_z\calE(\hat t(s),\hat q(s)),\partial\calR(0)). 
\end{align*}
Hence, applying again the chain rule to the right hand side of 
\eqref{altc:eq822} we finally obtain the energy dissipation identity 
\eqref{altc:eq822} with  equality.
This finishes the proof of Theorem \ref{thm:alt02}.
\end{proof}

\section{Examples}
\label{sec:examples}

\subsection{A finite dimensional example}

The following finite dimensional example  illustrates 
that functions satisfying 
\eqref{eq.Mief130}--\eqref{eq.Mief135} with the same data $z_0$ and $\ell$ need 
not be unique. Moreover, the approaches discussed in Section 
\ref{sec:locmin} (local minimisation) and Section \ref{sec:solombrino} 
(relaxed local minimisation) might converge  to different solutions of 
\eqref{eq.Mief130}--\eqref{eq.Mief135}.

Let $\calZ=\calV=\calX=\R$, $\kappa >0$, $z_0=2$, $\ell(t)\equiv 0$ for $t\in 
[0,T]$ and define
\[
 \calI(t,z):= -\kappa z + \tfrac{1}{4} z^4-\tfrac{8}{3}z^3 + 10 z^2-16z,\quad 
\calR(z):=\kappa\abs{z}.
\]
Observe that $\rmD_z\calI(t,z)= -\kappa + (z-2)^2(z-4)$ and 
$\partial\calR(0)=[-\kappa,\kappa]$. Clearly, 
 $-\rmD_z\calI(t,z)\in \partial\calR(0)$ if and only if $z\in \{2\}\cup 
[4,z_*]$, where $z_*>4$ is the (unique) solution of $(z-2)^2(z-4)=2\kappa$. 
The pair $(\hat t_\infty,\hat z_\infty):[0,S]\to[0,T]\times \R$ with 
$\hat t_\infty(t)=t$ and $\hat z_\infty(t)=2$ is a solution of 
\eqref{eq.Mief130}--\eqref{eq.Mief135}. Moreover, 
Let $\alpha\in [0,T]$ be arbitrary. It is straightforward to verify that the 
pairs 
$(\hat t_\alpha,\hat z_\alpha):[0, S]\to[0,T]\times\R$ with $S=2+T$ and 
\begin{align*}
 \hat t_\alpha(s)=\begin{cases}
                   s&\text{if } s\leq \alpha\\
                   \alpha&\text{if } \alpha<s\leq\alpha + 2\\ 
                   s-2&\text{if } s\geq \alpha +2
                  \end{cases}
\,,\qquad
\hat z_\alpha(s)=\begin{cases}
                  2&\text{if } s\leq \alpha\\
                   2+s-\alpha&\text{if } \alpha<s\leq\alpha + 2\\
                   4&\text{if } s\geq \alpha +2
                 \end{cases}
\end{align*}
satisfy \eqref{eq.Mief130}--\eqref{eq.Mief135}, as well. Starting with $z_0=2$ 
the algorithm \eqref{sol:eq001}--\eqref{sol:eq002} for every $\eta>1$ and 
arbitrary $\tau>0$ generates the constant values $z_{k,\infty}=2$, hence 
approximating in the limit the solution $(\hat t_\infty,\hat z_\infty)$ from 
above. On the other hand, the local minimisation algorithm 
\eqref{eq.Mief109}--\eqref{eq.Mief110} generates the points $(t_k^h,z_k^h)= (0, 
2+kh)$ if $kh\leq 2$ and $(t_k^h,z_k^h)= (kh+((k_*+1)h-2),4)$ if $k>k_*$, 
where $k_*=\lfloor 2/h\rfloor$. 
In the limit ($h\to0$) these curves converge to the solution $(\hat 
t_\alpha,\hat z_\alpha)$ with $\alpha=0$. A similar example was presented in 
\cite[Section 5.3]{Minh12}. 

\subsection{Comparison of the schemes for a finite dimensional toy example}
\label{suse:fintoy} 

In order to illustrate the similarities and also differences of the above 
discussed schemes let us consider the following finite dimensional example with 
$\calZ=\calV=\calX=\R$ and 
\begin{align}
\label{ex.001}
 \calI(t,z):=5 z^2 -\frac{t^2}{2(0.1 + z^2)},\quad 
 \calR(v):=10\abs{v}, \quad  z_0=1 \quad\text{and }T=1.5\,.
\end{align}
Note that  the energy $\calI$ is not exactly of the structure 
\eqref{eq.Mief0002}. Clearly, $\rmD_z\calI(t,z)=(10 + \frac{t^2}{(0.1 + 
z^2)^2})z$ and hence $\rmD_z\calI(t,z)$ is positive if and only if $z$ is 
positive. Hence, $z(t)>0$ implies $\dot z(t)\leq 0$. 
Moreover, $\inf\Set{\rmD^2_z\calI(t,z)}{0\leq t\leq T, 0\leq z\leq 1}
=\inf\Set{\rmD^2_z\calI(T,z)}{0\leq z\leq 1} \geq - 46.3$. 

Figure \ref{fig:glob-BV} shows the 
global energetic solution (dark red) and the BV-solution (blue) associated with 
\eqref{ex.001} on the time interval $[0,T]$.   
In this particular example these solutions are unique.  
The gray set in Figure \ref{fig:glob-BV} refers to points $(t,z)$ with 
$-\rmD_z\calI(t,z)\in \partial\calR(0)$.  
The following tests were carried out:

\textit{Vanishing viscosity:} 
Figure \ref{fig:visc} shows the results obtained with the vanishing viscosity 
approach 
\eqref{int:eq3}, where the discretisation parameters are chosen as in Table 
\ref{tab:ex1}. Observe the rather slow convergence towards the BV-solution of 
the discrete solutions  for the choice $\mu=0.1 \sqrt{\tau}$. 

%

\textit{Local minimisation:} The purple curve in Figure \ref{fig:gr-z-MieEf-90} 
is obtained by 
the local minimisation algorithm \eqref{eq.Mief109}--\eqref{eq.Mief110} with 
$h=T/90=0.01\overline{6}$. The total number of minimisation steps to reach the 
final time $T$ is  $150$. Figure \ref{fig:gr-tincr-MieEf-90} shows the 
corresponding time increments. 

\textit{Relaxed local minimisation with stopping criterion:} Figure 
\ref{fig:gr-z-relaxlocmin-eta100deltastop10-3} 
shows the discrete solution obtained with the scheme \eqref{sol:eq001} combined 
with the stopping criterion \eqref{alt:eq103} for $N=100$, $\eta=100$ and 
 $\delta=10^{-3}$. The total number of minimisation steps is $400$. Figure 
\ref{fig:gr-loc-iterations-relaxlocmin-eta100-deltastop10-3} displays the 
number of iterations in each time step $t_k=kT/N$ (with a 
maximum of $127$ minimisation steps for $k=86$). Observe that for this choice 
of $\eta$ for all $t\in [0,T]$  the function $z\mapsto \calI(t,z) + 
\calR(z-v)+  
\frac{\eta}{2}\abs{z-v}^2$ is uniformly convex on $[0,1]$ (for arbitrary $v$).

 \textit{Visco-energetic solutions:} Visco-energetic solutions are obtained as 
 limit $\tau\to 0$ in \eqref{int:eq3} for fixed ratio $\mu/\tau$, \cite{RS17}. 
Figure \ref{fig:gr-z-glob-rossiviscoenergetic-10050015003000vis05tau} shows the 
convergence for $\mu/\tau=0.5$, while Figure 
\ref{fig:gr-z-rossiviscoenergetic-10050015003000vis10tau} shows the convergence 
for $\mu/\tau=10$. In both cases $\tau=T/N$ with $N\in \{100,500,1500,3000\}$.

\begin{figure}[h]
\begin{subfigure}{.5\textwidth}
  \centering
  \setlength{\unitlength}{1cm}
  \begin{picture}(5,5)
  \put(0.2,0.2){\includegraphics[width=4.6cm]{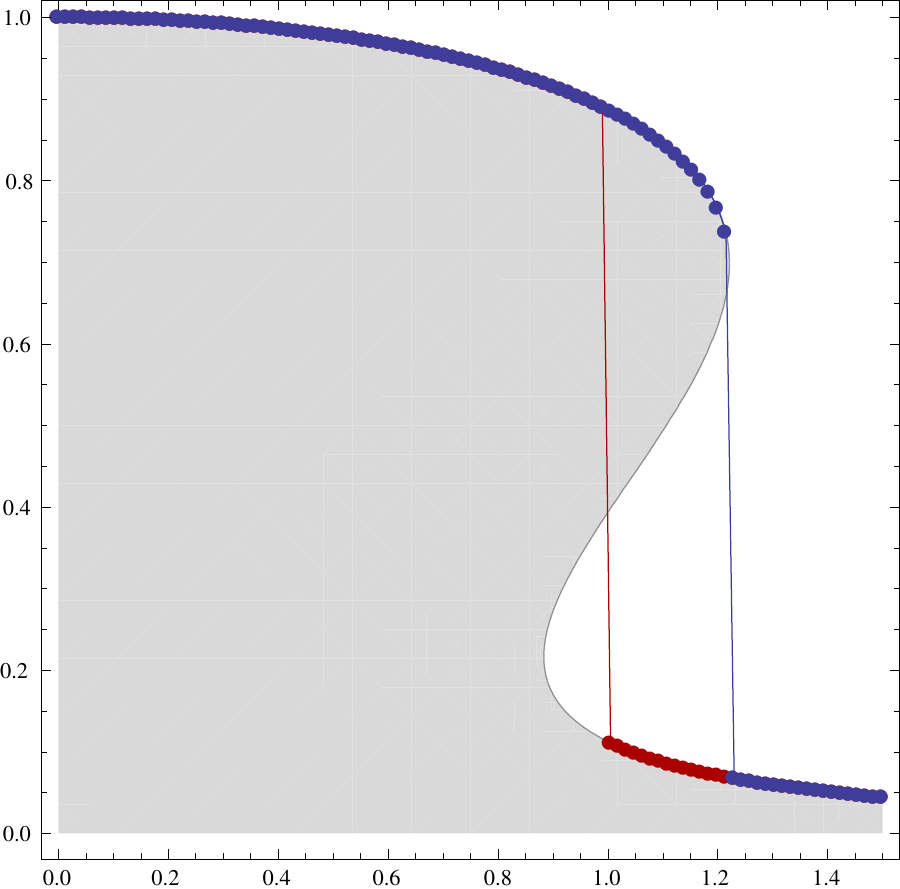}}
  \put(0,4.6){$z$}
  \put(4.6,0){$t$} 
  \end{picture}
  \caption{$ $}
  \label{fig:glob-BV}
\end{subfigure}%
\begin{subfigure}{.5\textwidth}
  \centering
   \setlength{\unitlength}{1cm}
  \begin{picture}(5,5)
\put(0.2,0.2){\includegraphics[width=4.6cm]{%
gr-z-visc100sqrt1-visc100sqrt01-visc500-visc1000}}
\put(0,4.6){$z$}
  \put(4.6,0){$t$} 
  \end{picture}
  \caption{$ $}
  \label{fig:visc}
\end{subfigure}
\caption{Left: Global energetic (dark red) and BV-solution (blue); Right: 
Solutions generated by the viscosity scheme \eqref{int:eq3} with 
parameters from Table \ref{tab:ex1}.}
\end{figure}

\begin{figure}
\begin{subfigure}{.5\textwidth}
  \centering
    \setlength{\unitlength}{1cm}
  \begin{picture}(5,5)
  \put(0.2,0.2){\includegraphics[width=4.6cm]{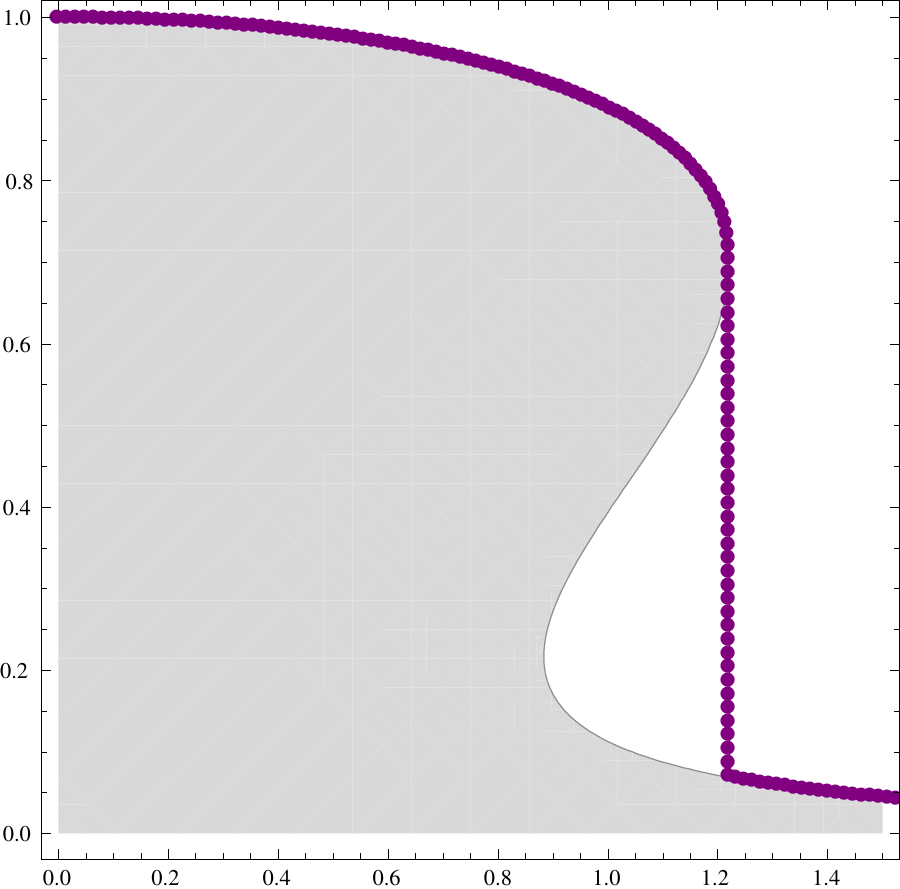}}
  \put(0,4.6){$z$}
  \put(4.6,0){$t$} 
  \end{picture}
  \caption{$ $}
  \label{fig:gr-z-MieEf-90}
\end{subfigure}%
\begin{subfigure}{.5\textwidth}
  \centering
  \setlength{\unitlength}{1cm}
  \begin{picture}(5,5)
  \put(0.2,0.2){%
\includegraphics[width=4.6cm]{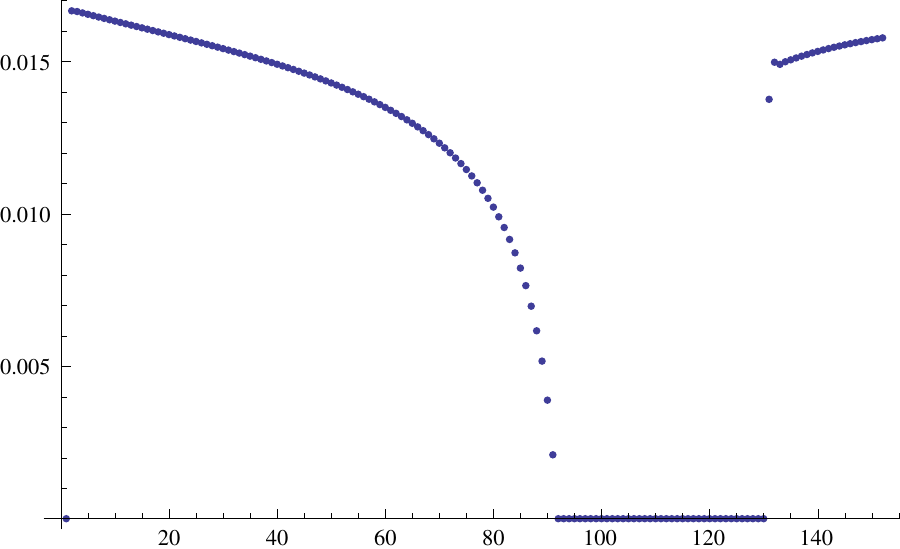}}
 \put(0,3.2){$t_k - t_{k-1}$}
  \put(4.6,0){$k$} 
  \end{picture}
  \caption{$ $}
  \label{fig:gr-tincr-MieEf-90}
\end{subfigure}
\caption{Left: Solution generated with 
the local minimisation scheme \eqref{eq.Mief109}--\eqref{eq.Mief110} for 
$h=T/90$; Right: Time-increment in each minimisation step.}
\end{figure}

\begin{figure}
\begin{subfigure}{.5\textwidth}
  \centering
  \setlength{\unitlength}{1cm}
  \begin{picture}(5,5)
  \put(0.2,0.2){%
\includegraphics[width=4.6cm]{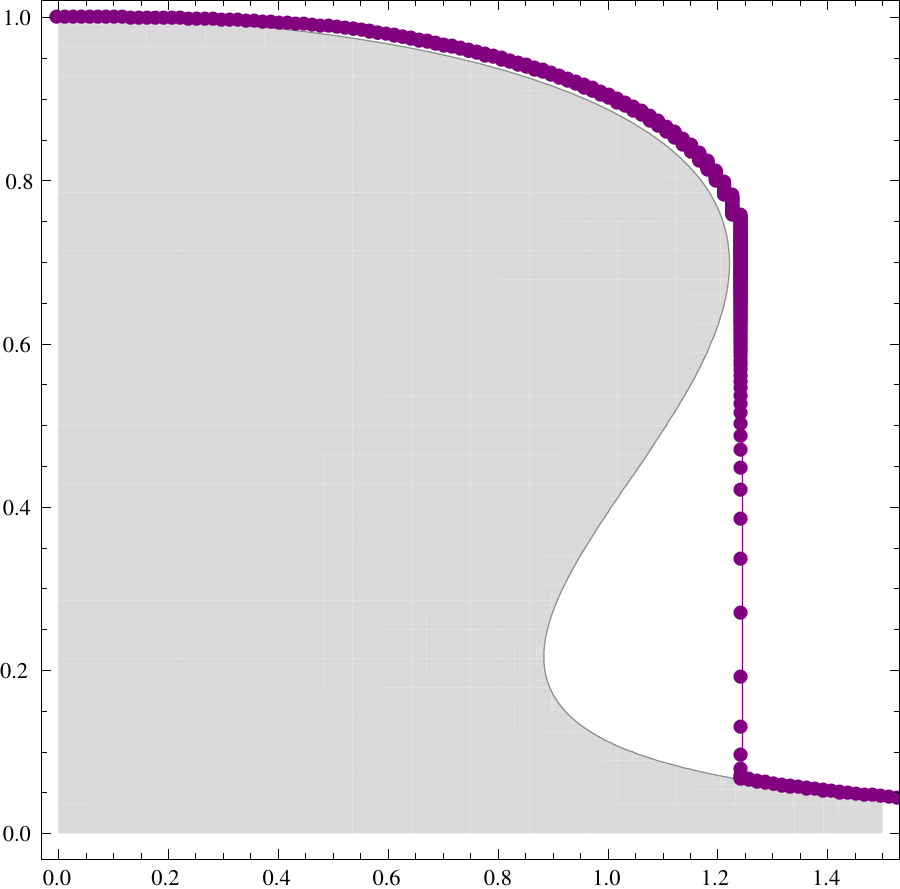}}
 \put(0,4.6){$z$}
  \put(4.6,0){$t$} 
  \end{picture}
  \caption{$ $}
  \label{fig:gr-z-relaxlocmin-eta100deltastop10-3}
\end{subfigure}%
\begin{subfigure}{.5\textwidth}
  \centering
   \setlength{\unitlength}{1cm}
  \begin{picture}(5,5)
  \put(0.2,0.2){%
\includegraphics[width=4.6cm]{%
gr-loc-iterations-relaxlocmin-eta100-deltastop10-3}}
\put(0,3.4){$i_\text{max}$}
  \put(4.6,0){$k$} 
  \end{picture}
  \caption{$ $}
\label{fig:gr-loc-iterations-relaxlocmin-eta100-deltastop10-3}
\end{subfigure}
\caption{Left: Solution generated with the relaxed local minimisation scheme 
\eqref{sol:eq001} combined 
with  \eqref{alt:eq103} for $N=100$, $\eta=100$,
 $\delta=10^{-3}$; Right: Number of iterations $i_\text{max}$ in each time 
step.}
\end{figure}

\begin{figure}
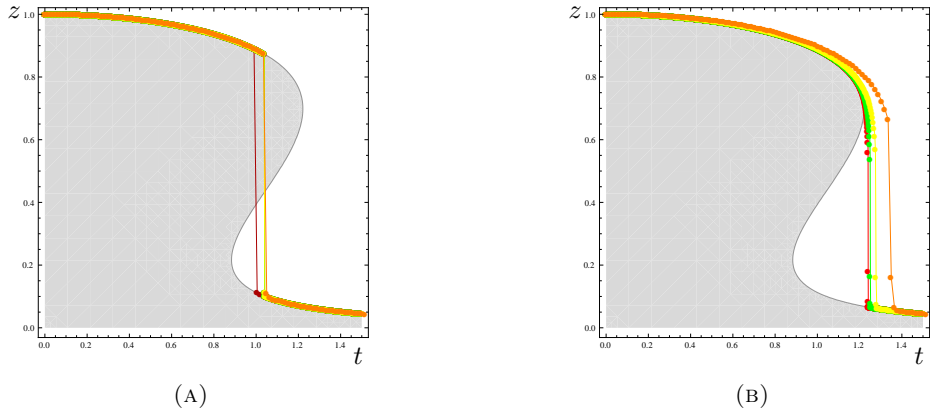

\begin{subfigure}{.5\textwidth}
  \centering
  \setlength{\unitlength}{1cm}
  \begin{picture}(5,5)
  \put(0.2,0.2){%
\includegraphics[width=4.6cm]{%
gr-z-glob-rossiviscoenergetic-10050015003000vis05tau}}
\put(0,4.6){$z$}
  \put(4.6,0){$t$} 
  \end{picture}
  \caption{$ $}
   \label{fig:gr-z-glob-rossiviscoenergetic-10050015003000vis05tau}
\end{subfigure}%
\begin{subfigure}{.5\textwidth}
  \centering
   \setlength{\unitlength}{1cm}
   \begin{picture}(5,5)
  \put(0.2,0.2){%
\includegraphics[width=4.6cm]{%
gr-z-rossiviscoenergetic-10050015003000vis10tau}}
\put(0,4.6){$z$}
  \put(4.6,0){$t$} 
  \end{picture}
  \caption{$ $}
\label{fig:gr-z-rossiviscoenergetic-10050015003000vis10tau}
\end{subfigure}
\caption{Left: Visco-energetic solutions for $\mu/\tau=0.5$ and different 
values for $\tau$ (for comparison the global energetic solution is plotted in 
dark red); Right: Visco-energetic solutions for $\mu/\tau=10$.}
\end{figure}

\begin{table}
 \begin{tabular}{lll}
 &$N$ &$\mu$\\
  \hline
  blue&$100$&$\mu=\sqrt{\tau}$\\
  yellow&$100$&$\mu=0.1\sqrt{\tau}$\\
  orange&$500$&$\mu=0.1\sqrt{\tau}$\\
  red&$1000$&$\mu=0.1\sqrt{\tau}$
 \end{tabular}
 \caption{Discretisation parameters for the 
    scheme \eqref{int:eq3} with $\tau=T/N$.}
    \label{tab:ex1}
\end{table}

\newpage 
\subsection{Application to a rate-independent ferroelectric model}
\label{suse:ferro}
Ferroelectric ceramics exhibit coupled electrical and mechanical responses: 
mechanical deformations of such a material induce an electric field and 
 vice versa. Furthermore they show a hysteretic behavior since the polarisation 
and the spontaneous eigenstrains might change provided the applied electrical 
or mechanical loads are large enough. The model discussed in this section is a 
 rate-independent version of the phase field model from 
\cite{SKTSSMG15}. 
General rate-independent models for ferroelectric material behavior  were 
analysed  in \cite{MiTi06} in the global energetic 
framework. 

Let $\Omega\subset\R^d$, $d\in \{2,3\}$,  be a bounded domain with Lipschitz 
boundary. The following variables are used for the modeling:
The displacement field $u:\Omega\to\R^d$, the strain field $e(u)=\sym(\nabla 
u)$, the electric potential $\phi:\Omega\to\R$, the electric field $E=-\nabla 
\phi$, the electric displacement $D:\Omega\to\R^d$ and the spontaneous 
polarisation $P:\Omega\to \R^d$. In \cite{SKTSSMG15}, the  free energy density  
associated with this system is given as
\begin{align}
 \Psi(e(u),D,P,\nabla P):= \Psi_\text{bulk}(e(u),D,P) +  \Psi_\text{sep}(P) + 
\Psi_\text{grad}(\nabla P),
\end{align}
where
\begin{align}
 \Psi_\text{bulk}(e(u),D,P) &= \frac{1}{2}\langle 
 \begin{pmatrix}
  \bbC+ e^T\epsilon^{-1}e & -e^T\epsilon^{-1}\\
  -\epsilon^{-T}e & \epsilon^{-1}
 \end{pmatrix}
\begin{pmatrix}
 e(u) -\varepsilon^0\\
 D-P
\end{pmatrix}
,\begin{pmatrix}
  e(u) -\varepsilon^0\\
 D-P
 \end{pmatrix}\rangle \,,
\\
\Psi_\text{grad}(\nabla P)&=\frac{\kappa}{2}\abs{\nabla P}^2\,.
\end{align}
In \cite{SKTSSMG15},  $\Psi_\text{sep}$ is a nonconvex sixth order polynomial 
in $P$ that is bounded from below. In the case $d=2$ this polynomial fits to 
our assumptions. However, 
in the three dimensional case, we will formulate more restrictive assumptions 
on $\Psi_\text{sep}$, see \eqref{ex:ferro4} here below.
In general, 
the material parameters $\bbC$ (elasticity tensor), $e$ (piezoelectric tensor), 
$\epsilon$ (dielectric tensor) and the eigenstrain $\varepsilon^0$ depend on 
the polarisation $P$ and explicit expressions can again be found in 
\cite{SKTSSMG15}. 
However, in order to apply the results from the previous sections directly we 
here make the simplifying assumption that these quantities do not depend on 
$P$. 
It is the topic of  a forthcoming paper to include also $P$-dependent 
coefficients. To be more precise, we assume that 
\begin{gather*}
 \bbC\in 
L^\infty(\Omega;\Lin(\R^{d\times d}_\text{sym},\R^{d\times d}_\text{sym})), \, 
e\in L^\infty(\Omega;\Lin(\R^{d\times d},\R^d)),\, 
\\
\epsilon\in L^\infty(\Omega;\Lin(\R^d,\R^d)),\,
\varepsilon^0\in L^2(\Omega;\R^{d\times d}_\text{sym}),\, 
\kappa\in L^\infty(\Omega;\R)
\end{gather*}
with $\kappa(x)\geq \kappa_0>0$ almost everywhere in $\Omega$. 
Moreover, the tensor fields $\bbC$ and $\epsilon$ shall be uniformly positive 
definite, i.e.\ there exists a constant $\alpha>0$ such that for 
almost all $x\in \Omega$ we have
\begin{align*}
 \langle \bbC(x) \xi,\xi\rangle\geq \alpha\abs{\xi}^2 \text{for all 
}\xi\in \R^{d\times d}_\text{sym},
\qquad(\epsilon(x) v) \cdot v\geq   \alpha \abs{v}^2 \text{for all }v\in \R^d.
\end{align*}
Finally,  we assume that $\Psi_\text{sep}\in C^2(\R^d,\R)$  has  the 
following coercivity and growth properties
\begin{subequations}
\begin{align}
\exists \delta>0,c_0\in \R\,\forall P\in \R^d:&\qquad \Psi_\text{sep}(P)\geq 
\delta\abs{P}^2-c_0, 
\label{ex:ferro4a}
\\
\exists c_1>0 \, \forall P\in \R^d:&\qquad \abs{\rmD_P^2\Psi_\text{sep}(P)}\leq 
c_1(1+\abs{P}^{r-2}) 
\quad \text{ with }r\in \begin{cases}
                 [1,\infty)&\text{if }d=2\\
                 [1,4]&\text{if }d=3
                   \end{cases}\,.
\label{ex:ferro4}
\end{align}
\end{subequations}
For simplicity we assume vanishing Dirichlet boundary conditions on 
$\partial\Omega$  for $u$ and 
$\phi$. This leads to the following choice for the function spaces:  
\begin{align*}
 \calU:=H_0^1(\Omega;\R^d)\times L^2_D(\Omega,\R^d), \quad  
\calZ:=H^1(\Omega,\R^d), \quad \calV:=L^2(\Omega,\R^d), \quad 
\calX=L^1(\Omega;\R^d)
\end{align*}
where $L^2_D(\Omega,\R^d):=\Set{D\in L^2(\Omega,\R^d)}{\forall \phi\in 
H_0^1(\Omega,\R)\,\, \int_\Omega D\cdot\nabla \phi\dx=0}$, equipped with the 
$L^2$-norm.
For $(u,D)\in \calU$, $P\in \calZ$ and $\ell\in 
C^1([0,T],(\calU^*\times\calV^*))$  
the energy functional 
$\calE:[0,T]\times \calU\times\calZ\to\R$ takes the form 
\begin{align*}
 \calE(t,u,D,P):=\int_\Omega \Psi_\text{bulk}(e(u(x)),D(x),P(x)) + 
\Psi_\text{sep}(P(x)) + \Psi_\text{grad}(\nabla P(x)) \dx 
-\langle \ell(t), (u,D,P)^T\rangle,
\end{align*}
while the dissipation potential $\calR:\calX\to[0,\infty)$  is given by
\begin{align*}
 \calR(v)=\gamma\norm{v}_{L^1(\Omega)},
\end{align*}
with a constant $\gamma>0$. 
The ferroelectric model  reads: Find $(u,D):[0,T]\to \calU$ and 
$P:[0,T]\to\calZ$ with $P(0)=P_0\in \calZ$ and 
\begin{align*}
 0&=\rmD_u\calE(t,u(t),D(t),P(t)),\quad 
 0=\rmD_D\calE(t,u(t),D(t),P(t)),\\
 0&\in \partial\calR(\dot P(t)) + \rmD_P\calE(t,u(t),D(t),P(t))\,.
\end{align*}
Clearly, the assumptions \eqref{sol:assumptionsa} 
and \eqref{sol:assumptionsc} are satisfied. Moreover, for all $(u,D)\in \calU$, 
$P\in \calZ$, the  quadratic part of $\calE$ satisfies 
\begin{align}
\int_\Omega 
\Psi_\text{bulk}(e(u)),D,P)  + \Psi_\text{grad}(\nabla P) \dx +\delta 
\norm{P}_{L^2(\Omega)}^2\geq \beta (\norm{(u,D)}^2_{\calU} + \norm{P}_\calZ^2 
-\norm{\varepsilon^0}_{L^2(\Omega)}^2)
\end{align}
with $\delta>0$ from \eqref{ex:ferro4a} and $\beta >0$ is a constant that is 
independent of $(u,D,P)$.  This estimate follows from the positivity 
assumption on the material tensors, Korn's inequality and after applying 
Young's inequality several times. This implies \eqref{ass:alt001} (to be more 
precise, one should interpret terms involving $\varepsilon^0$ as parts of the 
loads  $\ell$).  Let $\calF(P):=\int_\Omega \Psi_\text{sep}(P) - 
\delta\abs{P}^2\dx$. Thanks to \eqref{ex:ferro4a}--\eqref{ex:ferro4} and the 
embedding theorems for Sobolev spaces, $\calF$ satisfies \eqref{eq.Mief00}, 
\eqref{ass.F01}, \eqref{ass.fweakconv} and \eqref{sol:assumptionsbb}. Hence, 
 discrete solutions of 
this ferroelectric model generated by any of the schemes presented in the 
previous sections  converge to  solutions of BV-type. In particular, the 
alternate minimisation scheme discussed in Section \ref{suse:altcomplvisc} can 
be applied to approximate BV-solutions of the 
ferroelectric model.


\subsubsection*{Acknowledgment}
This research has been partially funded by Deutsche Forschungsgemeinschaft 
(DFG) through the Priority Program SPP 1962 Non-smooth and 
Complementarity-based Distributed Parameter Systems: Simulation and 
Hierarchical Optimization, Project P09 Optimal Control of Dissipative Solids: 
Viscosity Limits and Non-Smooth Algorithms.

\begin{appendix}
 \section{Identities relying on convex analysis}
With the assumptions and definitions introduced in Section 
\ref{suse:infiniteloc} 
for $h>0$, $v\in \calV$ we define 
\begin{align}
\label{app:e02} 
 \Psi_h(v):= \calR(v) + I_h(v),
\end{align}
where $I_h(v)= 0  $ if $\langle \bbV v,v\rangle\leq h^2$ and $I_h(v)=\infty$ 
otherwise. We denote by $\partial^\calZ\Psi_h$ and $\Psi_h^{\ast_\calZ}$ the 
subdifferential and the conjugate functional of $\Psi_h$ with respect to the 
$\calZ-\calZ^*$-duality and by $\partial\Psi_h$ and 
$\Psi_h^{\ast}$ the subdifferential and the conjugate functional with respect 
to the $\calV-\calV^*$-duality.
\begin{lemma}
\label{app:lem1} 
Assume \eqref{eq.Mief000}, \eqref{eq.Mief0001} and \eqref{eq.Mief100}. 
 For every $z\in \calZ$,  $\eta\in \calV^*$ we have 
 \begin{gather} 
\partial^\calZ\Psi_h(z)\subset \calV^*,\quad 
\partial^\calZ\Psi_h(z)=\partial\Psi_h(z)\,,
\label{app.eqa2}\\
\Psi_h^{\ast_\calZ}(\eta)=\Psi_h^*(\eta)= h \Psi_1^*(\eta) = 
h\dist_{\calV^*}\big(\eta,\partial\calR(0)\big)\,, 
\label{app:e04}
\end{gather}
where $\dist_{\calV^*}(\eta,\partial\calR(0)\big) =\inf\Set{\norm{\eta - 
\sigma}_{\bbV^{-1}}}{\sigma\in \partial\calR(0)}$ and 
$\norm{\eta}_{\bbV^{-1}}^2=\langle \bbV^{-1}\eta,\eta\rangle$.  Furthermore, 
for $h>0$ $\partial\Psi_h(0)=\partial\calR(0)$ and $\partial\calR(0)$  is 
bounded in $\calV^*$. 
Moreover, for $v\in \calV$, $\xi\in \calV^*$ we have 
\begin{gather}
 \xi\in \partial I_h(v) \,\,\,
 \Leftrightarrow \,\,\, 
 \norm{v}_\bbV\leq h \text{ and }\exists \mu\geq 0  \text{ with } 
 \mu(\norm{v}_\bbV -h)=0 \text{ and } 
 \xi =\mu \bbV v\,.
\end{gather}
\end{lemma}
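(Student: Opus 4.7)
The key structural observation is that $\calR$, while a priori defined on $\calX$, automatically extends to a finite, continuous, sublinear function on $\calV$ and on $\calZ$, since by \eqref{eq.Mief000}--\eqref{eq.Mief100} we have $\calR(v)\leq C\|v\|_\calX\leq C'\|v\|_\calV\leq C''\|v\|_\calZ$; moreover $\calR$ is $C'$-Lipschitz on $\calV$ (a standard consequence of convexity plus positive $1$-homogeneity plus the upper bound). I will treat the five assertions in a natural order so that each rests on the previous.

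First I would establish boundedness of $\partial\calR(0)$ in $\calV^*$: for any $\xi$ in this set and every $v\in\calV$, one has $\langle\xi,v\rangle\leq\calR(v)\leq C'\|v\|_\calV$, hence $\|\xi\|_{\calV^*}\leq C'$; this also shows $\partial\calR(0)$ is $\calV^*$-weakly$*$-closed, so it is the closed convex set whose support functional (in the $\calV$-duality) is exactly $\calR$, i.e.\ $\calR^* = I_{\partial\calR(0)}$. Next I would compute $\partial I_h$ by identifying the normal cone to the $\|\cdot\|_\bbV$-ball $\{v\in\calV:\|v\|_\bbV\leq h\}$ at a point $v$: interior points give $\{0\}$, boundary points give the outward ray $\{\mu\bbV v:\mu\geq 0\}$; this yields the quoted complementarity characterisation. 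Because $\calR$ is finite and continuous on all of $\calV$, the Moreau--Rockafellar sum rule applies on $\calV$ and gives $\partial\Psi_h(z)=\partial\calR(z)+\partial I_h(z)\subset\calV^*$; specialising to $z=0$ (interior of the ball for $h>0$) yields $\partial\Psi_h(0)=\partial\calR(0)$.

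For the conjugate formula I would argue directly: since $\Psi_h(v)=+\infty$ for $\|v\|_\bbV>h$,
\[
\Psi_h^*(\eta)=\sup_{\|v\|_\bbV\leq h}\bigl(\langle\eta,v\rangle-\calR(v)\bigr).
\]
The scaling $v\mapsto hv$ immediately gives $\Psi_h^*(\eta)=h\,\Psi_1^*(\eta)$. For the upper bound, using $\calR(v)\geq\langle\sigma,v\rangle$ for every $\sigma\in\partial\calR(0)$, one estimates $\langle\eta,v\rangle-\calR(v)\leq\langle\eta-\sigma,v\rangle\leq h\|\eta-\sigma\|_{\bbV^{-1}}$ and minimises in $\sigma$. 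For equality I would exhibit an explicit maximiser: let $\sigma^\ast$ be the Hilbert-space projection (in the norm $\|\cdot\|_{\bbV^{-1}}$) of $\eta$ onto $\partial\calR(0)$, and set $v^\ast:=h\,\bbV^{-1}(\eta-\sigma^\ast)/\|\eta-\sigma^\ast\|_{\bbV^{-1}}$. The projection inequality translates into $\langle\sigma-\sigma^\ast,v^\ast\rangle\leq 0$ for all $\sigma\in\partial\calR(0)$, which forces $\calR(v^\ast)=\langle\sigma^\ast,v^\ast\rangle$, and then a short computation gives $\langle\eta,v^\ast\rangle-\calR(v^\ast)=h\|\eta-\sigma^\ast\|_{\bbV^{-1}}=h\dist_{\calV^*}(\eta,\partial\calR(0))$.

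It remains to pass from the $\calV,\calV^*$ duality to the $\calZ,\calZ^*$ duality. For the conjugates, I would use the density of $\calZ$ in $\calV$ together with a truncation/rescaling: any $v\in\calV$ with $\|v\|_\bbV\leq h$ is approximated by $\tilde z_n\in\calZ$ with $\|\tilde z_n\|_\bbV\leq h$ and $\tilde z_n\to v$ in $\calV$, and $\calR$ is continuous on $\calV$, so the sup defining $\Psi_h^{\ast_\calZ}$ equals the one defining $\Psi_h^*$ for any $\eta\in\calV^*$. For the subdifferential identification the inclusion $\partial\Psi_h(z)\subset\partial^\calZ\Psi_h(z)$ is trivial via restriction; the converse, together with $\partial^\calZ\Psi_h(z)\subset\calV^*$, is the only genuinely delicate point and is the main obstacle. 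I would apply Moreau--Rockafellar on $\calZ$ (valid since $\calR$ is continuous on $\calZ$) to write $\partial^\calZ\Psi_h(z)=\partial^\calZ\calR(z)+\partial^\calZ I_h(z)$; any $\xi\in\partial^\calZ\calR(z)$ satisfies $\langle\xi,w\rangle\leq\calR(z+w)-\calR(z)\leq C'\|w\|_\calV$ for every $w\in\calZ$, so by density it extends uniquely to an element of $\calV^*$ lying in $\partial\calR(z)$. A parallel argument using one-sided directional perturbations inside the $\bbV$-ball (choosing directions $w$ with $\langle\bbV z,w\rangle<0$ when $\|z\|_\bbV=h$, and arbitrary small $w$ otherwise) handles $\partial^\calZ I_h(z)$ and identifies it with the normal-cone expression in $\calV^*$. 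Combining the pieces yields $\partial^\calZ\Psi_h(z)=\partial\Psi_h(z)\subset\calV^*$, completing the proof.
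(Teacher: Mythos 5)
Your proof is correct and shares the paper's skeleton: the Moreau--Rockafellar sum rule to split $\Psi_h$ into $\calR+I_h$, one-homogeneity to reduce $\partial\calR(z)$ to $\partial\calR(0)$, the upper bound $\calR\leq C\norm{\cdot}_\calX$ to force $\calV^*$-membership, and density of $\calZ$ in $\calV$ to pass between dualities. The genuine difference is in how the conjugate formula \eqref{app:e04} is obtained: you compute $\Psi_h^*(\eta)=\sup_{\norm{v}_\bbV\leq h}(\langle\eta,v\rangle-\calR(v))$ directly, get $h\Psi_1^*$ by scaling, and then exhibit an explicit maximiser $v^*$ built from the $\bbV^{-1}$-orthogonal projection of $\eta$ onto $\partial\calR(0)$, using the variational inequality of the projection to identify $\calR(v^*)=\langle\sigma^*,v^*\rangle$. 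The paper instead invokes the inf-convolution formula $\Psi_h^* = \calR^*\,\square\, I_h^*$ together with $\calR^*=I_{\partial\calR(0)}$ and $I_h^*(\cdot)=h\norm{\cdot}_{\bbV^{-1}}$, which is more compact but less explicit. Your route has the advantage of being self-contained and constructive; the paper's is shorter and more automatic. One place where the paper's argument is cleaner and your sketch is a bit hand-wavy is the inclusion $\partial^\calZ I_h(z)\subset\calV^*$ at boundary points $\norm{z}_\bbV=h$: the paper simply computes $I_h^{*_\calZ}$ (showing its domain lies in $\calV^*$) and then appeals to the generalized Young inequality, whereas your ``one-sided directional perturbation'' argument would need more care to turn into a complete proof; you might consider adopting the paper's conjugate-domain argument for that step.
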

\begin{proof}
In order to verify \eqref{app.eqa2} 
observe first that by the sum rule for subdifferentials, \cite{IoTi79}, for all 
$z\in \calZ$ we have 
 $\partial^\calZ\Psi_h(z)= \partial^\calZ\calR(z) + \partial^\calZ I_h(z)$, 
and we discuss the terms on the right hand side separately. 
Since $\calR$ is positively homogeneous of degree one we have 
$\partial^\calZ\calR(z)\subset\partial^\calZ\calR(0)$ for all $z\in \calZ$.  
The upper bound \eqref{eq.Mief100} implies the estimate 
$
 \langle\eta,z\rangle \leq \calR(z)\leq C\norm{z}_\calX\leq \wt 
C\norm{z}_\calV$ that is valid for all $\eta\in \partial^\calZ\calR(0)$ and 
$z\in \calZ$.  
Since $\calZ$ is dense in $\calV$ this estimate shows that $\eta$ can be 
extended in a unique way to an element from $\calV^*$ and thus 
$\partial^\calZ\calR(0)\subset\calV^*$. 
Observe that 
\begin{align}
\label{app.eqa5}
 I_h^{*_\calZ}(\xi)=\begin{cases}
                     I_h^*(\xi)&\text{if }\xi\in \calV^*\\
                     \infty&\text{if }v\in \calZ^*\backslash\calV^*
                    \end{cases}\,
\end{align}
with $I_h^*(\xi)= h\sqrt{\langle \xi,\bbV^{-1}\xi\rangle}$ for $\xi\in 
\calV^*$. This can be seen as follows: The expression for $I_h^*$ (conjugate 
functional of $I_h$ with respect to $\calV-\calV^*$) follows by direct 
calculations. In order to determine  
$I_h^{*_\calZ}(\xi)$ let first $\xi\in 
\calZ^*$ with $I_h^{*_\calZ}(\xi)=\sup\Set{\langle \xi,z\rangle }{z\in 
\calZ,\,\langle \bbV z,z\rangle_{\calV^*,\calV}\leq h^2}=:c<\infty$. 
Then for all $z\in B_\calZ:=\Set{z\in \calZ}{\langle \bbV 
z,z\rangle_{\calV^*,\calV}\leq h^2}$ we 
have $\abs{\langle\xi,z\rangle}\leq c$ which due to the density of $B_\calZ$ in 
$B_\calV:=\Set{v\in 
\calV}{\langle \bbV v,v\rangle_{\calV^*,\calV}\leq h^2}$  implies that $\xi\in 
\calV^*$ and 
$I_h^{*}(\xi)=I_h^{*_\calZ}(\xi)$.
 With the same argument we obtain that 
 $I_h^{*_\calZ}(\xi)=I_h^{*}(\xi)$ for arbitrary $\xi\in \calV^*$  
and \eqref{app.eqa5} is proved.  Since 
$\text{dom\,}(I_h^{*_\calZ})\subset\calV^*$, from the generalized Young 
inequality we conclude that $\partial^\calZ I_h(z)\subset\calV^*$ for all 
$z \in \calZ$. This proves the first claim in \eqref{app.eqa2}. 
The second 
claim in \eqref{app.eqa2} now is an immediate consequence. 
In a similar way the first identity in \eqref{app:e04} follows. 
The last identity in \eqref{app:e04} is a consequence of the inf-convolution 
formula and  general properties of one-homogeneous functionals, cf.\ 
\cite{IoTi79}. 
\end{proof}

\section{Lower semicontinuity properties}

The following Proposition is a slight variant of \cite[Lemma 3.1]{MRS09}. 
\begin{proposition}
 \label{app_prop:lsc} 
Let $v_n,v\in L^\infty(0,S;\calV)$ with $v_n\overset{*}{\rightharpoonup}{v}$ in 
$L^\infty(0,S;\calV)$ and  $\delta_n,\delta\in L^1(0,S;[0,\infty))$ with 
$\liminf_{n\to\infty} \delta_n(s)\geq \delta(s)$ for almost all $s$. 
Then 
\begin{align}
 \label{app:eq20}
 \liminf_{n\to\infty}\int_0^S\norm{v_n(s)}_\calV \delta_n(s)\ds \geq \int_0^S 
\norm{v(s)}_\calV\delta(s)\ds.
\end{align}
\end{proposition}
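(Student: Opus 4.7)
The plan is to decouple the varying weights from the norms: first reduce to a fixed weight $\delta$, then exploit weak-$*$ lower semicontinuity of the integral norm functional. Without loss of generality I would pass to a (not relabeled) subsequence realising the liminf on the left of \eqref{app:eq20} as a limit. The weak-$*$ convergence in $L^\infty(0,S;\calV)$ combined with Banach--Steinhaus yields a uniform bound $M := \sup_n \norm{v_n}_{L^\infty(0,S;\calV)} < \infty$, which will be the only quantitative input about $v_n$ needed for the reduction step.

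\emph{Reduction to a fixed weight.} I would introduce $\delta_n^-(s) := \min\{\delta_n(s),\delta(s)\}$. From $\liminf_n \delta_n(s) \geq \delta(s)$ and $0 \leq \delta_n^-(s) \leq \delta(s)$ it follows that $\delta_n^-(s) \to \delta(s)$ pointwise a.e., and dominated convergence (with majorant $\delta \in L^1(0,S)$) gives $\delta_n^- \to \delta$ strongly in $L^1(0,S)$. Since $\delta_n \geq \delta_n^-$ and $\norm{v_n(s)}_\calV \geq 0$, I would estimate
\begin{align*}
\int_0^S \norm{v_n(s)}_\calV \delta_n(s)\,\dd s
&\geq \int_0^S \norm{v_n(s)}_\calV \delta_n^-(s)\,\dd s \\
&\geq \int_0^S \norm{v_n(s)}_\calV \delta(s)\,\dd s - M\,\norm{\delta_n^- - \delta}_{L^1(0,S)},
\end{align*}
and the error term vanishes as $n\to\infty$. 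Hence it suffices to prove $\liminf_n G(v_n) \geq G(v)$, where $G(w) := \int_0^S \norm{w(s)}_\calV \delta(s)\,\dd s$.

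\emph{Weak-$*$ lower semicontinuity of $G$.} This is the heart of the argument. Exploiting that $\calV$ is a Hilbert space, I would use the Riesz isomorphism to define, for each $w \in L^\infty(0,S;\calV)$, the selection $\varphi_w(s) := \delta(s)\, w(s)/\norm{w(s)}_\calV$ on $\{w \neq 0\}$ and $\varphi_w(s) := 0$ elsewhere. Then $\varphi_w \in L^1(0,S;\calV^*)$ with $\norm{\varphi_w(s)}_{\calV^*} \leq \delta(s)$, and the pointwise identity $\langle \varphi_w(s), w(s)\rangle = \delta(s)\norm{w(s)}_\calV$ yields the dual representation
\begin{align*}
G(w) = \sup\Set{\int_0^S \langle \varphi(s), w(s)\rangle\,\dd s}{\varphi \in L^1(0,S;\calV^*),\, \norm{\varphi(s)}_{\calV^*} \leq \delta(s) \text{ for a.a.\ } s}.
\end{align*}
Each admissible $\varphi$ induces a weak-$*$ continuous linear functional on $L^\infty(0,S;\calV)$, so $G$ is a supremum of such functionals, hence weak-$*$ lower semicontinuous. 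Applied to $v_n \overset{*}{\rightharpoonup} v$ this gives $\liminf_n G(v_n) \geq G(v)$ and closes the argument.

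The main obstacle is the dual representation of $G$, since a naïve pointwise supremum over unit vectors does not obviously commute with integration and weak-$*$ limits. In the Hilbert setting of Section \ref{suse:infiniteloc} the Riesz map makes the needed ``dual direction field'' $\varphi_w$ explicit and measurable, so no delicate measurable selection theorem is required; separability of $\calV$ further ensures Bochner measurability of all functions involved.
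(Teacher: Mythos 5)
Your proof is correct, and it takes a genuinely different route from the paper's. The paper reduces to the case $\delta_n\to\delta$ strongly in $L^1$ (via the two-parameter truncation $\delta_{n,k}=\min\{\delta_n,\delta,k\}$ and a final limit $k\to\infty$) and then invokes a generalized Ioffe-type theorem of Valadier to handle joint lower semicontinuity in the pair (strongly convergent weight, weak-$*$ convergent state). You instead exploit the a priori bound $M=\sup_n\norm{v_n}_{L^\infty(0,S;\calV)}<\infty$ from Banach--Steinhaus to absorb the difference $\delta-\delta_n^-$ directly into an error term vanishing in $n$, which eliminates the varying weight entirely; the residual step is then the weak-$*$ lower semicontinuity of the \emph{fixed-weight} functional $G$, which you establish by an explicit dual representation as a supremum of weak-$*$ continuous linear functionals on $L^\infty(0,S;\calV)$ (each test element $\varphi\in L^1(0,S;\calV^*)$ with $\norm{\varphi(s)}_{\calV^*}\le\delta(s)$ is in the predual). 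This is more elementary and self-contained than the citation of Ioffe/Valadier, and it avoids the extra truncation parameter $k$. The price you pay is that the dual representation hinges on identifying the predual of $L^\infty(0,S;\calV)$ with $L^1(0,S;\calV^*)$ and on the measurability of the selection $\varphi_w$; both are unproblematic here because $\calV$ is a separable Hilbert space (as assumed throughout the paper), but the paper's Ioffe-based argument would survive in more general Banach settings without a convenient Riesz map. One small remark: your assertion $\delta_n^-(s)\to\delta(s)$ a.e.\ implicitly uses that $\min\{\cdot,\delta(s)\}$ is continuous and nondecreasing, so that $\liminf_n\min\{\delta_n(s),\delta(s)\}=\min\{\liminf_n\delta_n(s),\delta(s)\}\ge\delta(s)$; it is worth spelling this out, but it is indeed correct.
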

\begin{proof}
 The proposition can be proved in exactly the same way as  
\cite[Lemma 3.1]{MRS09}.  
Indeed, assume first that $\delta_n\to\delta$ strongly in $L^1(0,S)$. Since for 
every fixed $\wt\delta\in L^1(0,S;[0,\infty))$ the mapping  $v\mapsto 
\int_0^S\norm{v}_\calV\wt \delta\ds$ is convex and lower  semicontinuous on 
$L^\infty(0,S;\calV)$ a generalized version of Ioffe's Theorem (see 
\cite[Theorem 21]{valadier90}) yields \eqref{app:eq20} for this case. 
For the general case fix $k>0$ and define 
$\delta_{n,k}(s):=\min\{\delta_n(s),\delta(s),k\}$. Observe that  
$\delta_{n,k}\to \delta_k:=\min\{\delta,k\}$ strongly in $L^1(0,S)$. Hence,
\begin{align*}
 \liminf_{n\to\infty} \int_0^S\norm{v_n(s)}_\calV\delta_n(s)\ds
 \geq \liminf_{n\to\infty} \int_0^S\norm{v_n(s)}_\calV\delta_{n,k}(s)\ds
 \geq \int_0^S\norm{v(s)}_\calV\delta_{k}(s)\ds
\end{align*}
by the first step. The limit $k\to\infty$ finally implies \eqref{app:eq20}. 
\end{proof}
The next lemma that we cite from 
\cite[Lemma 4.3]{MRS12VarConv} is closely related to the previous proposition:
\begin{lemma}
\label{app_prop:lsc2}
 Let $I\subset\R$ be a bounded interval and $f,g,f_n,g_n:I\to[0,\infty)$, $n\in 
\N$, measurable functions satisfying 
$
 \liminf_{n\to\infty} f_n(s)\geq f(s)$ for a.a.\ $s\in I$ and  
$g_n\rightharpoonup g$  weakly in $L^1(I)$. 
Then
\[
 \liminf_{n\to\infty}\int_I f_n(s)g_n(s)\ds\geq\int_I f(s)g(s)\ds\,.
\] 
\end{lemma}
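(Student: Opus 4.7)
The plan is to reduce to the case where $f_n$ is bounded via truncation, use the $L^\infty$--$L^1$ duality against the weakly convergent sequence $(g_n)$, and then recover the general statement by monotone convergence.

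First I would fix $M>0$ and set $f_n^M := \min\{f_n,M\}$, $f^M := \min\{f,M\}$. Since $g_n\geq 0$, we have $\int_I f_n g_n \geq \int_I f_n^M g_n$; since $f^M \nearrow f$ pointwise as $M\to\infty$ and $g\geq 0$, monotone convergence gives $\int_I f^M g \nearrow \int_I f g$. Hence it suffices to prove $\liminf_n \int_I f_n^M g_n \geq \int_I f^M g$ for each fixed $M$, and then let $M\to\infty$.

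For fixed $M$, introduce the auxiliary functions $\phi_k(s) := \inf_{n\geq k} f_n^M(s)$. These are measurable, uniformly bounded by $M$, nondecreasing in $k$, and satisfy $\phi_k \nearrow \liminf_{n\to\infty} f_n^M \geq f^M$ almost everywhere. For every $n \geq k$ we have $\phi_k \leq f_n^M$, so
\begin{equation*}
\int_I f_n^M g_n \,\d s \geq \int_I \phi_k g_n \,\d s.
\end{equation*}
Since $\phi_k \in L^\infty(I) \subset (L^1(I))^*$ and $g_n \rightharpoonup g$ weakly in $L^1(I)$, we obtain $\lim_{n\to\infty} \int_I \phi_k g_n \,\d s = \int_I \phi_k g \,\d s$, and therefore $\liminf_{n\to\infty} \int_I f_n^M g_n \,\d s \geq \int_I \phi_k g \,\d s$. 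Letting $k\to\infty$, monotone convergence (using $\phi_k \nearrow \liminf_n f_n^M$ and $g\geq 0$) yields $\liminf_{n\to\infty} \int_I f_n^M g_n \,\d s \geq \int_I (\liminf_n f_n^M) g \,\d s \geq \int_I f^M g \,\d s$, as required.

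The main technical point is that the hypothesis offers no integrability on $f_n$, so weak $L^1$ convergence of $(g_n)$ cannot be tested directly against $f_n$; the truncation $f_n \rightsquigarrow f_n^M$ combined with the inner envelopes $\phi_k \in L^\infty$ is what permits the use of the duality. Nonnegativity of $g_n$, $g$, $f_n$, $f$ is used throughout, both to justify the inequality $\int f_n g_n \geq \int f_n^M g_n$ and to apply monotone convergence in $k$ and $M$; note that the case $\int_I fg = +\infty$ is covered automatically since then $\int_I f^M g \to +\infty$.
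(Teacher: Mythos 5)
The paper does not prove this lemma; it cites it from \cite[Lemma 4.3]{MRS12VarConv}, so there is no in-paper proof to compare against. Your argument is a correct, self-contained proof. The two truncations you use are exactly what is needed: truncating $f_n$ at level $M$ makes it bounded, and taking the inner envelope $\phi_k=\inf_{n\geq k}f_n^M$ produces a fixed $L^\infty(I)$ test function that one can pair against the weakly $L^1$-convergent sequence $(g_n)$. Each monotone-convergence step is justified by the stated nonnegativity, and the potential subtlety $\liminf_n \min\{f_n,M\}\geq\min\{f,M\}$ holds because $t\mapsto\min\{t,M\}$ is continuous and nondecreasing. The $+\infty$ case is also handled correctly. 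One could streamline the argument slightly by skipping the $M$-truncation entirely and working directly with $\phi_k:=\min\{\inf_{n\geq k}f_n,k\}\in L^\infty(I)$, which already satisfies $\phi_k\leq f_n$ for $n\geq k$ and $\phi_k\nearrow\liminf_n f_n\geq f$ a.e.; this collapses the two limiting passages into one. But your two-stage version is equally valid.
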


\end{appendix}

\small
 \newcommand{\etalchar}[1]{$^{#1}$}

\end{document}